\numberwithin{equation}{section}
\def\apl#1#2#3{#1:
\xymatrix{#2\ar[r]&#3}
}
\newtheorem{theorem}{Theorem}[section]
\newtheorem{lemma}[theorem]{Lemma}
\newtheorem{corollary}[theorem]{Corollary}
\newtheorem{proposition}[theorem]{Proposition}
\newtheorem*{coro*}{Corollary}
\theoremstyle{definition}}
\theoremstyle{definition}\newtheorem{example}[theorem]{Example}}
\theoremstyle{definition}
\newtheorem{definition}[theorem]{Definition}
\newtheorem{question}[theorem]{Question}
\newtheorem{fact}[theorem]{Fact}
\theoremstyle{definition}\newtheorem{remark}[theorem]{Remark}}
\def\T{\ensuremath{\mathbb T}}
\def\R{\ensuremath{\mathbb R}}
\def\Z{\ensuremath{\mathbb Z}}
\def\C{\ensuremath{\mathbb C}}
\def\Q{\ensuremath{\mathbb Q}}
\def\N{\ensuremath{\mathbb N}}
\newcommand{\pss}[2]{\ensuremath{{\langle #1,#2\rangle}}}
\newcommand{\wh}[1]{\widehat{#1}}
\newcommand{\hn}{H_{n}}
\newcommand{\0}{\mkern 1.5 mu\pmb{0}}
\newcommand{\sss}{\mkern 1.5 mu\pmb{s}}
\newcommand{\ttt}{\mkern 1.5 mu\pmb{t}}
\newcommand{\nnn}{\mkern 1.5 mu\pmb{n}}
\newcommand{\tttheta}{\mkern 1.5 mu\pmb{\theta }}
\newcommand{\xxx}{\mkern 1.5 mu\pmb{x}}
\newcommand{\ppp}{\mkern 1.5 mu\pmb{p}}
\newcommand{\qqq}{\mkern 1.5 mu\pmb{q}}
\newcommand{\zzzero}{\mkern 1.5 mu\pmb{0}}
\newcommand{\eeeta}{\mkern 1.5 mu\pmb{\eta}}
\newcommand{\yyy}{\mkern 1.5 mu\pmb{y}}
\newcommand{\eee}{\mkern 1.5 mu\pmb{e}}
\newcommand{\aaaa}{\mkern 1.5 mu\pmb{a}}
\newcommand{\AAA}{\mathcal{A}}
\newcommand{\xn}{x_{n}}
\newcommand{\yn}{y_{n}}
\newcommand{\an}{a_{n}}
\newcommand{\gn}{n\ge 1}
\newcommand{\pxn}{(\xn)_{\gn}}
\newcommand{\pyn}{(\yn)_{\gn}}
\newcommand{\pan}{(\an)_{\gn}}
\newcommand{\oxn}{\oti_{\gn}\xn}
\newcommand{\oyn}{\oti_{\gn}\yn}
\newcommand{\oan}{\oti_{\gn}\an}
\newcommand{\inc}[1]{\bigotimes_{\gn}^{#1}\hn}
\newcommand{\ds}{\displaystyle}
\newcommand{\ba}[1]{\overline{#1}}
\newcommand{\oti}{\otimes}
\newcommand{\opl}{\mathop{\oplus}}
\newcommand{\ti}[1]{\widetilde{#1}}
\newcommand{\jnj}{j\,\in J}
\newcommand{\iij}{i\,\in I_{j}}
\newcommand{\iijj}{_{i,\,j}}
\newcommand{\PPP}{\textbf{\textsf{P}}}
\newcommand{\EE}{\textbf{\textsf{E}}}
\newcommand{\uv}{{u,\,v}}
\newcommand{\ka}{Kazhdan}
\newcommand{\kn}{K_{n}}
\newcommand{\qn}{Q_{n}}
\newcommand{\nc}{n,\,c}
\newcommand{\nw}{n,\,w}
\newcommand{\qq}{Q}
\newcommand{\en}{\varepsilon _{n}}
\newcommand{\jn}{j,\,n}
\newcommand{\ijn}{i,\,j,\,n}
\newcommand{\hh}{\mathscr{H}}
\newcommand{\uu}{\mathscr{U}}
\author{Catalin Badea}
\address{Universit\'e Lille 1, Laboratoire Paul Painlev\'e, CNRS UMR 8524, B\^at. M2,
59655 Villeneuve d'Ascq Cedex, France}
\email{catalin.badea@univ-lille1.fr}
\author{Sophie Grivaux}
\address{CNRS,
Laboratoire Ami\'enois de Math\'{e}matique Fondamentale et Appliqu\'{e}e, UMR 7352,
Universit\'{e} de Picardie Jules Verne,
33 rue Saint-Leu,
80039 Amiens Cedex 1,
France}
\email{sophie.grivaux@u-picardie.fr}
\title[Kazhdan sets and equidistribution properties]{Kazhdan sets in groups and equidistribution properties}
\begin{document}
\begin{abstract}
 Using functional and harmonic analysis methods, we study \ka\ sets in topological groups which do not necessarily have 
Property (T). We provide a new criterion for a generating subset $\qq$ of 
a group $G$ to be a \ka\ set; it relies on the 
existence of a positive number $\varepsilon$ such that every unitary representation of $G$ with a 
$(\qq,\varepsilon )$-invariant vector has a finite dimensional 
subrepresentation. Using this result, we give an equidistribution 
criterion for a generating subset of $G$ to be a \ka\ set. In the 
case 
where $G=\Z$, this shows that if $(n_{k})_{k\ge 1}$ is a sequence of integers such that $(e^{2i\pi \theta 
n_{k}})_{k\ge 1}$ is uniformly distributed in the unit circle for all 
real numbers $\theta $ except at most countably many, then
$\{n_{k}\,;\,k\ge 1\}$ is a \ka\ set in $\Z$ as soon as it generates $\Z$. 
This answers a question of Y.\ Shalom from 
[B.~Bekka, P.~de la~Harpe, A.~Valette, Kazhdan's property (T), 
Cambridge Univ. Press, 2008]. 
We also obtain characterizations of \ka\ sets in second countable locally compact abelian groups, in the Heisenberg groups and in the group $\textrm{Aff}_{+}(\R)$.
This answers in particular a question from 
[B.~Bekka, P.~de la~Harpe, A.~Valette, Kazhdan's property (T), 
op. cit.].
\end{abstract}
\subjclass{22D10, 22D40, 37A15, 11K069, 43A07, 46M05}
\keywords{Kazhdan sets, topological groups, Property (T), 
equidistributed sequences in groups, tensor products of unitary 
representations, weakly mixing representations,  
abstract Wiener theorem, Heisenberg groups.}
\thanks{We would like to thank an anonymous referee for very useful remarks and comments; the present version has been completely rewritten after receiving his/her report. This work was supported in part by the Labex CEMPI (ANR-11-LABX-0007-01) and by the EU IRSES grant AOS (PIRSES-GA-2012-318910).}
\maketitle

\section{Introduction}\label{Section 0}
A \emph{unitary representation} of a topological group $G$ on a Hilbert space $H$ is a group 
morphism 
from 
$G$ into the group $\uu(H)$ of all unitary operators on $H$ which 
is strongly continuous, i.\,e.\ such that the map
$\smash{\xymatrix{g\ar@{|->}[r]&\pi (g)x }}$ is continuous from $G$ into 
$H$ for all vectors $x\in H$. As all the representations  we consider 
in this paper are unitary, we will often drop the word ``unitary'' and 
speak simply of representations of a group $G$ on a Hilbert space $H$. In this paper the Hilbert spaces will always be supposed to be complex, 
and endowed with an inner product $\pss{\,\cdot\,}{\cdot\,}$ 
which is linear in the first variable and antilinear in the second 
variable.
\begin{definition}\label{Definition 1}
 Let $\qq$ be a subset of a topological group $G$, $\varepsilon $ a 
positive 
real number, and $\pi$ a unitary 
representation of $G$ on a Hilbert space $H$. A vector $x\in H$ is said to be 
\emph{$(Q,\varepsilon )$-invariant for $\pi $} if 
\[
\sup_{g\,\in \qq} ||\pi (g)x -x||<\varepsilon ||x ||.
\]
A $(Q,\varepsilon )$-invariant vector for $\pi $ is in particular non-zero.
A \emph{$G$-invariant vector} for $\pi$ is a vector $x\in H$ such that $\pi (g)x=x
$ for all $g\in G$.
\end{definition}
The notions of \ka\ sets and \ka\ pairs will be fundamental in our work.
\begin{definition}\label{Definition 2}
 A subset $\qq$ of a topological group $G$ is a \emph{\ka\ set in $G$} if 
there exists $\varepsilon >0$ such that the following property holds true:
any unitary representation $\pi $ of $G$ on a complex Hilbert space $H$ 
with a 
$(\qq,\varepsilon )$-invariant vector has a non-zero $G$-invariant vector.
\noindent
In this case, the pair $(\qq,\varepsilon )$ is \emph{\ka\ pair}, and 
$\varepsilon $ is a \emph{\ka\ constant for $\qq$}. A group $G$ has \emph{Property} (T), or is a \emph{\ka\ group}, if it 
admits a compact \ka\ set. 
\end{definition}
Property (T)  is a rigidity property of topological groups which has been
introduced by \ka\ in \cite{K} for locally compact groups, and which has 
spectacular applications to many fields. For instance, the groups $SL_{n}(\R)$ and $SL_{n}(\Z)$ have Property (T) if and only if $n\ge 3$. We refer the reader to the monograph \cite{BdHV} by Bekka, de la Harpe, and Valette 
for a comprehensive presentation of Kazhdan's Property (T) and its applications (see also \cite{HarpeVal}).

The aim of this paper is to identify and study Kazhdan sets in topological groups. For discrete groups with Property (T) the Kazhdan sets are known. Recall first the following definition.
\begin{definition}\label{Definition 3}
If $\qq$ is a subset of a group $G$, we denote by 
$\langle\qq\rangle$ the smallest subgroup of $G$ containing $\qq$, i.\,e.\ 
the set of all elements of the form $g_{1}^{\,\pm 1}\dots g_{n}^{\,\pm 
1}$, where $\gn$ and $g_{1},\dots,g_{n}$ belong to $\qq$. We say that 
\emph{$\qq$ generates $G$}, or \emph{is generating in $G$}, if $\langle\qq\rangle=G$. 
\end{definition}
Locally compact groups with Property (T) are compactly generated. In particular, discrete groups with Property (T) are finitely generated 
and it is known (see \cite[Prop.~1.3.2]{BdHV}) that the \ka\ 
 subsets of a discrete group with Property (T) are exactly the  generating subsets of the group. 
More generally \cite[Prop.~1.3.2]{BdHV}, a generating set of a locally compact group which has Property (T) is a Kazhdan set and, conversely, a Kazhdan set which has non-empty interior is necessarily a generating set. 
\par\smallskip
For groups without Property (T) the results about Kazhdan sets and Kazhdan pairs are very sparse. It is known (see \cite[Prop.~1.1.5]{BdHV}) that $(G,\sqrt{2})$ is a \ka\ pair for every topological group $G$, so $G$ is always a (``large'') \ka\ subset of itself.  
The main motivations for the present paper are two questions from \cite[Sec.~7.12]{BdHV}. The first one is due to Y.~Shalom: 

\begin{question}\label{Question 0}\cite[Sec.~7.12]{BdHV}
 ``The question of knowing if a subset $\qq$ of $\Z$ is a \ka\ set is 
possibly related to the equidistribution of the sequence $(e^{2i\pi 
n\theta 
})_{n\,\in Q}$ for $\theta $ irrational, in the sense of Weyl.''
\end{question}

We refer the reader to the classical book \cite{KuiNied} by Kuipers and Niederreiter for more information about  equidistributed (sometimes called uniformly distributed) sequences. Recall that the Weyl Criterion (\cite[Th. 2.1]{KuiNied}) states that if $(x_k)_{k\ge 1}$ is a sequence of real numbers, $(e^{2i\pi x_k})_{k\ge 1}$ is equidistributed in $\T$ if and only if $\frac{1}{N}\sum_{k=1}^N e^{2i\pi h x_k}$ tends to $0$ as $N$ tends to infinity for every non-zero integer $h$. Hence if $(n_k)_{k\ge 1}$ is a sequence of elements of $\Z$, $(e^{2i\pi n_k\theta})_{k\ge 1}$ is equidistributed in $\T$ for every  $\theta\in\R\setminus\Q$ if and only if $\frac{1}{N}\sum_{k=1}^N e^{2i\pi n_k\theta}$ tends to $0$ as $N$ tends to infinity for every $\theta\in\R\setminus\Q$. If $\chi_{\theta}$ denotes, for every $\theta\in\R$, the character on $\Z$ associated to $\theta$, this means that $\frac{1}{N}\sum_{k=1}^N \chi_{\theta}(n_k)$ tends to $0$ as $N$ tends to infinity for every $\theta\in\R\setminus\Q$.
\par\medskip
The first remark about Question \ref{Question 0} is that it concerns Kazhdan \emph{sets} and  equidistributed \emph{sequences}; notice that a rearrangement of the terms of a sequence can destroy its equidistribution properties. 
It is known \cite[p. 135]{KuiNied} that given a sequence of elements of the unit circle $\T$, there exists a certain rearrangement of the terms which is is equidistributed if and only if the original sequence is dense in $\T$. 
The second remark is that, as mentioned before, \ka\ sets of $\Z$ are necessarily generating, while there are non-generating subsets $Q$ of $\Z$, like $Q =  p\Z$ with $p\ge 2$, for which the sequences $(e^{2i\pi pk\theta 
})_{k\,\in \Z}$ are equidistributed for all irrational $\theta$'s. 
So Question 
\ref{Question 0} may be rephrased as follows:
\begin{question}\label{Question 1}
(a) Let $\qq$ be a Kazhdan subset of $\Z$. Does a certain rearrangement $(n_k)_{k\,\ge 1}$ of the elements of $Q$ exist such that $(e^{2i\pi n_{k}\theta })_{k\,\ge 1}$ is equidistributed in 
$\T$ for every $\theta\in\R\setminus\Q$? Equivalently, is the sequence $(e^{2i\pi n\theta })_{n\,\in Q}$ dense in $\T$ for every $\theta\in\R\setminus\Q$?

 (b) Let $\qq=\{n_{k}\,;\,k\ge 1\}$  be a generating subset of $\Z$. Suppose that the 
sequence $(e^{2i\pi n_{k}\theta })_{k\,\ge 1}$ is equidistributed in 
$\T$ for every $\theta\in\R\setminus\Q$. Is $\qq$ a \ka\ set in $\Z$?
\end{question}

We will prove in this paper that Question \ref{Question 1}~(a) has a negative answer, a counterexample being provided by the set $Q = \{2^{k}+k\,;\,k\ge 1\}$ (see Example \ref{Example B}). 
On the other hand, one of the aims of this paper is to show that Question \ref{Question 1}~(b) has a positive answer. Actually, we will consider Question \ref{Question 1}~(b) in the more general framework of Moore groups, and answer it in the affirmative (Theorem \ref{Theorem 3}).
\par\smallskip

The second question of \cite[Sec.~7.12]{BdHV} runs as follows:

\begin{question}\label{Question 00}\cite[Sec.~7.12]{BdHV}
``More generally, what are the Kazhdan subsets 
of  $\Z^k$, $\R^k$, the Heisenberg group, or other infinite amenable groups?''
\end{question}

We shall answer
Question \ref{Question 00} in Section \ref{Section 7} by
giving a complete description of Kazhdan sets in many classic groups which do not have Property (T), including the groups $\Z^k$ and $\R^k$, $k\ge 1$, the Heisenberg groups of all dimensions, and the group $\textrm{Aff}_{+}(\R)$ of orientation-preserving affine homeomorphisms of $\R$.

\section{Main results}\label{Sec2}
Let us now describe our main results in more detail.
\subsection{Equidistributed sets in Moore groups}
In order to state Question \ref{Question 1}~(b) for more general groups, we first need to define equidistributed sequences. There are several possible ways of doing this.
If $(g_k)_{k\ge 1}$ is a sequence of elements in a locally compact group $G$, uniform distribution of $(g_k)_{k\ge 1}$ in any of these senses requires a certain form of convergence, as $N$ tends to infinity, of the means 
\begin{equation}\label{means}
\dfrac{1}{N}\ds\sum_{k=1}^{N}\pi(g_{k})
\end{equation}
to the orthogonal projection $P_{\pi}$ on the subspace of invariant vectors for $\pi$, for a certain class of unitary representations $\pi$ of $G$.
Veech \cite{V1}, \cite{V2} calls $(g_k)_{k\ge 1}$ uniformly distributed in $G$ if the convergence of the means (\ref{means}) holds in the weak operator topology for all unitary representations of $G$ (or, equivalently, for all irreducible unitary representations of $G$, provided $G$ is supposed to be second countable). Unitary uniform distribution in the sense of Losert and Rindler \cite{LR}, \cite{GLR} requires the convergence in the strong operator topology of the means (\ref{means}) for all irreducible unitary representations of the group, while Hartman uniform distribution only requires convergence in the strong operator topology for all finite dimensional unitary representations. 
\par\smallskip
In this paper we deal with the following natural extension to general locally compact groups $G$ of
the equidistribution condition of
 Question \ref{Question 1}~(b):
 if $(g_{k})_{k\ge 1}$ is a sequence of elements of $G$, we require the sequence of means (\ref{means})
to converge to $0$ in the weak topology for all finite dimensional irreducible unitary representations of $G$ except those belonging at most countably many equivalence classes of irreducible representations.
In the case of the group $\Z$, sequences $(n_{k})_{k\ge 1}$ of integers such that $(e^{2i\pi\theta n_k})_{k\ge 1}$ is uniformly distributed in $\T$ for all $\theta\in\R$ except countably many are
said to be of first kind (see for instance \cite{Ha2}). 
The class of groups we will consider in relation to Question \ref{Question 1}~(b) is the class of  second countable Moore groups. Recall that $G$ is said to be a \emph{Moore group} if all irreducible representations of $G$ are finite dimensional. Locally compact Moore groups are completely described in \cite{Mo}: a Lie group is a Moore group if and only 
if it has a closed subgroup $H$ such that $H$ modulo its center is compact, 
and a locally compact group is a Moore group if and only if it is a projective 
limit of Lie groups which are Moore groups. See also the survey \cite{Pal} 
for more information concerning the links between various properties of topological 
groups, among them the property of being a Moore group. Of course 
all locally compact abelian groups are Moore groups.
\par\smallskip
Here is the first main result of this paper.

\begin{theorem}\label{Theorem 3}
 Let $G$ be a second countable locally compact Moore group. 
 Let
$(g_k)_{k\ge1}$ be a sequence of elements of $G$. Suppose that  $(g_{k})_{k\ge 1 } $ satisfies the following equidistribution
assumption:
\begin{equation}\label{Pro3}
 \begin{minipage}{130 mm}
{for all (finite dimensional) irreducible unitary
representations $\pi $ of $G$ on a Hilbert space $H$, except those belonging to at most countably many equivalence classes,}
\[
\newsavebox{\abbaa}
\savebox{\abbaa}{\smash[b]{\xymatrix@C=13pt{\scriptstyle 
N\ar[r]&\scriptstyle+\infty}}}
\xymatrix@C=50pt{\dfrac{1}{N}\ds\sum_{k=1}^{N}\pss{\pi (g_{k})x }{y }\ar[r]_-{\usebox{\abbaa}}&0}
\quad\textrm{for every}\ x,y\in H.
\]
 \end{minipage}
\end{equation}

-- If $Q= \{g_k \,;\,
k\ge 1\}$ generates $G$ (in which case $G$ has to be countable), then $\qq$ is a \ka\ set in $G$.

-- If $\qq$ is not assumed to generate $G$, $\qq$ becomes a \ka\ set when one adds to it a suitable ``small'' perturbation. More precisely, if $(W_{n})_{\gn}$ is an increasing sequence of subsets of $G$ such that $\bigcup_{\gn}W_{n}=G$, there exists $n\ge 1$ such that $W_{n}\cup\qq$ is a \ka\ set in $G$.
\end{theorem}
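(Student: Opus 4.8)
The plan is to deduce both assertions from the criterion recalled above: a generating subset $Q$ of $G$ is a \ka\ set as soon as there is some $\varepsilon_0>0$ such that every unitary representation of $G$ with a $(Q,\varepsilon_0)$-invariant vector has a finite dimensional subrepresentation. Thus the first, and central, point is to prove such an $\varepsilon_0$ exists, and the key observation is that the natural proof of this needs no generation hypothesis at all, so it serves both parts of the theorem.

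\emph{Step 1: the finite dimensional subrepresentation property.} I claim any $\varepsilon_0\in(0,\sqrt2)$ works. Let $\pi$ be a representation with a $(Q,\varepsilon_0)$-invariant unit vector $x$, and suppose towards a contradiction that $\pi$ has no finite dimensional subrepresentation. Since $G$ is a second countable Moore group it is of type I and all its irreducible representations are finite dimensional, so $\pi$ has a central decomposition $\pi\cong\int^{\oplus}_{\widehat G}\xi\otimes\mathrm{id}_{M_\xi}\,d\mu(\xi)$, and having no finite dimensional subrepresentation means exactly that $\mu$ has no atom; hence $\mu$ gives mass $0$ to the \emph{countable} set $\mathcal E\subset\widehat G$ of exceptional equivalence classes in assumption \eqref{Pro3}. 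By dominated convergence (the Ces\`aro means have norm $\le1$, and for $\mu$-a.e.\ $\xi$, that is for $\xi\notin\mathcal E$, the means $\frac1N\sum_{k=1}^N\pss{\xi(g_k)u}{v}$ tend to $0$), the means $\frac1N\sum_{k=1}^N\pi(g_k)$ tend to $0$ in the weak operator topology; in particular $\frac1N\sum_{k=1}^N\pss{\pi(g_k)x}{x}\to0$. But $(Q,\varepsilon_0)$-invariance gives $\mathrm{Re}\,\pss{\pi(g_k)x}{x}>1-\varepsilon_0^2/2$ for all $k$, so those Ces\`aro means stay $\ge 1-\varepsilon_0^2/2>0$, a contradiction. (Equivalently one may argue through weak mixing and tensor products: $\pi$ without a finite dimensional subrepresentation is weakly mixing, and then so is $\pi\otimes\overline\pi$—a finite dimensional subrepresentation of $\pi\otimes\overline\pi$ would give a non-zero trace class self-intertwiner of $\pi$, hence a finite dimensional subrepresentation of $\pi$—so one runs the same argument with $\pi\otimes\overline\pi$ and the $(Q,2\varepsilon_0)$-invariant vector $x\otimes\overline x$.) Feeding $\varepsilon_0$ into the criterion proves the first bullet, noting that $Q=\{g_k\mid k\ge1\}$ is countable, so its generating $G$ forces $G$ to be countable.

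\emph{Step 2: the perturbation statement.} The computation of Step 1 also gives the quantitative refinement that for \emph{any} representation $\pi$ and any $(Q,\delta)$-invariant vector $x$ with $0<\delta<\sqrt2$, the distance from $x$ to the closed $G$-invariant subspace $H^{\mathrm{fin}}(\pi)$ spanned by the finite dimensional subrepresentations of $\pi$ is at most $\tfrac{\delta}{\sqrt2}\|x\|$ (project $x$ onto $H^{\mathrm{fin}}(\pi)^{\perp}$, which is weakly mixing and therefore, by the disintegration argument, carries no $(Q,\delta')$-invariant vector with $\delta'<\sqrt2$). Suppose, for a contradiction, that $W_n\cup Q$ is not a \ka\ set for any $n$; pick, for each $n$, a representation $\pi_n$ with a $(W_n\cup Q,2^{-n})$-invariant unit vector $x_n$ and no non-zero $G$-invariant vector. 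Projecting $x_n$ onto $H^{\mathrm{fin}}(\pi_n)$, renormalising, and replacing $\pi_n$ by its restriction there, we may assume that $\pi_n$ is a direct sum of non-trivial finite dimensional irreducible representations of $G$ and that $x_n$ is a unit vector which is $(W_n\cup Q,\varepsilon_n)$-invariant with $\varepsilon_n\to0$. Put $\pi:=\bigoplus_n\pi_n$ and decompose it into isotypic components $\pi=\bigoplus_m\tau_m\otimes M_m$; each $\tau_m$ factors through $\Delta\colon G\to\prod_m\overline{\tau_m(G)}$, so $\pi$ extends to a representation $\widetilde\pi$ of the compact group $\Lambda:=\overline{\Delta(G)}$, still with no non-zero invariant vector, and hence $\sup_{\lambda\in\Lambda}\|\widetilde\pi(\lambda)v-v\|\ge\sqrt2\,\|v\|$ for all $v$. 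Since for each individual $g$ the $\tau_m$-component of $\pi(g)x_n-x_n$ has norm at most $\|\pi(g)x_n-x_n\|$, the component of $x_n$ on the $\tau_m$-isotypic subspace is $(W_n\cup Q,\varepsilon_n)$-invariant for $\tau_m\otimes M_m$; as $\tau_m(W_n)$ approximates the compact group $\overline{\tau_m(G)}$ more and more finely when $n\to\infty$ (because $\bigcup_nW_n=G$) and $\tau_m\otimes M_m$ has no invariant vector there, the norm of that component tends to $0$ as $n\to\infty$ for each fixed $m$. One then has to rule out an escape of the mass of $x_n$ to isotypic components of higher and higher index, and concludes via a Baire category argument in the compact group $\Lambda$, using once more the countability of $\mathcal E$ (only countably many $\tau_m$ occur) together with the freedom to take $\varepsilon_n$ as small as we wish.

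\emph{The main obstacle.} Step 1 is essentially formal once the disintegration into irreducibles (equivalently, the weakly-mixing/tensor-product mechanism) is in place; the real work is the last part of Step 2. When infinitely many exceptional irreducibles occur the extended representation $\widetilde\pi$ need not be norm continuous on $\Lambda$, so one cannot simply invoke density of $\Delta(W_n\cup Q)$ in $\Lambda$ to forbid $(W_n\cup Q,\varepsilon_n)$-invariant vectors. The argument therefore has to be quantitative and has to control, uniformly over the (possibly infinitely many) exceptional finite dimensional constituents, how the almost invariant vectors $x_n$ can spread out as the exhausting sets $W_n$ grow — this is where the equidistribution hypothesis, via the placement of the spectral mass on $\mathcal E$, is really needed a second time.
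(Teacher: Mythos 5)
Your Step 1 is correct and is essentially the paper's own argument: decompose $\pi$ as a direct integral of (finite dimensional) irreducibles, observe that the absence of a finite dimensional subrepresentation forces the spectral measure to give zero mass to the countable exceptional family $\mathcal{E}$, and contradict the lower bound $\mathrm{Re}\,\pss{\pi(g_k)x}{x}>1-\varepsilon_0^2/2$ by dominated convergence. (The paper uses $\varepsilon_0=1/2$ and reduces to a separable $H$ via second countability; your constant $\varepsilon_0<\sqrt2$ is equally fine.) This verifies condition (*) of Theorem \ref{Theorem 0}, and the first bullet then follows from Theorem \ref{Theorem 2} exactly as you say.

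The gap is in Step 2. You did not notice that the criterion you invoke is already stated in perturbation form: Theorem \ref{Theorem 0} asserts that for a locally compact $G$, condition (*) alone --- with no generation hypothesis --- implies that $W_n\cup Q$ is a \ka\ set for some $n$, for \emph{any} increasing exhaustion $(W_n)_{n\ge 1}$ of $G$. Since your Step 1 establishes (*) without using generation, the second bullet is immediate and no further work is needed; this is precisely how the paper argues. Instead you attempt to re-prove this special case of Theorem \ref{Theorem 0} directly, and your argument stalls exactly where you say it does: once you have reduced to representations $\pi_n$ that are direct sums of non-trivial finite dimensional irreducibles with $(W_n\cup Q,\varepsilon_n)$-invariant unit vectors $x_n$, you must prevent the mass of $x_n$ from escaping to isotypic components of higher and higher index, and your appeal to a Baire category argument in the compactification $\Lambda$ is not carried out (indeed, as you note yourself, $\widetilde\pi$ need not be continuous on $\Lambda$, so density of $\Delta(W_n\cup Q)$ buys nothing). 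This is a genuine missing step, not a routine verification: it is the hard core of the paper's proof of Theorem \ref{Theorem 0} in Section \ref{Section 5}, where it is resolved by an entirely different mechanism --- either one extracts infinitely many pairwise inequivalent constituents $\sigma_n$ and forms infinite tensor products of the blocks $\tilde\sigma_n\oplus\dots\oplus\tilde\sigma_{2n}$, whose weak mixing is certified by the abstract Wiener-type inequality of Corollary \ref{Corollary 5.7}, or essentially one constituent repeats, which is excluded by producing a non-zero invariant vector for it. As written, your proposal proves the first bullet but not the second.
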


The equidistribution property (\ref{Pro3}) of the sequence
$(g_{k})_{k\ge 1 }$ takes a more 
familiar form when the group $G$ is supposed to be abelian: it is equivalent to requiring that condition \eqref{(o)} below holds true
for all characters $\chi $ of the group except possibly countably many.

\begin{theorem}\label{Theorem 1}
 Let $G$ be a locally compact abelian group, and let $(g_{k
})_{k\ge 1}$ be  a sequence of elements of $G$. Suppose that 
 \newsavebox{\abba}
\savebox{\abba}{\smash[b]{\xymatrix@C=13pt{\scriptstyle 
N\ar[r]&\scriptstyle+\infty}}}
 \begin{equation}\label{(o)}
 \xymatrix@C=50pt{\dfrac{1}{N}\ds\sum_{k=1}^{N}\chi (g_{k})\ar[r]_-{\usebox{\abba}}&0}
 \end{equation}
for all characters $\chi $ on $G$, except at most countably many. 
If $Q= \{g_k \,;\,
k\ge 1\}$ generates $G$, then $\qq$ is a \ka\ set in $G$.
If $\qq$ is not assumed to generate $G$, and if $(W_{n})_{\gn}$ is an increasing sequence of subsets of $G$ such that $\bigcup_{\gn}W_{n}=G$, then there exists $n\ge 1$ such that $W_{n}\cup\qq$ is a \ka\ set in $G$.
\end{theorem}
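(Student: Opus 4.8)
The plan is to deduce Theorem \ref{Theorem 1} directly from Theorem \ref{Theorem 3}. A locally compact abelian group $G$ is a Moore group, and its irreducible unitary representations are precisely its (continuous, one-dimensional) characters, pairwise inequivalent when distinct; thus the equivalence classes of irreducible representations of $G$ are in bijection with the elements of the Pontryagin dual $\wh{G}$. Under this identification, specializing $H=\C$ and $x=y=1$, the equidistribution hypothesis \eqref{Pro3} of Theorem \ref{Theorem 3} becomes word for word condition \eqref{(o)}: $\tfrac1N\sum_{k=1}^{N}\chi(g_{k})\to 0$ for all $\chi\in\wh{G}$ outside an at most countable set. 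Hence both conclusions of Theorem \ref{Theorem 1} — the generating case, and the perturbation statement along an increasing exhausting sequence $(W_{n})_{\gn}$ — are immediate specializations of Theorem \ref{Theorem 3}, and the parenthetical remark that $G$ must be countable when $\qq$ generates $G$ is clear since $\qq=\{g_{k}\,;\,k\ge 1\}$ is countable.

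I would also record the following self-contained route, which amounts to recovering, in the abelian case, the mechanism underlying Theorem \ref{Theorem 3}. Fix $\varepsilon\in(0,\sqrt 2)$ and let $\pi$ be a unitary representation of $G$ on $H$ with a $(\qq,\varepsilon)$-invariant vector $x$. By the SNAG theorem there is a projection-valued measure $E$ on $\wh{G}$ with $\pi(g)=\int_{\wh{G}}\chi(g)\,dE(\chi)$; set $\mu_{x}=\pss{E(\cdot)x}{x}$, a positive measure of total mass $||x||^{2}$. Since $||\pi(g)x-x||^{2}=\int_{\wh{G}}|\chi(g)-1|^{2}\,d\mu_{x}(\chi)$ and $|\chi(g)-1|^{2}=2-2\,\textrm{Re}\,\chi(g)$, averaging the inequality $||\pi(g_{k})x-x||^{2}<\varepsilon^{2}||x||^{2}$ over $k=1,\dots,N$ gives
\[
\int_{\wh{G}}\Bigl(2-2\,\textrm{Re}\,\dfrac1N\sum_{k=1}^{N}\chi(g_{k})\Bigr)\,d\mu_{x}(\chi)<\varepsilon^{2}||x||^{2}.
\]
Letting $N\to\infty$ and applying dominated convergence on the set where \eqref{(o)} holds — the integrand is nonnegative, bounded by $4$, and tends there to $2$ — yields $2\,\mu_{x}(\wh{G}\setminus\mathcal C)\le\varepsilon^{2}||x||^{2}$, where $\mathcal C$ denotes the exceptional countable set. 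Hence $\mu_{x}(\mathcal C)\ge(1-\varepsilon^{2}/2)||x||^{2}>0$, so $\mathcal C$ carries an atom $\chi_{0}$ of $\mu_{x}$, i.e. $E(\{\chi_{0}\})\ne 0$, and $\pi$ has a one-dimensional subrepresentation, on which $G$ acts by $\chi_{0}$.

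It then remains to pass from the statement ``every unitary representation of $G$ with a $(\qq,\varepsilon)$-invariant vector has a finite-dimensional subrepresentation'' to ``$\qq$ is a \ka\ set'' — which is exactly the criterion on which Theorem \ref{Theorem 3} rests, and which is where the generating hypothesis (respectively, the perturbation by a suitable $W_{n}$) enters: the datum of the atom $\chi_{0}$ only localizes $\mu_{x}$ near a character that need not be close to the trivial one on $\qq$, and one must still produce a genuine $G$-invariant vector. I expect this last step to be the real obstacle: the spectral computation above is routine, but turning a finite-dimensional subrepresentation into a $G$-invariant vector calls for a tensor-product / weak-mixing argument together with the use of generation (so that a character trivial on $\qq$ is trivial on $G$), and, in the non-generating case, a compactness argument along the exhaustion $(W_{n})_{\gn}$. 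Since Theorem \ref{Theorem 3} already packages precisely this, the cleanest write-up is simply to invoke it as in the first paragraph.
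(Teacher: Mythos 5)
Your second, self-contained route is in substance the paper's proof. The paper disposes of Theorem \ref{Theorem 1} in one sentence: the argument is ``exactly the same as that of Theorem \ref{Theorem 3}'', except that one replaces the direct-integral decomposition of representations of second countable groups by the fact that \emph{every} unitary representation of a locally compact abelian group is a direct integral of characters (\cite[Th.~7.36]{Fo}); one then verifies hypothesis (\ref{Pro1}) and concludes via Theorem \ref{Theorem 2} (generating case) or Theorem \ref{Theorem 0} (perturbation case). Your SNAG/spectral-measure computation is exactly this verification of (\ref{Pro1}) in different notation: the averaging, the domination by $4$, the conclusion $\mu_{x}(\mathcal C)\ge(1-\varepsilon^{2}/2)\|x\|^{2}>0$ and the extraction of an atom are all correct, and they reproduce the paper's step ``$\mu(Z_{0})>0$, hence some $\pi_{0}$ with $[\pi_{0}]\in\mathcal C_{0}$ is a subrepresentation''. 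You also correctly identify that the remaining work — passing from (\ref{Pro1}) to the Kazhdan property — is exactly what Theorems \ref{Theorem 0} and \ref{Theorem 2} supply, and that this is where generation, respectively the exhaustion $(W_{n})_{\gn}$, enters.

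The one genuine defect is in your first paragraph and in your closing recommendation to ``simply invoke Theorem \ref{Theorem 3}''. Theorem \ref{Theorem 3} carries the hypothesis that $G$ is \emph{second countable}, while Theorem \ref{Theorem 1} does not; the paper states explicitly that Theorem \ref{Theorem 1} is a particular case of Theorem \ref{Theorem 3} ``except for the fact that there is no need to suppose that the group is second countable when it is known to be abelian''. For the generating statement the gap is harmless (a locally compact group generated by a countable set is countable, hence discrete by Baire category, hence second countable), but the perturbation statement about $W_{n}\cup\qq$ makes no countability assumption on $G$, so it is not an ``immediate specialization'' of Theorem \ref{Theorem 3}. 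The repair is precisely your own second paragraph: the SNAG theorem (equivalently, the direct-integral decomposition of \cite[Th.~7.36]{Fo}) holds for arbitrary locally compact abelian groups, so (\ref{Pro1}) is established in full generality, and Theorems \ref{Theorem 0} and \ref{Theorem 2} — which require only local compactness — finish the proof. So the correct write-up is your second route followed by an appeal to Theorem \ref{Theorem 0}/\ref{Theorem 2}, not a citation of Theorem \ref{Theorem 3}.
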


Theorem \ref{Theorem 1} can thus be seen as a particular case of Theorem \ref{Theorem 
3}, except for the fact that there is no need to suppose that the group is 
second countable when it is known to be abelian. The case $G=\Z$ provides a positive answer to Question \ref{Question 1}~(b) above.

\subsection{Kazhdan sets and finite dimensional subrepresentations}
The proof of Theorem \ref{Theorem 1} relies on Theorem \ref{Theorem 0} below, which gives a new  
condition for a ``small perturbation'' of a subset $\qq$ of a 
group $G$ to be a \ka\ set in $G$. Theorem \ref{Theorem 0} constitutes the core of the paper, and  has, besides the proofs of Theorems \ref{Theorem 3} and \ref{Theorem 1}, 
several interesting applications
which we will present in Sections \ref{Section 6} and \ref{Section 7}.

\begin{theorem}\label{Theorem 0}
 Let $G$ be a topological group, and let $(W_{n})_{\gn}$ be an increasing sequence of 
subsets of $G$ such that $W_{1}$ is a 
neighborhood of the unit element $e$ of $G$ and $\bigcup_{\gn}W_{n}=G$.
Let $\qq$ be a subset of $G$ satisfying the following assumption:
\begin{equation}\label{Pro1}
 \tag{*}\quad \begin{minipage}{120 mm}
there exists a positive constant $\varepsilon $ such that every unitary 
representation $\pi $ of $G$ on a Hilbert space $H$ admitting a 
$(\qq,\varepsilon )$-invariant vector has a finite dimensional 
subrepresentation.
\end{minipage}
\end{equation}
\par\smallskip 
\noindent Then there exists an integer $\gn$ such that $\qq_{n}=
W_{n}\cup\qq$ is a \ka\ set in $G$.
\par\smallskip 
If the group $G$ is locally compact, the same statement holds true for any increasing
sequence $(W_{n})_{\gn}$ of subsets of $G$ such that 
$\bigcup_{\gn}W_{n}=G$.
\end{theorem}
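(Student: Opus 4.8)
The plan is to argue by contradiction. Suppose that $\qn=W_{n}\cup\qq$ fails to be a \ka\ set for every $\gn$. Since failure of the \ka\ property for $\qn$ means that for every $\delta>0$ there is a unitary representation of $G$ with a $(\qn,\delta)$-invariant vector but no non-zero $G$-invariant vector, I would choose for each $\gn$ a representation $\pi_{n}$ of $G$ on a Hilbert space $H_{n}$, together with a unit vector $x_{n}\in H_{n}$ which is $(\qn,\varepsilon_{n})$-invariant for $\pi_{n}$, where $(\varepsilon_{n})_{\gn}$ is a sequence of positive reals with $\varepsilon_{n}\to 0$ and $\varepsilon_{n}\le\varepsilon$ for every $n$ ($\varepsilon$ being the constant furnished by assumption \eqref{Pro1}), and such that $\pi_{n}$ has no non-zero $G$-invariant vector. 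Forming the direct sum $\pi=\bigoplus_{\gn}\pi_{n}$ on $H=\bigoplus_{\gn}H_{n}$, one obtains a representation of $G$ which has no non-zero $G$-invariant vector but which nonetheless admits, for every $\delta>0$, a $(\qq,\delta)$-invariant vector (namely $x_{n}$ for $n$ large enough that $\varepsilon_{n}<\delta$); in particular it has a $(\qq,\varepsilon)$-invariant vector.

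By assumption \eqref{Pro1}, $\pi$ then has a finite dimensional subrepresentation. Let $H_{f}$ be the closed linear span in $H$ of all finite dimensional $\pi$-invariant subspaces: it is non-zero, $\pi$-invariant, $\pi|_{H_{f}}$ decomposes as a Hilbert sum of finite dimensional irreducible subrepresentations, and $\pi|_{H_{f}^{\perp}}$ has no finite dimensional subrepresentation. Applying \eqref{Pro1} to $\pi|_{H_{f}^{\perp}}$ (which, having no finite dimensional subrepresentation, cannot have a $(\qq,\varepsilon)$-invariant vector), one gets $\sup_{g\in\qq}||\pi(g)v-v||\ge\varepsilon||v||$ for every $v\in H_{f}^{\perp}$. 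Writing $x_{n}=x_{n}'+x_{n}''$ with $x_{n}'\in H_{f}$, $x_{n}''\in H_{f}^{\perp}$, and using that $H_{f}$ and $H_{f}^{\perp}$ transform independently under $\pi$, one gets $\varepsilon\,||x_{n}''||\le\sup_{g\in\qq}||\pi(g)x_{n}''-x_{n}''||\le\sup_{g\in\qn}||\pi(g)x_{n}-x_{n}||<\varepsilon_{n}$, so $||x_{n}''||\to 0$, $||x_{n}'||\to 1$, and $x_{n}'$ is a non-zero vector of $H_{f}$ which is $(\qn,\varepsilon_{n}/||x_{n}'||)$-invariant for $\pi|_{H_{f}}$, with $\varepsilon_{n}/||x_{n}'||\to 0$.

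The heart of the matter is then to reach a contradiction from the following reduced situation: $\rho:=\pi|_{H_{f}}$ is a Hilbert sum of finite dimensional irreducible representations, none of which is trivial (since $\rho$ has no non-zero $G$-invariant vector), yet $\rho$ carries vectors that are $(\qn,\delta_{n})$-invariant with $\delta_{n}\to 0$ and with $(W_{n})_{\gn}$ exhausting $G$. Approximating $x_{n}'$ by a vector supported on finitely many irreducible summands, one even reduces to the case where each $\pi_{n}$ is itself finite dimensional and has no non-zero $G$-invariant vector. To conclude, I would use the exhaustion $\bigcup_{\gn}W_{n}=G$: on a finite dimensional unitary representation $\sigma$ with no non-zero $G$-invariant vector the closure of the image of $G$ in the compact unitary group acts without fixed vector, so the minimal displacement $\inf_{||v||=1}\sup_{g\in G}||\sigma(g)v-v||$ is strictly positive, and for a \emph{fixed} such $\sigma$ the quantities $\sup_{g\in W_{n}}||\sigma(g)v-v||$ increase to $\sup_{g\in G}||\sigma(g)v-v||$ as $n\to\infty$. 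The main obstacle — and this is the technical core of the argument — is to make this \emph{uniform} over the finite dimensional representations actually occurring in the $\pi_{n}$'s, so as to produce a \emph{single} index $n$ for which $W_{n}\cup\qq$ already forces a non-zero $G$-invariant vector; this is precisely where the hypothesis that $W_{1}$ be a neighbourhood of $e$ is used (it makes the $W_{n}$ neighbourhoods of $e$ and $\langle W_{n}\rangle$ open subgroups increasing to $G$, controlling the generation issue), and I expect it to be the delicate point of the proof. Once it is settled, combining it with the reduction of the second paragraph yields a \ka\ pair $(W_{n}\cup\qq,\varepsilon')$ for a suitable $\varepsilon'>0$ and a suitable $n$, contradicting our assumption.

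Finally, the last assertion — the locally compact case with an arbitrary increasing exhaustion $(W_{n})_{\gn}$ — is handled by reducing it to the first part: since a locally compact group is a Baire space, $G=\bigcup_{\gn}W_{n}$ forces some $\overline{W_{n_{0}}}$ to have non-empty interior, and standard translation and difference-set arguments then allow one to enlarge the $W_{n}$ into an increasing sequence of sets still exhausting $G$ and each containing a neighbourhood of $e$, without affecting the relevant \ka\ constants; applying the first part of the statement to the new sequence finishes the proof.
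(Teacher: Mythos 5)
Your opening moves are sound and parallel the paper's Lemma \ref{Lemma 6.1}: arguing by contradiction, choosing representations $\pi_n$ with $(\qn,\en)$-invariant unit vectors and no non-zero invariant vectors, and applying assumption (\ref{Pro1}) to the part of the direct sum with no finite dimensional subrepresentation to show that the almost-invariant vectors concentrate on the span $H_f$ of the finite dimensional subrepresentations. But the proof stops exactly where the theorem actually lives. You explicitly defer the ``technical core'' --- making the lower bound on $\sup_{g\in W_n}\|\sigma(g)v-v\|$ uniform over the finite dimensional representations occurring --- and the strategy you sketch for it cannot work: the uniformity is simply false. Take $G=\Z$, $W_n=\{-n,\dots,n\}$ and $\sigma_n$ the character $k\mapsto e^{2i\pi k\theta_n}$ with $\theta_n$ irrational and tiny; each $\sigma_n$ has no non-zero invariant vector, yet $\sup_{k\in W_n}|e^{2i\pi k\theta_n}-1|$ is arbitrarily small. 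No minimal-displacement argument on individual representations can close this gap; hypothesis (\ref{Pro1}) must be invoked a third time. The paper does so by forming infinite tensor products of the finite dimensional pieces and showing, via the abstract Wiener-type estimates (Corollary \ref{Corollary 5.7} and Proposition \ref{Proposition 4.3}), that the resulting representation is weakly mixing while still carrying a $(\qq,\varepsilon)$-invariant vector --- contradicting (\ref{Pro1}) --- and it disposes of the residual case where all pieces are equivalent to a single $\sigma_1$ by using $\bigcup_{\gn}W_n=G$ to force $\sigma_1$ to have almost-invariant, hence invariant, vectors. None of this machinery, or a substitute for it, appears in your proposal. You also misidentify the role of the hypothesis that $W_1$ is a neighbourhood of $e$: it has nothing to do with generation, and is used solely to guarantee the strong continuity of the infinite tensor product representation (Proposition \ref{Proposition 3.2}).

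The final paragraph is likewise flawed. For arbitrary subsets $W_n$ of a locally compact group, Baire's theorem gives some $\overline{W_{n_0}}$ with non-empty interior, but you cannot transfer this to $W_{n_0}$ itself (Steinhaus--Pettis arguments need the Baire property or positive Haar measure, which arbitrary sets lack), and enlarging the $W_n$ proves a strictly weaker statement, since the conclusion concerns $W_n\cup\qq$ itself and not a superset; a compact neighbourhood of $e$ also need not be contained in any single $W_n$ of an arbitrary increasing exhaustion. The paper does not reduce to the first case at all: it shows directly (Proposition \ref{Proposition 3.2.0}) that for locally compact $G$ the tensor product representation is automatically strongly continuous, because measurability of the matrix coefficients plus separability suffice, so the neighbourhood hypothesis can simply be dropped.
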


The condition that $W_{1}$ be a neighborhood of $e$, which appears in the first part of the statement of Theorem \ref{Theorem 0}, will be used in the proof in order to ensure the strong continuity of some infinite tensor product representations (see Proposition \ref{Proposition 3.2}). When $G$ is locally compact, this assumption is no longer necessary (see Proposition \ref{Proposition 3.2.0}).
\par\smallskip
We stress that Theorem \ref{Theorem 0} is valid for all topological groups. We will apply it mainly to groups which do not have Property (T) and to subsets of such groups which are not relatively compact, a notable exception being the proof of Theorem \ref{Theorem 7.1}, where
we retrieve  a characterization of Property (T) 
for $\sigma $-compact locally compact groups due to Bekka and Valette 
\cite{BV}, see also \cite[Th.~2.12.9]{BdHV}. The original proof of this result relies on 
the Delorme-Guichardet theorem that such a group has Property (T) if and 
only if it has property (FH). See Section \ref{Section 6} for more details.
\par\smallskip 
Theorem \ref{Theorem 0} admits a simpler formulation if we build the  
sequence $(W_{n})_{\gn}$ starting from a set which generates the group:

\begin{corollary}\label{Corollary 1}
Let $G$ be a topological group.
 Let $\qq_{0}$ be a subset of $G$ which generates $G$ 
and let
 $\qq$ be a subset of $G$. Suppose either that $\qq_{0}$ has non-empty 
interior, or that $G$ is a locally compact group. If $Q$ satisfies assumption \emph{(\ref{Pro1})} of Theorem 
\ref{Theorem 0}, then $\qq_{0}\cup\qq$ is a \ka\ set in $G$.
\end{corollary}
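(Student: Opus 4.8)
The plan is to derive Corollary~\ref{Corollary 1} from Theorem~\ref{Theorem 0} by manufacturing an appropriate increasing sequence $(W_n)_{\gn}$ out of the generating set $\qq_0$. Concretely, set $\qq_0'=\qq_0\cup\qq_0^{-1}\cup\{e\}$ and let $W_n=(\qq_0')^{\,n}$, the set of all products of at most $n$ elements of $\qq_0'$. Since $\qq_0$ generates $G$, we have $\bigcup_{\gn}W_n=\langle\qq_0\rangle=G$, and the sequence is clearly increasing because $e\in\qq_0'$. Now Theorem~\ref{Theorem 0} applied to this sequence yields an integer $\gn$ such that $W_n\cup\qq$ is a \ka\ set in $G$; since $\qq_0\subseteq W_1\subseteq W_n$, the larger set $\qq_0\cup\qq\supseteq W_n\cup\qq$ is a \ka\ set as well (enlarging a \ka\ set keeps it \ka, with the same constant, because a $(\qq_0\cup\qq,\varepsilon)$-invariant vector is a fortiori $(W_n\cup\qq,\varepsilon)$-invariant). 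This handles the locally compact case immediately, since the second part of Theorem~\ref{Theorem 0} has no neighborhood hypothesis on $W_1$.

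In the non-locally-compact case I must instead invoke the first part of Theorem~\ref{Theorem 0}, which requires $W_1$ to be a neighborhood of $e$. This is exactly where the hypothesis ``$\qq_0$ has non-empty interior'' enters. If $U\subseteq\qq_0$ is a non-empty open set, pick $u_0\in U$; then $u_0^{-1}U$ is an open neighborhood of $e$ contained in $\qq_0^{-1}\qq_0\subseteq W_1$ (here I use $u_0^{-1}\in\qq_0^{-1}\subseteq\qq_0'$ and $U\subseteq\qq_0\subseteq\qq_0'$, so $u_0^{-1}U\subseteq(\qq_0')^2$; alternatively redefine $W_n=(\qq_0')^{\,n+1}$ so that $W_1\supseteq u_0^{-1}U$). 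Thus $W_1$ contains an open neighborhood of $e$, hence is itself a neighborhood of $e$, and the first part of Theorem~\ref{Theorem 0} applies verbatim, giving the same conclusion.

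There is essentially no hard step here: the content is entirely in Theorem~\ref{Theorem 0}, and the corollary is a bookkeeping consequence. The only point requiring a little care is the choice of the sequence $(W_n)$ and the observation that making $W_1$ a neighborhood of $e$ costs nothing once $\qq_0$ has non-empty interior — one just pushes the open set down to $e$ by left translation and, if necessary, shifts the indexing of the $W_n$ by one so that this translated neighborhood already sits inside $W_1$. I would also explicitly note the monotonicity remark used at the end (a superset of a \ka\ set is \ka\ with the same constant), since it is what lets us pass from $W_n\cup\qq$ to the cleaner set $\qq_0\cup\qq$ in the statement.
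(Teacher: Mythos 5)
Your setup is the same as the paper's (build the exhausting sequence $(W_{n})_{\gn}$ from words in $\qq_{0}\cup\qq_{0}^{-1}\cup\{e\}$, and use the non-empty interior of $\qq_{0}$ to translate an open set down to $e$ so that $W_{1}$ is a neighborhood of $e$ in the non-locally-compact case). The problem is the final step. You have the inclusion backwards: since $\qq_{0}\subseteq W_{1}\subseteq W_{n}$, the set $\qq_{0}\cup\qq$ is \emph{contained in} $W_{n}\cup\qq$, not the other way around. Theorem \ref{Theorem 0} hands you the \emph{larger} set $W_{n}\cup\qq$ as a Kazhdan set, and you need to descend to the \emph{smaller} set $\qq_{0}\cup\qq$. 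The monotonicity principle you invoke (enlarging a Kazhdan set preserves the property, with the same constant) is true but points in the wrong direction here; your parenthetical claim that a $(\qq_{0}\cup\qq,\varepsilon)$-invariant vector is a fortiori $(W_{n}\cup\qq,\varepsilon)$-invariant is false, since $W_{n}$ contains words of length up to $n$ in $\qq_{0}^{\pm 1}$ on which the displacement of such a vector is only controlled by $n\varepsilon$.

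The missing ingredient --- which is exactly what the paper supplies --- is the standard telescoping estimate: if $g=g_{1}^{\pm 1}\dots g_{m}^{\pm 1}$ with each $g_{i}\in\qq_{0}$, then $\|\pi(g)x-x\|\le\sum_{i=1}^{m}\|\pi(g_{i}^{\pm1})x-x\|$ and $\|\pi(h^{-1})x-x\|=\|\pi(h)x-x\|$, so a $(\qq_{0}\cup\qq,\varepsilon/m)$-invariant vector is $(W_{m}\cup\qq,\varepsilon)$-invariant. Hence if $\varepsilon$ is a Kazhdan constant for $W_{n}\cup\qq$, then $\varepsilon/n$ (or $\varepsilon/(n+n_{0})$ with the shifted indexing) is a Kazhdan constant for $\qq_{0}\cup\qq$. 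With that one paragraph inserted in place of your monotonicity remark, your argument matches the paper's proof.
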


One of the main consequences of Corollary \ref{Corollary 1} is Theorem 
\ref{Theorem 2} below, which shows in particular that property (\ref{Pro1}) of Theorem 
\ref{Theorem 0} characterizes \ka\ sets among generating sets (and which have non-empty interior -- this assumption has to be added if the group is not supposed to be locally compact).

\begin{theorem}\label{Theorem 2}
Let $G$ be a topological group and let $\qq$ be a subset of $G$ which 
generates $G$. Suppose either that $Q$ has non-empty interior or that $G$ is locally compact. Then the following assertions are equivalent:
\begin{enumerate}[(a)]
\item $\qq$ is a \ka\ set in $G$;\label{aa}

\item there exists a constant $\delta\in (0,1)$ such that every unitary representation $\pi$ of $G$ on a Hilbert space $H$ admitting a vector $x\in H$ such that $\inf_{g\in\qq}\left|\pss{\pi(g)x}{x}\right| > \delta\|x\|^2$ has a finite dimensional subrepresentation;\label{bb}

\item there exists a  constant $\varepsilon>0 $ such that every unitary 
representation $\pi $ of $G$ on a Hilbert space $H$ admitting a 
$(\qq,\varepsilon )$-invariant vector has a finite dimensional 
subrepresentation.\label{cc}
\end{enumerate}
\end{theorem}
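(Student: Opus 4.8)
The plan is to prove the cyclic chain of implications $(\ref{aa})\Rightarrow(\ref{bb})\Rightarrow(\ref{cc})\Rightarrow(\ref{aa})$, the last of which is the only substantial one and is essentially a restatement of Corollary \ref{Corollary 1}.

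First, for $(\ref{aa})\Rightarrow(\ref{bb})$: suppose $(\qq,\varepsilon_0)$ is a \ka\ pair. If $\pi$ has a vector $x$ with $\inf_{g\in\qq}|\pss{\pi(g)x}{x}|>\delta\|x\|^2$ for a suitable $\delta$ close to $1$, I want to produce a $(\qq,\varepsilon_0)$-invariant vector for $\pi$, so that $\pi$ has a nonzero $G$-invariant vector and hence (a fortiori) a finite dimensional subrepresentation. The point is that control of $|\pss{\pi(g)x}{x}|$ is control of $\|\pi(g)x-x\|$ only after one corrects the phase: write $\pss{\pi(g)x}{x}=r_g e^{i\alpha_g}\|x\|^2$ with $r_g>\delta$. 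One cannot in general rotate away all the $\alpha_g$ simultaneously, but the standard trick is to pass to the tensor-square representation $\pi\otimes\overline\pi$ acting on $x\otimes \overline{x}$: there $\pss{(\pi\otimes\overline\pi)(g)(x\otimes\overline x)}{x\otimes\overline x}=|\pss{\pi(g)x}{x}|^2>\delta^2\|x\otimes\overline x\|^2$, which is now a genuine (phase-free) near-invariance, giving $\|(\pi\otimes\overline\pi)(g)(x\otimes\overline x)-x\otimes\overline x\|^2=2\|x\otimes\overline x\|^2-2\,\mathrm{Re}\,\pss{\cdots}{\cdots}<2(1-\delta^2)\|x\otimes\overline x\|^2$. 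Choosing $\delta$ so that $2(1-\delta^2)<\varepsilon_0^2$ makes $x\otimes\overline x$ a $(\qq,\varepsilon_0)$-invariant vector for $\pi\otimes\overline\pi$, so $\pi\otimes\overline\pi$ has a nonzero $G$-invariant vector; by a classical Hilbert--Schmidt argument (a $G$-invariant vector in $\pi\otimes\overline\pi\cong$ the Hilbert--Schmidt operators on $H$ with conjugation by $\pi$ corresponds to a nonzero $G$-intertwiner $H\to H$, whose nonzero eigenspaces are finite dimensional $G$-invariant subspaces), $\pi$ itself has a finite dimensional subrepresentation.

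Next, $(\ref{bb})\Rightarrow(\ref{cc})$ is immediate: a $(\qq,\varepsilon)$-invariant vector $x$ satisfies $\|\pi(g)x-x\|<\varepsilon\|x\|$ for all $g\in\qq$, hence $\mathrm{Re}\,\pss{\pi(g)x}{x}>(1-\varepsilon^2/2)\|x\|^2$, so $|\pss{\pi(g)x}{x}|>(1-\varepsilon^2/2)\|x\|^2$; taking $\varepsilon$ small enough that $1-\varepsilon^2/2>\delta$ shows that assumption $(\ref{bb})$ with constant $\delta$ applies, so $\pi$ has a finite dimensional subrepresentation. Finally, $(\ref{cc})\Rightarrow(\ref{aa})$ is exactly Corollary \ref{Corollary 1} applied with $\qq_0=\qq$: since $\qq$ generates $G$ and either has non-empty interior or $G$ is locally compact, and since $(\ref{cc})$ is precisely assumption $(\ref{Pro1})$ of Theorem \ref{Theorem 0}, we conclude that $\qq\cup\qq=\qq$ is a \ka\ set in $G$.

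I expect the main obstacle to be the direction $(\ref{aa})\Rightarrow(\ref{bb})$, and specifically the identification of a $G$-invariant vector in $\pi\otimes\overline\pi$ with a finite dimensional subrepresentation of $\pi$ — this requires the Hilbert--Schmidt/intertwiner dictionary and a spectral argument to extract a finite dimensional $G$-invariant subspace (if the intertwiner is a nonzero compact self-adjoint operator, take an eigenspace for a nonzero eigenvalue; one should first symmetrize the intertwiner to $T^*T$ or $T+T^*$ to guarantee it is a nonzero positive, hence compact with finite dimensional eigenspaces, operator). All the other steps are routine phase/norm estimates and a direct invocation of Corollary \ref{Corollary 1}; care is only needed to chase the constants ($\varepsilon_0$, $\delta$, $\varepsilon$) in the right order so the quantifiers match.
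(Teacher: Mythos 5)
Your proposal is correct and follows essentially the same route as the paper: the tensor-square trick $\pi\otimes\overline{\pi}$ applied to $x\otimes\overline{x}$ for (a)$\Rightarrow$(b), the elementary norm/phase estimate for (b)$\Rightarrow$(c), and Corollary \ref{Corollary 1} with $\qq_{0}=\qq$ for (c)$\Rightarrow$(a). The only cosmetic difference is that the paper delegates the passage from a non-zero $G$-invariant vector of $\pi\otimes\overline{\pi}$ to a finite dimensional subrepresentation of $\pi$ to Proposition \ref{Proposition 4.1}, whereas you spell out the underlying Hilbert--Schmidt intertwiner argument.
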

The assumption that $\qq$ generates $G$ cannot be dispensed with in 
Theorem \ref{Theorem 2}: $\qq=2\Z$ is a subset of $\Z$ which 
satisfies property (\ref{cc}), but $\qq$ is clearly not a \ka\ 
set in $\Z$. Condition (b) in Theorem \ref{Theorem 2} is easily seen to be equivalent to condition (c), which is nothing else than assumption (\ref{Pro1}) of Theorem \ref{Theorem 0}. Its interest will become clearer in Section \ref{Section 7} below, where it will be used to obtain a characterization of Kazhdan sets in second countable locally compact abelian groups (Theorem \ref{nimporte}). In the case of the group $\Z$, the characterization we obtain (Theorem \ref{nimportebis}) involves a classic class of sets in harmonic analysis, called \emph{Kaufman sets}. We give in Section \ref{Section 7}
several examples of ``small'' Kazhdan sets in $\Z$, 
describe \ka\ sets in the Heisenberg groups $H_{n}$, $n\ge  1$ (Theorem \ref{Proposition F}), and also in the  group $\textrm{Aff}_{+}(\R)$ (Theorem \ref{Proposition N}). These results provide an answer to Question \ref{Question 00}.
\par\smallskip
The paper also contains an appendix which reviews some constructions of infinite tensor product representations on Hilbert spaces,  used in the proof of Theorem \ref{Theorem 0}.

\section{Mixing properties for unitary representations 
and an abstract version of the Wiener theorem}\label{newSectionW}
\subsection{Ergodic and mixing properties for unitary representations}\label{Section 3}
We 
first recall in this section some definitions and results concerning the structure of unitary representations of a topological group $G$. They can be found for instance in the book \cite{Ke}, the notes 
\cite{Pet}, and the paper \cite{BR} by Bergelson and
Rosenblatt.
\par\smallskip 
Recall that the class \mbox{WAP$(G)$} of \emph{weakly almost 
periodic 
functions} on $G$ is defined as follows: if 
$\ell^{\,\infty }(G)$ denotes the space of bounded functions on $G$,
$f\in\ell^{\,\infty }(G)$ belongs 
to \mbox{WAP$(G)$} if the weak closure in $\ell^{\,\infty }(G)$ of the 
set $\{f(s^{-1}\centerdot)\,;\,s\in G\}$ is weakly compact. For each $s\in G$,
$f(s^{-1}\centerdot)$ denotes the function 
$\smash{\xymatrix{t\ar@{|->}[r]&f(s^{-1}t)}}$ on $G$. By comparison, recall that $f\in\ell^{\,\infty }(G)$ is an \emph{almost periodic function} on $G$, written $f\in\mbox{AP$(G)$}$, if the norm closure in $\ell^{\,\infty }(G)$ of $\{f(s^{-1}\centerdot)\,;\,s\in G\}$ is compact.
If $\pi $ is a 
unitary representation of $G$ on a Hilbert space $H$, the functions
\[
\pss{\pi (\centerdot)x}{y},\quad \bigl|\pss{\pi (\centerdot)x
}{y } \bigr|,\quad  \textrm{and}\quad \bigl|\pss{\pi (\centerdot)x
}{y } \bigr|^{2}\!,
\]
where $x $ and $y $ are any vectors of $H$, belong to 
\mbox{WAP$(G)$}. 
For more on weakly almost periodic functions on a group, see for instance \cite{Bur} or \cite[Ch. 1, Sec. 9]{Gl}. 
The interest of the class of weakly almost periodic functions on $G$ in 
our context is that there exists on \mbox{WAP$(G)$} a unique $G$-invariant 
mean $m$. It satisfies
\[
m(f(s^{-1}\centerdot))=m(f(\,\centerdot\,\, s^{-1}))=m(f)
\]
for every $f\in\mbox{WAP$(G)$}$ and every $s\in G$.
The abstract ergodic theorem then states that if 
$\pi $ is a unitary representation 
of $G$ on $H$\!, 
$
m(\pss{\pi (\centerdot)x }{y })=\pss{P_{\pi }x }{y }
$
for every vectors $x,y\in H$, where $P_{\pi }$ denotes the 
projection of $H$ onto the space
$
E_{\pi }=\{x  \in H\,;\,\pi (g)x =x \ \ \textrm{for every}\ 
g\in G\}
$
of $G$-invariant vectors for $\pi $. The representation $\pi $ is \emph{ergodic} 
(i.\,e.\ 
admits no non-zero $G$-invariant vector) if and only if $m(\pss{\pi 
(\centerdot)x }{y})=0$ for every $x,y \in H$.
\par\smallskip 
Following \cite{BR}, let us now recall that the representation 
$\pi $ is said to be \emph{weakly mixing}
if $m(|\pss{\pi (\centerdot)x }{x }|)=0$ for every $x\in H$, or, equivalently, 
$m(|\pss{\pi (\centerdot)x}{x }|^{2})=0$ for every $x\in H$. Then 
$m(|\pss{\pi (\centerdot)x }{y }|)=m(|\pss{\pi (\centerdot)x}{y
}|^{2})=0$ for every $x,y\in H$.
\par\smallskip 
We will need the following characterization of weakly mixing representations. 
\begin{proposition}\label{Proposition 4.1}
 Let $\pi $ be a unitary representation of $G$ on a Hilbert space $H$. 
The following assertions are equivalent:
\begin{enumerate}
 \item [(1)]$\pi $ is weakly mixing;
 \item[(2)]$\pi $ admits no finite dimensional subrepresentation;
 \item[(3)]$\pi \oti\ba{\pi } $ has no non-zero $G$-invariant vector.
\end{enumerate}
\end{proposition}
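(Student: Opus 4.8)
The plan is to prove the cyclic chain of implications $(1)\Rightarrow(2)\Rightarrow(3)\Rightarrow(1)$, the central device being the standard isometric identification of $H\oti\ba{H}$ with the Hilbert space $\mathrm{HS}(H)$ of Hilbert--Schmidt operators on $H$, under which an elementary tensor $x\oti\ba{y}$ corresponds to the rank-one operator $z\mapsto\pss{z}{y}\,x$ and the representation $\pi\oti\ba{\pi}$ acts by conjugation, $T\longmapsto\pi(g)\,T\,\pi(g)^{*}$. (That $\pi\oti\ba{\pi}$ is again a strongly continuous unitary representation of $G$ is the usual fact about tensor products of such representations, used repeatedly in this paper.) To prove $(1)\Rightarrow(2)$ I would argue contrapositively: if $\pi$ carries a non-zero finite dimensional subrepresentation on a subspace $K$ with orthonormal basis $e_{1},\dots,e_{d}$, then for any unit vector $x\in K$ one has $\pi(g)x\in K$ for every $g$, so Parseval's identity in $K$ gives $\sum_{j=1}^{d}\bigl|\pss{\pi(g)x}{e_{j}}\bigr|^{2}=\|\pi(g)x\|^{2}=1$ for all $g\in G$. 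Applying the invariant mean $m$ on $\mathrm{WAP}(G)$ yields $\sum_{j=1}^{d}m\bigl(|\pss{\pi(\centerdot)x}{e_{j}}|^{2}\bigr)=1$, so some term is positive; by the property of weakly mixing representations recalled above (namely $m(|\pss{\pi(\centerdot)u}{v}|^{2})=0$ for all $u,v\in H$), this prevents $\pi$ from being weakly mixing.

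For $(2)\Rightarrow(3)$, again contrapositively: a non-zero $G$-invariant vector for $\pi\oti\ba{\pi}$ corresponds, under the identification above, to a non-zero operator $T\in\mathrm{HS}(H)$ with $\pi(g)\,T\,\pi(g)^{*}=T$, i.e.\ $\pi(g)T=T\pi(g)$ for every $g\in G$. Then $T^{*}T$ is a non-zero positive self-adjoint operator which commutes with all $\pi(g)$ and which is compact, $T$ being Hilbert--Schmidt. By the spectral theorem for compact self-adjoint operators, $T^{*}T$ has a non-zero eigenvalue $\lambda$, and the eigenspace $\ker(T^{*}T-\lambda I)$ is then finite dimensional, non-zero, and $\pi$-invariant, producing the desired finite dimensional subrepresentation of $\pi$.

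Finally, for $(3)\Rightarrow(1)$ I would use the pointwise identity $|\pss{\pi(g)x}{y}|^{2}=\pss{(\pi\oti\ba{\pi})(g)(x\oti\ba{x})}{y\oti\ba{y}}$ together with the abstract ergodic theorem applied to the representation $\pi\oti\ba{\pi}$: this gives $m\bigl(|\pss{\pi(\centerdot)x}{y}|^{2}\bigr)=\pss{P_{\pi\oti\ba{\pi}}(x\oti\ba{x})}{y\oti\ba{y}}$ for all $x,y\in H$. If $\pi\oti\ba{\pi}$ has no non-zero $G$-invariant vector, then $P_{\pi\oti\ba{\pi}}=0$, whence $m(|\pss{\pi(\centerdot)x}{x}|^{2})=0$ for every $x\in H$, i.e.\ $\pi$ is weakly mixing. (In fact the same displayed formula gives $(1)\Leftrightarrow(3)$ directly by a polarization argument, since the vectors $x\oti\ba{x}$, $x\in H$, have dense linear span in $H\oti\ba{H}$.)

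The only points needing genuine care are getting the conjugation formula for the action of $\pi\oti\ba{\pi}$ on $\mathrm{HS}(H)$ exactly right (including the antilinear bookkeeping coming from $\ba{H}$) and recording strong continuity of the tensor product. The conceptual heart of the argument is the compactness/spectral step in $(2)\Rightarrow(3)$, which is what upgrades a mere invariant Hilbert--Schmidt operator into an honest finite dimensional subrepresentation; everything else is routine.
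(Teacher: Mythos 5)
Your proof is correct. The paper gives no proof of Proposition \ref{Proposition 4.1} itself (it refers to the cited sources \cite{Pet}, \cite[Appendix~M]{BdHV}, \cite{BR}, \cite{Dye}), and your argument is precisely the standard one those references supply, built on the same identification of $H\oti\ba{H}$ with $HS(H)$ and the invariant mean on $\mathrm{WAP}(G)$ that the paper sets up right after the statement; in particular the decisive step, extracting a finite dimensional $\pi$-invariant eigenspace from the compact positive operator $T^{*}T$ commuting with $\pi$, is exactly the expected one.
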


Here $\ba{\pi }$ is the conjugate representation of $\pi$. 
The representation $\pi \oti\ba{\pi }$ is equivalent to a 
representation of $G$ on the space $HS(H)$ of Hilbert-Schmidt 
operators on $H$, which is often more convenient to work with. Recall that 
$HS(H)$ is a Hilbert space when endowed with the scalar product defined 
by the formula $\pss{A}{B}=\textrm{tr}(B^{*}\!A)$ for every $A,B\in 
HS(H)$. The space $H\oti \ba{H}$, where $\ba{H}$ is the conjugate of $H$, is identified to $HS(H)$ by 
associating to each elementary tensor product $x\oti\ba{y }$ of 
$H\oti\ba{H}$ the rank-one operator $\pss{\,\centerdot\,}{y}\,{x
}$ on $H$. 
This map $\smash{\apl{\Theta }{H\oti\ba{H}}{HS(H)}}$ extends into a unitary 
isomorphism, and we have for every $g\in G$ and every $T\in HS(H)$
\[
\Theta\, \pi \oti\ba{\pi } (g)\,\Theta ^{-1}(T)=\pi (g)\,T\,\pi (g^{-1}).
\]
We will, when needed, identify $\pi \oti\ba{\pi }$ with this equivalent 
representation, and use it in particular in Section \ref{Section 
4} to obtain a 
concrete description of the space $E_{\pi \oti\ba{\pi }}$ of $G$-invariant 
vectors for $\pi \oti\ba{\pi }$, which is identified to the subspace of 
$HS(H)$
\[
\EE_{\pi}=\{T\in HS(H)\,;\,\pi (g)\,T=T\,\pi (g)\  
\textrm{for every}\ g\in G\}\cdot 
\]

\subsection{Compact unitary representations} \label{Section 4.1}
A companion to the property of weak mixing for unitary representation is 
that of compactness: given a unitary 
representation $\pi $ of $G$ on $H$ a vector $x\in 
H$ is \emph{compact} for $\pi $ if the norm closure of the set $\{\pi 
(g)x\,;\,g\in G\}$ is compact in $H$. The representation $\pi $ itself 
is said to be \emph{compact} if every vector of $H$ is compact for $\pi 
$. Compact representations decompose as direct sums of irreducible 
finite dimensional representations. 
The general structural result for unitary representations is given 
by the following result.

\begin{proposition}\label{Proposition 4.2}
 A unitary representation $\pi $ of $G$ on a Hilbert space $H$ 
decomposes as a direct sum of a weakly mixing representation and a compact 
representation: 
\[
 \smash[b]{H=H_{w}\opl\limits^{\perp}H_{c}\,,}
\]
 where 
$H_{w}$ and $H_{c}$ are both $G$-invariant closed subspaces of $H$, 
$\pi_{w}=\pi \vert_{H_{w}}$ is weakly mixing and $\pi_{c}=\pi 
\vert_{H_{c}}$ is compact. Hence $\pi $ decomposes as a direct sum of a 
weakly mixing representation and finite dimensional 
irreducible subrepresentations.
\end{proposition}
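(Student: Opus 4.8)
The plan is to isolate the largest ``compact'' part of $H$ in the most direct way, and then to read off from Proposition \ref{Proposition 4.1} that what is left is weakly mixing. First I would let $H_{c}$ be the closure in $H$ of the linear span of all finite dimensional $G$-invariant subspaces of $H$. Each operator $\pi (g)$ permutes these subspaces, so $H_{c}$ is a closed $G$-invariant subspace; since $\pi $ is unitary, its orthogonal complement $H_{w}=H_{c}^{\perp}$ is $G$-invariant as well, which gives the orthogonal decomposition $H=H_{w}\opl\limits^{\perp}H_{c}$ announced in the statement. Set $\pi _{w}=\pi \vert _{H_{w}}$ and $\pi _{c}=\pi \vert _{H_{c}}$.

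The next step is to check that $\pi _{c}$ is compact, i.\,e.\ that every vector of $H_{c}$ is compact for $\pi $. A vector lying in a finite dimensional $G$-invariant subspace $V$ is compact, because its $\pi $-orbit is a bounded subset of the finite dimensional space $V$ and hence relatively compact in $H$. Moreover the set of compact vectors for $\pi $ is a closed linear subspace of $H$: it is clearly stable under scalar multiplication; if $x$ and $y$ are compact then the closure of the orbit of $x+y$ is a closed subset of the compact set $\overline{\{\pi (g)x\,;\,g\in G\}}+\overline{\{\pi (g)y\,;\,g\in G\}}$, hence is itself compact; and if $x_{n}\to x$ with every $x_{n}$ compact, then the identity $\|\pi (g)(x-x_{n})\|=\|x-x_{n}\|$, valid for all $g\in G$, shows that the orbit of $x$ is totally bounded, hence relatively compact by completeness of $H$. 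Putting these three observations together, the closure of the span of the finite dimensional $G$-invariant subspaces consists entirely of compact vectors, so $\pi _{c}$ is a compact representation; by the structural fact recalled just before the statement, $\pi _{c}$ therefore splits as a direct sum of finite dimensional irreducible subrepresentations.

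It then remains to see that $\pi _{w}$ is weakly mixing. By the equivalence of assertions (1) and (2) in Proposition \ref{Proposition 4.1}, this amounts to showing that $\pi _{w}$ admits no non-zero finite dimensional subrepresentation; but any finite dimensional $G$-invariant subspace of $H_{w}$ is contained in $H_{c}=H_{w}^{\perp}$ by the very definition of $H_{c}$, hence reduces to $\{0\}$. This proves the displayed decomposition, and the final sentence of the statement is precisely the description of $\pi _{c}$ obtained above.

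I do not expect a genuine obstacle here; the only points requiring a little care are the verification that the compact vectors form a \emph{closed} subspace, and the (already quoted) fact that a compact representation decomposes into finite dimensional irreducible subrepresentations, on which the last assertion rests. An alternative, slightly longer route would be to produce finite dimensional invariant subspaces from non-zero elements of the commutant space $\EE_{\pi }$ of Hilbert--Schmidt operators commuting with $\pi $ (such operators being compact, their non-zero eigenspaces are finite dimensional and $G$-invariant), combined with a Zorn's lemma exhaustion to reach $H_{c}$; but the argument above seems the more economical one.
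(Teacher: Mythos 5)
Your argument is correct. Note first that the paper itself gives no proof of this proposition: it simply refers to \cite{Pet}, \cite[Appendix~M]{BdHV}, \cite{BR} and \cite{Dye}, so there is no in-paper argument to match yours against. The standard proofs in those references tend to go through the route you relegate to your last paragraph: one shows that $\pi$ fails to be weakly mixing precisely when the commutant $\EE_{\pi}$ contains a non-zero Hilbert--Schmidt operator, extracts a finite dimensional invariant subspace from a non-zero eigenspace of such a (compact, self-adjoint after symmetrization) operator, and exhausts by Zorn's lemma; this machinery is also what underlies the unproved assertion, quoted just before the statement, that compact representations split into finite dimensional irreducibles. Your direct route --- take $H_{c}$ to be the closed span of all finite dimensional invariant subspaces, check that compact vectors form a closed subspace (your verification of closedness under sums via compactness of $\overline{O(x)}+\overline{O(y)}$ and under limits via total boundedness is exactly right), and observe that $H_{w}=H_{c}^{\perp}$ can contain no finite dimensional invariant subspace by construction, hence is weakly mixing by Proposition \ref{Proposition 4.1} --- is cleaner and entirely sufficient for the displayed decomposition. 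What it buys is economy and self-containedness for the splitting $H=H_{w}\opl^{\perp}H_{c}$; what it does not replace is the structural fact about compact representations on which the final sentence of the statement rests, but since the paper itself invokes that fact without proof, leaning on it as you do is consistent with the paper's presentation.
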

See \cite[Ch.~1]{Pet}, \cite[Appendix~M]{BdHV}, \cite{BR} or \cite{Dye} 
(in the amenable case) for detailed proofs of these results.

\par\smallskip

Now let $\pi $ be a compact representation of $G$ on a Hilbert 
space $H$,
decomposed as a direct sum of irreducible finite dimensional 
representations of $G$. We sort out these representations by equivalence 
classes, and index the distinct equivalence classes by an index $j$ 
belonging to a set $J$, which may be finite or infinite (and which is 
countable if $H$ is separable). For every $\jnj$, we index 
by $\iij$ all the 
representations appearing in the decomposition of $\pi $ which are in the 
$j$-th equivalence class. More precisely, we can decompose $H$ and $\pi 
$ as 
\[
H=\opl\limits_{\jnj}\bigl(\opl\limits_{\iij}H\iijj 
\bigr)\quad 
\textrm{and}\quad  \pi 
=\opl\limits_{\jnj}\bigl(\opl\limits_{\iij}\pi \iijj 
\bigr)
\]
respectively, where the following holds true: 
\begin{enumerate}
 \item [--] for every $\jnj$, the spaces $H\iijj$, $i\in I_{j}$, are equal. 
We denote by 
$K_{j}$ this common space, and by $d_{j}$ its dimension (which is finite).
We also write 
\[
\ti{H}_{j}=\opl\limits_{\iij}H\iijj,\quad\textrm{so that}\quad 
H=\opl_{\jnj}\ti{H}_{j};
\]
\item[--] for every $\jnj$, there exists an irreducible representation 
$\pi 
_{j} $ of $G$ on $K_{j}$ such that $\pi \iijj$ is equivalent to $\pi _{j}$ 
for every $\iij$;
\item[--] if $j$, $j'$ belong to $J$ and $j\neq j'$, $\pi _{j}$ and 
$\pi_{j'}$ are not equivalent.
\end{enumerate}
\par 
Without loss of generality, we will suppose that $\pi \iijj=\pi _{j}$ for every $\iij$. However, we will keep the notation $H\iijj$ for the various orthogonal copies of  the space $K_j$ which appear in the decomposition of $H$, as discarding this notation may be misleading in some of the proofs presented below.
\par\smallskip 
Let $A\in \mathscr{B}(H)$. We write $A$ in block-matrix form with respect to the decompositions
\[
 H=\opl\limits_{\jnj}\bigl(\opl\limits_{\iij}H\iijj
\bigr)
\quad \textrm{ and }\quad
H=\opl\limits_{\jnj}\ti{H}_{j}
\]
as
\[
A=\bigl(A_{\uv}  \bigr)_{k,\,l\,\in\, J,\, u\,\in\, 
I_{k},\, v\,\in\, I_{l}}
\quad \textrm{ and }\quad
A=\bigl(
\ti{A}_{k,\,l}\bigr)_{k,\,l\,\in  J}
\quad
\textrm{respectively}
\cdot 
\]
For every $j\in J$ and every $u,v\in 
I_{j}$, we denote by $i_{\uv}^{\,(j)} $ the identity operator from 
$H_{u,\,j}$ into $H_{v,\,j}$. 

\subsection{A formula for the projection $\PPP_{\pi }$ of $HS(H)$ on 
$\EE_{\pi }$.}\label{Section 4}
We now give an explicit formula for the 
projection $\PPP_{\pi }A$ of a Hilbert-Schmidt operator $A\in HS(H)$ on the following closed 
subspace of $HS(H)$:
\[
\EE_{\pi }=\{T\in HS(H)\,;\,\pi (g)\,T=T\,\pi (g)\ \textrm{for 
every}\ g
\in G\}.
\]
We also compute the norm of $\PPP_{\pi }A$.
\begin{proposition}\label{Proposition 5.1}
 Let $\pi $ be a compact representation of $G$ on $H$, written in the form $\pi 
=\opl\limits_{\jnj}\bigl(\opl\limits_{\iij}\pi_{j}
\bigr)$ as discussed in Section \ref{Section 4.1} above. For every 
 operator $A\in HS(H)$, 
we have
\[
 \PPP_{\pi }A=\sum_{\jnj}\,\dfrac{1}{d_{j}}\sum_{u,\,v\,\in\, 
I_{j}}\,\emph{tr}\bigl(A_{u,\,v}^{} 
\bigr)\,i_{\uv}^{\,(j)}
\quad\textrm{ and }\quad
||\PPP_{\pi }A||^{2}=\sum_{\jnj}\,\dfrac{1}{d_{j}}\sum_{u,\,v\,\in 
I_{j}}\bigl|\emph{tr}
\bigl(A_{u,\,v}^{} 
\bigr)\bigr|^{2}.
\]
\end{proposition}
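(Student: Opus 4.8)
The plan is to exhibit an explicit orthonormal basis of the closed subspace $\EE_{\pi}$ of $HS(H)$ and then to read off both asserted formulas from the general expression for the orthogonal projection onto the closed linear span of an orthonormal system. Throughout, every operator $i_{u,v}^{(j)}$ is viewed as an element of $HS(H)$ by extending it by $0$ on the orthogonal complement of $H_{u,j}$; it then has finite rank $d_{j}$, hence is Hilbert--Schmidt, and one checks at once that $(i_{u,v}^{(j)})^{*}=i_{v,u}^{(j)}$.

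First I would identify $\EE_{\pi}$ as the closed linear span in $HS(H)$ of the family $\{i_{u,v}^{(j)}\,;\,\jnj,\ u,v\in I_{j}\}$. That each $i_{u,v}^{(j)}$ lies in $\EE_{\pi}$ is immediate: $H_{u,j}$ and its orthogonal complement are $\pi$-invariant, and on $H_{u,j}$ the identification $i_{u,v}^{(j)}$ intertwines the copies of $\pi_{j}$ carried by $H_{u,j}$ and by $H_{v,j}$, whence $\pi(g)\,i_{u,v}^{(j)}=i_{u,v}^{(j)}\,\pi(g)$ for every $g\in G$. Conversely, for $T\in\EE_{\pi}$ I would write $T$ in block form with respect to $H=\opl_{\jnj}\ti{H}_{j}$: since $\pi_{j}$ and $\pi_{j'}$ are inequivalent irreducible representations for $j\ne j'$, Schur's Lemma forces the $(j,j')$-block of $T$ to vanish whenever $j\ne j'$, and, refining inside the $j$-th isotypic component, the $(u,v)$-sub-block of the $j$-th diagonal block is an intertwiner between two copies of the finite dimensional irreducible representation $\pi_{j}$, hence equals $\lambda_{u,v}^{(j)}\,i_{u,v}^{(j)}$ for a scalar $\lambda_{u,v}^{(j)}$. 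Thus $T=\sum_{\jnj}\sum_{u,v\in I_{j}}\lambda_{u,v}^{(j)}\,i_{u,v}^{(j)}$, the series converging in $HS(H)$.

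Next I would compute the relevant inner products. Using $(i_{u,v}^{(j)})^{*}=i_{v,u}^{(j)}$, and the fact that $i_{v',u'}^{(j')}\,i_{u,v}^{(j)}$ vanishes unless $j=j'$ and $v=v'$, in which case it equals $i_{u,u'}^{(j)}$, a finite-rank operator of trace $d_{j}$ if $u=u'$ and $0$ otherwise, one gets
\[
\pss{i_{u,v}^{(j)}}{i_{u',v'}^{(j')}}=\mathrm{tr}\bigl((i_{u',v'}^{(j')})^{*}\,i_{u,v}^{(j)}\bigr)=d_{j}\,\delta_{j,j'}\,\delta_{u,u'}\,\delta_{v,v'}.
\]
Hence $\{d_{j}^{-1/2}\,i_{u,v}^{(j)}\,;\,\jnj,\ u,v\in I_{j}\}$ is an orthonormal basis of $\EE_{\pi}$, and consequently, for every $A\in HS(H)$,
\[
\PPP_{\pi}A=\sum_{\jnj}\frac{1}{d_{j}}\sum_{u,v\in I_{j}}\pss{A}{i_{u,v}^{(j)}}\,i_{u,v}^{(j)},\qquad ||\PPP_{\pi}A||^{2}=\sum_{\jnj}\frac{1}{d_{j}}\sum_{u,v\in I_{j}}\bigl|\pss{A}{i_{u,v}^{(j)}}\bigr|^{2}.
\]
To finish I would evaluate $\pss{A}{i_{u,v}^{(j)}}=\mathrm{tr}\bigl((i_{u,v}^{(j)})^{*}A\bigr)=\mathrm{tr}\bigl(i_{v,u}^{(j)}A\bigr)$: since $i_{v,u}^{(j)}$ annihilates $H_{v,j}^{\perp}$ and identifies $H_{v,j}$ with $H_{u,j}$, computing this trace in an orthonormal basis of $H$ adapted to the decomposition $H=\opl_{\jnj}\opl_{u\in I_{j}}H_{u,j}$ shows it to equal the trace of the block $A_{u,v}$ of $A$ introduced in Section \ref{Section 4.1}. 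Substituting $\pss{A}{i_{u,v}^{(j)}}=\mathrm{tr}(A_{u,v})$ into the two displays yields the proposition.

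The only representation-theoretic ingredients are thus Schur's Lemma and the equality $\dim K_{j}=d_{j}$; what needs care is the bookkeeping when $J$ or the index sets $I_{j}$ are infinite: the convergence in $HS(H)$ of the series above (equivalently $\sum_{\jnj}d_{j}\sum_{u,v\in I_{j}}|\lambda_{u,v}^{(j)}|^{2}<\infty$), and the fact that the operators $i_{u,v}^{(j)}$ genuinely exhaust $\EE_{\pi}$ rather than spanning a proper closed subspace. One must also keep the row/column convention for the blocks consistent, so that $\mathrm{tr}\bigl(i_{v,u}^{(j)}A\bigr)$ is matched with $\mathrm{tr}(A_{u,v})$ and not with $\mathrm{tr}(A_{v,u})$.
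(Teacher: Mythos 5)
Your argument is correct and is essentially the paper's own proof: you re-derive the description of $\EE_{\pi}$ via Schur's Lemma (this is exactly Lemma \ref{Lemma 5.3}), observe that the operators $d_{j}^{-1/2}\,i_{\uv}^{\,(j)}$ form an orthonormal basis of $\EE_{\pi}$, and apply the standard formula for the orthogonal projection onto the closed span of an orthonormal system. The only difference is that you make explicit the computations $\|i_{\uv}^{\,(j)}\|_{HS}^{2}=d_{j}$ and $\pss{A}{i_{\uv}^{\,(j)}}=\textrm{tr}(A_{u,v})$ (together with the row/column bookkeeping for the blocks) which the paper leaves implicit.
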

\par\smallskip
The proof of Proposition \ref{Proposition 5.1} relies on the following straightforward
lemma:
\begin{lemma}\label{Lemma 5.3}
 The space $\EE_{\pi }$ consists of the operators $T\in\emph{HS}(H)$ such 
that 
\begin{enumerate}
 \item [--] for every $k,l\in J$ with $k\neq l$, 
$\ti{T}_{k,\,l}=0$;
\item[--] for every $k\in J$ and every $u,v\in I_{k}$, there exists a 
complex number $\lambda _{\uv} $ such that $T_{\uv} =\lambda _{\uv} 
\,i_{\uv}^{\,(k)}$. Thus $\ti{T}_{k,\,k}=\bigl(\lambda _{\uv} 
\,
i_{\uv}^{\,(k)}
\bigr)_{u,\,v\,\in I_{k}}$.
\end{enumerate}
\end{lemma}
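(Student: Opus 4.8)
The plan is to reduce the assertion to a blockwise application of Schur's lemma with respect to the decomposition $H=\opl_{\jnj}\bigl(\opl_{\iij}H\iijj\bigr)$ introduced in Section \ref{Section 4.1}.

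First I would record the elementary remark that, for every $\jnj$ and every $\iij$, the subspace $H\iijj$ is $G$-invariant, hence the orthogonal projection of $H$ onto $H\iijj$ commutes with $\pi(g)$ for every $g\in G$, and, under the identification $H\iijj=K_{j}$ fixed above, the restriction of $\pi(g)$ to $H\iijj$ is exactly $\pi_{j}(g)$. Given $T\in HS(H)$, conjugating the relation $\pi(g)T=T\pi(g)$ by these projections shows that $T$ belongs to $\EE_{\pi}$ if and only if each block $T_{\uv}$ of $T$ -- regarded, for $u\in I_{k}$ and $v\in I_{l}$, as an operator from $K_{k}$ to $K_{l}$ (automatically bounded, since the spaces $K_{j}$ are finite dimensional) -- satisfies
\[
T_{\uv}\,\pi_{k}(g)=\pi_{l}(g)\,T_{\uv}\qquad\textrm{for every }g\in G.
\]

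Next I would invoke Schur's lemma. If $k\neq l$, the representations $\pi_{k}$ and $\pi_{l}$ are irreducible and inequivalent, so the only solution of the above intertwining relation is $T_{\uv}=0$; since this holds for all $u\in I_{k}$ and $v\in I_{l}$, the coarser block $\ti{T}_{k,l}$ vanishes. If $k=l$, then $T_{\uv}$ intertwines the irreducible representation $\pi_{k}$ with itself, hence equals $\lambda_{\uv}\,\mathrm{Id}_{K_{k}}$ for some scalar $\lambda_{\uv}\in\C$; read through the copies this is precisely $T_{\uv}=\lambda_{\uv}\,i_{\uv}^{(k)}$, so that $\ti{T}_{k,k}=(\lambda_{\uv}\,i_{\uv}^{(k)})_{u,v\in I_{k}}$. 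This yields one inclusion. The converse is immediate: if $T\in HS(H)$ has the stated block form, then each block is either zero or a scalar multiple of $i_{\uv}^{(k)}$, and $i_{\uv}^{(k)}$ obviously intertwines $\pi_{k}$ with itself, so all the displayed relations hold and therefore $\pi(g)T=T\pi(g)$ for every $g\in G$. No convergence issue arises, since $T$ is fixed in $HS(H)$ from the start and its block decomposition is just the matrix of $T$ relative to the chosen orthogonal decomposition of $H$.

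I do not expect any serious obstacle here; the only points that require a little care are the bookkeeping with the two layers of indexing -- by equivalence classes $j\in J$ and by copies $i\in I_{j}$ -- and the use of the normalization $\pi\iijj=\pi_{j}$ adopted in Section \ref{Section 4.1}, which is exactly what guarantees that Schur's lemma produces genuine scalars $\lambda_{\uv}$ between the copies and hence makes the formula of Proposition \ref{Proposition 5.1} meaningful.
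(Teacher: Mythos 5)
Your argument is correct and is essentially the paper's own proof: decompose $T$ into blocks, observe that $T\in\EE_{\pi}$ exactly when each block intertwines $\pi_{k}$ with $\pi_{l}$, and apply Schur's Lemma to kill the off-diagonal blocks (inequivalent irreducibles) and to scalarize the diagonal ones, the converse being immediate. The only difference is a harmless choice of convention for which copy each block $T_{\uv}$ maps from and to.
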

\begin{proof}[Proof of Lemma \ref{Lemma 5.3}]
Let $T\in \EE_{\pi }$. For every $k,l\in J$, $u\in I_{k}$ and 
$v\in I
_{l}$,
$\pi 
_{k}(g)\,T_{\uv} =T_{\uv} \,\pi _{l}(g)$ for every $g\in G$.
Thus the operator 
${T_{\uv}}$ intertwines the two representations $\pi _{k}$ and $\pi _{l}$. 
If $T_{\uv} $ is non-zero, it follows from Schur's Lemma 
that $T_{\uv} $ is an isomorphism. The representations $\pi _{k}$ and
$\pi _{l}$ are thus isomorphically (and hence unitarily) equivalent. Since 
$\pi _{k}$ and $\pi _{l}$ are not equivalent for $k\neq l$, it follows 
that $T_{\uv} =0$ in this case.
If now $k=l$, Schur's Lemma again implies that $T_{\uv}=\lambda
_{\uv}\,i_{\uv}^{\,(k)}$ for some scalar $\lambda _{\uv}$. Thus any operator $T\in
\EE_{\pi }$ satisfies the two conditions of the lemma. The converse is 
obvious.
\end{proof}

The proof of Proposition \ref{Proposition 5.1} is now easy.

\begin{proof}[Proof of Proposition \ref{Proposition 5.1}]
Consider, for every $\jnj$ and $u,v\in I_{j}$, the 
one-dimensional 
subspace $\EE_{\uv}^{\,(j)}$ of $HS(H)$ spanned by the operator
$i_{\uv}^{\,(j)}$. 
These subspaces are pairwise orthogonal in \mbox{HS$(H)$}, and by Lemma 
\ref{Lemma 5.3} we have
\[
\EE_{\pi }=\opl\limits_{\jnj}\,\bigl(\opl\limits_{\uv\,\in I_{j}}\EE_{\uv}^{\,(j)} 
\bigr).
\]
Hence, for every $A\in HS(H)$,
\[
 \PPP_{\pi }A=\sum_{\jnj}\,\sum_{\uv\,\in I_{j}}\Bigl\langle 
A,\dfrac{i_{\uv}^{\,(j)}}{\,\,||i_{\uv}^{\,(j)}||_{HS}}\Bigr\rangle 
\,\dfrac{i_{\uv}^{\,(j)}}{\,\,||i_{\uv}^{\,(j)}||_{HS}}=\sum_{\jnj}\,\,\dfrac{1}{d_{j}}\sum_{\uv\,
\in I_{j}}\textrm{tr}
\bigl(A_{u,\,v}^{} 
\bigr)\,i_{\uv}^{\,(j)},
\]
which gives the two formulas we were looking for. 
\end{proof}
\par\smallskip
\begin{corollary}\label{Corollary 5.4}
 Let $\pi 
=\opl\limits_{\jnj}\bigl(\opl\limits_{\iij}\pi_{j}
\bigr)$ be a compact representation of $G$ on $H$. Let
$x 
=\opl\limits_{\jnj}\bigl(\opl\limits_{\iij}x \iijj 
\bigr)$ and $y 
=\opl\limits_{\jnj}\bigl(\opl\limits_{\iij}y \iijj 
\bigr)$ be two vectors of $H$, and let $A\in\mbox{HS$(H)$}$ be  the 
rank-one 
operator $\pss{\,\centerdot\,}{y}\,x$. Then
\[
\PPP_{\pi }A=\sum_{\jnj}\,\,\dfrac{1}{d_{j}}\sum_{\uv\,\in I_{j}}
\pss{x_{u,\,j}}{y_{v,\,j}}\,i_{\uv}^{\,(j)}.
\]
\end{corollary}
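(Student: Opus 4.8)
The plan is to deduce Corollary~\ref{Corollary 5.4} directly from Proposition~\ref{Proposition 5.1} by computing the blocks of the rank-one operator $A=\pss{\,\cdot\,}{y}\,x$: the only quantity entering the formula of Proposition~\ref{Proposition 5.1} is $\mathrm{tr}(A_{\uv})$ for $u,v\in I_{j}$, so it suffices to check that this trace equals $\pss{x_{u,j}}{y_{v,j}}$.

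First I would identify the block $A_{\uv}$. Writing a generic vector of $H$ as $\xi=\opl_{k\in J}\bigl(\opl_{i\in I_{k}}\xi_{i,k}\bigr)$, one has $A\xi=\pss{\xi}{y}\,x$ with $\pss{\xi}{y}=\sum_{k\in J}\sum_{i\in I_{k}}\pss{\xi_{i,k}}{y_{i,k}}$; projecting $A\xi$ onto the component $H_{u,j}$ of $H$ then shows that, for $u\in I_{j}$ and $v\in I_{l}$, the block $A_{\uv}\colon H_{v,l}\to H_{u,j}$ is the rank-one operator $\zeta\longmapsto\pss{\zeta}{y_{v,l}}\,x_{u,j}$. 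In particular $A$ has rank one, hence belongs to $HS(H)$, so Proposition~\ref{Proposition 5.1} applies.

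It then remains to take traces. When $l=j$, regarding $A_{\uv}$ as an operator on $K_{j}$ (recall that $H_{u,j}=H_{v,j}=K_{j}$ as Hilbert spaces), it is the rank-one operator $\pss{\,\cdot\,}{y_{v,j}}\,x_{u,j}$, whose trace equals $\pss{x_{u,j}}{y_{v,j}}$ (the trace of $\pss{\,\cdot\,}{b}\,a$ being $\pss{a}{b}$, as a one-line computation with an orthonormal basis shows). Substituting $\mathrm{tr}(A_{\uv})=\pss{x_{u,j}}{y_{v,j}}$ into the first formula of Proposition~\ref{Proposition 5.1} yields the announced expression for $\PPP_{\pi}A$.

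There is no real obstacle here; the argument is entirely routine. The only point to keep track of is the row/column convention in the block decomposition of $A$, but since the outer sum in Proposition~\ref{Proposition 5.1} runs symmetrically over all $u,v\in I_{j}$ (and the trace only ever involves blocks $A_{\uv}$ with $u,v$ in the same $I_{j}$), this convention has no effect on the final formula.
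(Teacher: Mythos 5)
Your proposal is correct and follows exactly the paper's own route: identify the blocks $A_{\uv}=\pss{\,\cdot\,}{y_{v,\,j}}\,x_{u,\,j}$, compute $\mathrm{tr}(A_{\uv})=\pss{x_{u,\,j}}{y_{v,\,j}}$, and substitute into the formula of Proposition~\ref{Proposition 5.1}. The extra details you supply (the explicit projection argument and the trace of a rank-one operator) are just an expanded version of the same one-line computation.
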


\begin{proof}
 For every $\jnj$ and $u,v\in I_{j}$, 
$A_{\uv}=\pss{\,\centerdot\,}{y_{v,\,j}}\,x_{u,\,j}$, so that 
$
\textrm{tr}\bigl(A_{\uv} \bigr)=
\pss{x_{u,\,j}}{y_{v,\,j}}.
$
The result then follows from Proposition \ref{Proposition 5.1}.
\end{proof}
\subsection{An abstract version of the Wiener Theorem.}\label{Section 4.3}
As recalled in Section \ref{Section 3}, $\EE_{\pi }$ is the space 
of $G$-invariant vectors for the representation $\pi \oti\ba{\pi }$ on 
$HS(H)$, where for every $x,y\in H$, $x\oti\ba{y}$ is identified with the rank-one 
operator $\pss{\,\centerdot\,}{y}\,x$. For every pair 
$(x,y)$ of vectors of $H$, denote by $\pmb{b}_{x,\,y}$ the element of 
$K\oti K$, with
$K=\opl\limits_{\jnj}K_{j}$, defined by 
\[
\pmb{b}_{x,\,y}=\sum_{\jnj}\,\dfrac{1}{\sqrt{d_{j}}}\,\Bigl(\,\sum_{\iij} 
x\iijj\oti\ba{y}\iijj\Bigr).
\]
It should be pointed out that for a fixed index $j\in J$ the vectors 
$x\iijj$ and $y\iijj$ are 
understood in the formula above as belonging to the same space $K_{j}$ 
(and not to the various orthogonal spaces $H\iijj$). So $\pmb{b}_{x,\,y}$ 
is a 
vector of $K\oti K$, not of $H\oti H$. Thus
\[
||\pmb{b}_{x,\,y}||^{2}=\sum_{\jnj}\,\dfrac{1}{{d_{j}}} \sum_{\uv\,\in I_{j}}
\pss{x_{u,\,j}}{x_{v,\,j}}\,\ba{\pss{y_{u,\,j}}{y_{v,\,j}}}.
\]
Combining Corollary \ref{Corollary 5.4} with the formula
\[
m\bigl(|\pss{\pi (\,\centerdot\,)x}{y}|^{2} \bigr)=\pss{P_{\pi
\oti\,\ba{\vphantom{t}\pi }}\,\,x\oti\ba{x}}{y\oti\ba{y}\,}=\pss{\PPP_{\pi 
}\pss{\,\centerdot\,}{x}x}{\pss
{\,\centerdot\,}{y}{y}}
\]
yields
\begin{corollary}\label{Corollary 5.5}
 Let $\pi 
=\opl\limits_{\jnj}\bigl(\opl\limits_{\iij}\pi_{j}
\bigr)$ be a compact representation of $G$ on $H$.
For every vectors 
$x 
=\opl\limits_{\jnj}\bigl(\opl\limits_{\iij}x \iijj 
\bigr)$ and $y 
=\opl\limits_{\jnj}\bigl(\opl\limits_{\iij}y \iijj 
\bigr)$ of $H$, we have
\begin{align}\label{Eq3bis}
  m\bigl(|\pss{\pi (\,\centerdot\,)x}{y}|^{2} 
\bigr)&=\sum_{\jnj} \dfrac{1}{{d_{j}}} \sum_{\uv\,
 \in I_{j}}\pss{x_{u,\,j}}{x_{v,\,j}}\,.\,
 \ba{\pss{y_{u,\,j}}{y_{v,\,j}}}
 =||\pmb{b}_{\,x,\,y}||^{2}.
\end{align}
\end{corollary}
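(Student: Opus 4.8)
The plan is to combine the displayed identity
\[
m\bigl(|\pss{\pi(\centerdot)x}{y}|^{2}\bigr)=\pss{\PPP_{\pi}\pss{\centerdot}{x}x}{\pss{\centerdot}{y}y}
\]
recorded just before the statement with the explicit description of $\PPP_{\pi}$ given by Corollary~\ref{Corollary 5.4}. (That identity itself comes from the abstract ergodic theorem of Section~\ref{Section 3} applied to the representation $\pi\oti\ba{\pi}$, using $|\pss{\pi(g)x}{y}|^{2}=\pss{(\pi\oti\ba{\pi})(g)(x\oti\ba{x})}{y\oti\ba{y}}$ for all $g\in G$, and then transporting $x\oti\ba{x}$ and $y\oti\ba{y}$ to the rank-one operators $\pss{\centerdot}{x}x$, $\pss{\centerdot}{y}y$ and $P_{\pi\oti\ba{\pi}}$ to $\PPP_{\pi}$ through the unitary isomorphism $\Theta$ of Section~\ref{Section 3}.)

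Since $\PPP_{\pi}$ is an orthogonal projection I would first rewrite the right-hand side as $\pss{\PPP_{\pi}\pss{\centerdot}{x}x}{\PPP_{\pi}\pss{\centerdot}{y}y}$, and then apply Corollary~\ref{Corollary 5.4} to each of the two ``diagonal'' rank-one operators $\pss{\centerdot}{x}x$ and $\pss{\centerdot}{y}y$, which gives
\[
\PPP_{\pi}\pss{\centerdot}{x}x=\sum_{\jnj}\dfrac{1}{d_{j}}\sum_{\uv\,\in I_{j}}\pss{x_{u,\,j}}{x_{v,\,j}}\,i_{\uv}^{\,(j)},\qquad \PPP_{\pi}\pss{\centerdot}{y}y=\sum_{\jnj}\dfrac{1}{d_{j}}\sum_{\uv\,\in I_{j}}\pss{y_{u,\,j}}{y_{v,\,j}}\,i_{\uv}^{\,(j)}.
\]
It then remains to expand the Hilbert--Schmidt scalar product of these two sums. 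Here one uses that the operators $i_{\uv}^{\,(j)}$ carry pairwise distinct summands $H_{u,\,j}$ onto pairwise distinct summands $H_{v,\,j}$, so that $\pss{i_{\uv}^{\,(j)}}{i_{u',\,v'}^{\,(j')}}=0$ unless $(j,u,v)=(j',u',v')$, together with $||i_{\uv}^{\,(j)}||_{HS}^{2}=\dim K_{j}=d_{j}$. Only the ``diagonal'' terms survive, and one is left with $\sum_{\jnj}d_{j}^{-2}\bigl(\sum_{\uv\,\in I_{j}}\pss{x_{u,\,j}}{x_{v,\,j}}\,\ba{\pss{y_{u,\,j}}{y_{v,\,j}}}\bigr)d_{j}$, which is exactly the middle expression of the claimed identity; the last equality is then immediate from the formula for $||\pmb{b}_{x,\,y}||^{2}$ displayed just above the statement.

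The argument is entirely routine, so I do not expect a real obstacle; the only thing demanding care is the bookkeeping. One must remember that the spaces $H_{u,\,j}$, $u\in I_{j}$, are to be regarded as one and the same space $K_{j}$, in which the inner products $\pss{x_{u,\,j}}{x_{v,\,j}}$ and $\pss{y_{u,\,j}}{y_{v,\,j}}$ are evaluated, that in applying Corollary~\ref{Corollary 5.4} the two vectors defining each rank-one operator coincide, and that the complex conjugate on $\pss{y_{u,\,j}}{y_{v,\,j}}$ is produced by the antilinearity of the Hilbert--Schmidt inner product in its second variable --- a conjugation or indexing slip there being the most likely source of error.
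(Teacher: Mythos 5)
Your proof is correct and takes essentially the same route as the paper: the authors obtain Corollary~\ref{Corollary 5.5} precisely by combining the identity $m\bigl(|\pss{\pi (\,\centerdot\,)x}{y}|^{2}\bigr)=\pss{\PPP_{\pi }\pss{\,\centerdot\,}{x}x}{\pss{\,\centerdot\,}{y}y}$ with Corollary~\ref{Corollary 5.4}, exactly as you do. Your explicit expansion of the Hilbert--Schmidt inner product (pairwise orthogonality of the $i_{\uv}^{\,(j)}$ and $||i_{\uv}^{\,(j)}||_{HS}^{2}=d_{j}$) just supplies the bookkeeping that the paper leaves implicit, and it is carried out correctly.
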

We thus obtain the following abstract version of the Wiener Theorem for 
unitary representations of a group $G$:
\begin{theorem}\label{Theorem 5.6}
 Let $\pi =\pi _{w}\opl \pi _{c}$ be a unitary representation of $G$ on a Hilbert 
space $H=H_{w}\opl H_{c}$, where 
$\pi_{w}$ is the weakly mixing part of $\pi $ and $\pi 
_{c}$ its compact part. Writing $\pi 
_{c}=\opl\limits_{\jnj}\bigl(\opl\limits_{\iij}\pi_{j}
\bigr)$ as  above, we have for every 
vectors $x=x_{w}\opl x_{c}$ and $y=y_{w}\opl y_{c}$ of $H$
\begin{equation}\label{Eq4}
 m\bigl(|\pss{\pi (\,\centerdot\,)x}{y}|^{2} 
\bigr)=||\,\pmb{b}_{\,x_{c},\,y_{c}}||^{2}.
\end{equation}
\end{theorem}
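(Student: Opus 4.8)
The plan is to deduce Theorem \ref{Theorem 5.6} from Proposition \ref{Proposition 4.2} and Corollary \ref{Corollary 5.5} by a straightforward decomposition argument. First I would apply Proposition \ref{Proposition 4.2} to write $H=H_{w}\opl^{\perp}H_{c}$ with $\pi_{w}=\pi\vert_{H_{w}}$ weakly mixing and $\pi_{c}=\pi\vert_{H_{c}}$ compact, both subspaces being $G$-invariant and closed. Given arbitrary vectors $x,y\in H$, write $x=x_{w}\opl x_{c}$ and $y=y_{w}\opl y_{c}$ according to this orthogonal decomposition. The goal is to show that only the compact parts $x_{c},y_{c}$ contribute to $m(|\pss{\pi(\,\centerdot\,)x}{y}|^{2})$, and that this contribution is exactly $||\pmb{b}_{x_{c},y_{c}}||^{2}$ as computed in Corollary \ref{Corollary 5.5}.

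The key computational step is to expand the matrix coefficient. Since $H_{w}$ and $H_{c}$ are $\pi$-invariant and mutually orthogonal, for every $g\in G$ we have $\pss{\pi(g)x}{y}=\pss{\pi_{w}(g)x_{w}}{y_{w}}+\pss{\pi_{c}(g)x_{c}}{y_{c}}$; there are no cross terms. Writing $\phi(g)=\pss{\pi_{w}(g)x_{w}}{y_{w}}$ and $\psi(g)=\pss{\pi_{c}(g)x_{c}}{y_{c}}$, one has $|\pss{\pi(g)x}{y}|^{2}=|\phi(g)|^{2}+|\psi(g)|^{2}+2\,\mathrm{Re}\,(\phi(g)\ba{\psi(g)})$. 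All three functions on the right belong to \mbox{WAP$(G)$} (as noted in Section \ref{Section 3}), so I may apply the invariant mean $m$ term by term, using its linearity and the fact that $m$ is real on real-valued functions. Then I would argue that $m(|\phi|^{2})=0$ because $\pi_{w}$ is weakly mixing (this is exactly the definition: $m(|\pss{\pi_{w}(\,\centerdot\,)x_{w}}{y_{w}}|^{2})=0$ for all $x_{w},y_{w}\in H_{w}$), and that the cross term $m(\phi\ba{\psi})$ vanishes too. For the cross term, since $\pi=\pi_{w}\opl\pi_{c}$ and $\pi\oti\ba{\pi}$ decomposes accordingly, $\phi(g)\ba{\psi(g)}$ is a matrix coefficient of $\pi_{w}\oti\ba{\pi_{c}}$, which contains $\pi_{w}$ as a tensor factor and hence is itself weakly mixing (a tensor product having a weakly mixing factor is weakly mixing, so it has no finite dimensional subrepresentation and, by the abstract ergodic theorem, all its matrix coefficients have zero mean). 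Alternatively, one can estimate directly: $|m(\phi\ba{\psi})|\le m(|\phi|)^{1/2}m(|\psi|^{2}\cdot|\phi|)^{1/2}$-type bounds collapse since $m(|\phi|)=0$ by weak mixing of $\pi_{w}$, using $m(|\phi|\cdot h)=0$ for $h$ bounded. Either route gives $m(\phi\ba{\psi})=0$.

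Collecting these, $m(|\pss{\pi(\,\centerdot\,)x}{y}|^{2})=m(|\psi|^{2})=m(|\pss{\pi_{c}(\,\centerdot\,)x_{c}}{y_{c}}|^{2})$, and Corollary \ref{Corollary 5.5} applied to the compact representation $\pi_{c}=\opl_{\jnj}(\opl_{\iij}\pi_{j})$ and the vectors $x_{c},y_{c}$ identifies this last quantity with $||\pmb{b}_{x_{c},y_{c}}||^{2}$, which is formula \eqref{Eq4}. The main obstacle, and the only point requiring genuine care, is the vanishing of the cross term $m(\phi\ba{\psi})$: one must justify cleanly that a matrix coefficient of a tensor product of a weakly mixing representation with another representation has zero invariant mean. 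The cleanest justification is the chain ``$\pi_{w}$ weakly mixing $\Rightarrow$ $\pi_{w}$ has no finite dimensional subrepresentation (Proposition \ref{Proposition 4.1}) $\Rightarrow$ $\pi_{w}\oti\ba{\pi_{c}}$ has no non-zero $G$-invariant vector $\Rightarrow$ its matrix coefficients have zero mean by the abstract ergodic theorem'', where the middle implication uses that $E_{\pi_{w}\oti\ba{\pi_{c}}}$ corresponds, via the Hilbert-Schmidt identification, to operators intertwining $\pi_{c}$ with $\pi_{w}$ whose sources and ranges would force finite dimensional pieces of $\pi_{w}$ — none exist. Everything else is bookkeeping.
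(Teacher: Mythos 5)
Your proof is correct and follows essentially the same route as the paper's: decompose $H=H_{w}\opl^{\perp}H_{c}$, kill the weakly mixing contribution, and apply Corollary \ref{Corollary 5.5} to $\pi_{c}$. The only difference is that the paper simply asserts the additivity $m(|\pss{\pi (\,\centerdot\,)x}{y}|^{2})=m(|\pss{\pi_{w} (\,\centerdot\,)x_{w}}{y_{w}}|^{2})+m(|\pss{\pi_{c} (\,\centerdot\,)x_{c}}{y_{c}}|^{2})$, whereas you explicitly justify the vanishing of the cross term $m(\phi\ba{\psi})$ (either via the absence of invariant vectors in $\pi_{w}\oti\ba{\pi_{c}}$, or via $|m(\phi\ba{\psi})|\le \|\psi\|_{\infty}\, m(|\phi|)=0$ using positivity of the mean), which is a worthwhile detail to record.
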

\begin{proof}
 As we have
$
m(|\pss{\pi (\,\centerdot\,)x}{y}|^{2})=
m\bigl(|\pss{\pi_{w} (\,\centerdot\,)x_{w}}{y_{w}}|^{2} 
\bigr)+m\bigl(|\pss{\pi_{c} (\,\centerdot\,)x_{c}}{y_{c}}|^{2} 
\bigr)
$ and
$m(|\pss{\pi_{w} (\,\centerdot\,)x_{w}}{y_{w}}|^{2})=0$,
this follows from Corollary \ref{Corollary 5.5}.
\end{proof}
\par 
We finally derive an
inequality on the quantities $m\bigl(|\pss{\pi (\,\centerdot\,)x}{y}|^{2} 
\bigr)$ for a compact representation $\pi$, which is a direct consequence of Corollary \ref{Corollary 5.5}.
This inequality will be a crucial tool for the proof of our main 
result, to be given in Section \ref{Section 5}.
Using the same notation as in the statement of Corollary \ref{Corollary 5.5}, we
denote by $x=\opl_{\jnj}\ti{x}_{j}$ and $y=\opl_{\jnj}\ti{y}_{j}$ the 
respective decompositions of the vectors $x$ and $y$ of $H$ with respect 
to the decomposition $H=\opl_{\jnj}\ti{H}_{j}$ of $H$. 
Applying the Cauchy-Schwarz inequality twice to (\ref{Eq3bis}) yields the following inequalities:

\begin{corollary}\label{Corollary 5.7}
Let $\pi$ be a compact representation of $G$ on $H$.
 For every 
vectors $x$ and $y$ of $H$, we have
\[
m\bigl(|\pss{\pi (\,\centerdot\,)x}{y}|^{2} 
\bigr)\le\sum_{\jnj}\,\dfrac{1}{{d_{j}}}\, ||\ti{x}_{j}||^{2}.\,||\ti{y}_{j}||^{2}\le\sum_{\jnj}\, ||\ti{x}_{j}||^{2}.\,||\ti{y}_{j}||^{2}.
\]
\end{corollary}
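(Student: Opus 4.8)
\medskip
\noindent\textbf{Proof plan.}
The starting point is the exact identity \eqref{Eq3bis} of Corollary \ref{Corollary 5.5},
\[
m\bigl(|\pss{\pi (\,\centerdot\,)x}{y}|^{2}\bigr)=\sum_{\jnj}\dfrac{1}{d_{j}}\,S_{j},
\qquad\textrm{where}\qquad
S_{j}:=\sum_{\uv\,\in I_{j}}\pss{x_{u,\,j}}{x_{v,\,j}}\;\ba{\pss{y_{u,\,j}}{y_{v,\,j}}}.
\]
The plan is to bound each nonnegative quantity $S_{j}$ by $||\ti x_{j}||^{2}\,||\ti y_{j}||^{2}$ and then to sum over $\jnj$ against the weights $1/d_{j}$, using that $d_{j}\ge 1$ for every $j$ to pass from the first asserted inequality to the second. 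Throughout one uses that $\ti x_{j}=\opl_{\iij}x\iijj$ and $\ti y_{j}=\opl_{\iij}y\iijj$ are the components of $x$ and $y$ in $\ti H_{j}=\opl_{\iij}H\iijj$, so that $\sum_{\iij}||x\iijj||^{2}=||\ti x_{j}||^{2}$ and $\sum_{\iij}||y\iijj||^{2}=||\ti y_{j}||^{2}$.

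For the bound on $S_{j}$ I would invoke the Cauchy--Schwarz inequality twice. First, applying it to each of the scalar products $\pss{x_{u,\,j}}{x_{v,\,j}}$ and $\pss{y_{u,\,j}}{y_{v,\,j}}$ gives
\[
S_{j}\le\sum_{\uv\,\in I_{j}}||x_{u,\,j}||\,||x_{v,\,j}||\,||y_{u,\,j}||\,||y_{v,\,j}||=\Bigl(\sum_{\iij}||x\iijj||\;||y\iijj||\Bigr)^{2},
\]
the factorization of the double sum being legitimate since all the sums in sight are dominated by $||\ti x_{j}||^{2}\,||\ti y_{j}||^{2}<\infty$ and hence converge absolutely. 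Second, applying Cauchy--Schwarz once more, now for the $\ell^{2}$-pairing over the index set $I_{j}$, gives
\[
\sum_{\iij}||x\iijj||\;||y\iijj||\le\Bigl(\sum_{\iij}||x\iijj||^{2}\Bigr)^{1/2}\Bigl(\sum_{\iij}||y\iijj||^{2}\Bigr)^{1/2}=||\ti x_{j}||\;||\ti y_{j}||.
\]
Combining the two displays yields $S_{j}\le||\ti x_{j}||^{2}\,||\ti y_{j}||^{2}$ for every $\jnj$, whence
\[
m\bigl(|\pss{\pi (\,\centerdot\,)x}{y}|^{2}\bigr)=\sum_{\jnj}\dfrac{1}{d_{j}}\,S_{j}\le\sum_{\jnj}\dfrac{1}{d_{j}}\,||\ti x_{j}||^{2}\,||\ti y_{j}||^{2}\le\sum_{\jnj}||\ti x_{j}||^{2}\,||\ti y_{j}||^{2},
\]
which is exactly the claimed chain of inequalities.

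I do not expect any genuine obstacle here: once formula \eqref{Eq3bis} is in hand the argument is purely elementary. The only point requiring a little care is the bookkeeping between the two decompositions $H=\opl_{\jnj}\bigl(\opl_{\iij}H\iijj\bigr)$ and $H=\opl_{\jnj}\ti H_{j}$ of $H$ — in particular the fact that within a fixed block $j$ the vectors $x_{u,\,j}$, $u\in I_{j}$, are precisely the components of $\ti x_{j}$, so that the multiplicity factors $1/d_{j}$ and the norms $||\ti x_{j}||$, $||\ti y_{j}||$ produced by the estimate coincide with those appearing in the statement.
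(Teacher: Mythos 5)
Your argument is correct and is exactly the paper's intended proof: the paper derives Corollary \ref{Corollary 5.7} by ``applying the Cauchy--Schwarz inequality twice to (\ref{Eq3bis})'', which is precisely your two-step estimate bounding each block sum $S_{j}$ by $||\ti{x}_{j}||^{2}\,||\ti{y}_{j}||^{2}$ and then using $d_{j}\ge 1$. Nothing to add.
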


\subsection{Why is (\ref{Eq4}) an abstract version of the Wiener 
Theorem?}\label{Section 4.4}
Theorem \ref{Theorem 5.6} admits a 
much simpler formulation in the case where $G$ is an abelian group.
If $\pi$ is a compact representation of $G$, the formula 
(\ref{Eq3bis}) becomes 
\[
m\bigl(|\pss{\pi (\,\centerdot\,)x}{y}|^{2} 
\bigr)=\sum_{\jnj}\
\sum_{\uv\,\in\,I_{j}}x_{u,\,j}\,\ba{x}_{v,\,j}\,\ba{y}_{u,\,j}
y_{v,\,j}
\]
where $x_{i,\,j}$ and $y_{i,\,j}$, $i\in I_{j}$, $\jnj$, are simply 
scalars.
Using the notation of Corollary \ref{Corollary 5.7}, we have
\begin{equation}\label{Eq6}
m\bigl(|\pss{\pi (\,\centerdot\,)x}{y}|^{2} 
\bigr)=\sum_{\jnj}\,\Bigl|\sum_{u\,\in\,I_{j}}x_{u,\,j}\,\ba{y}_{u,\,j}
\Bigr|^{2}= 
\sum_{\jnj}\,\bigl|\pss{\ti{x}_{j}}{\ti{y}_{j}}\bigr|^{2}\!.
\end{equation}
For every character $\chi \in\Gamma $ (where $\Gamma $ denotes the 
dual 
group of $G$), we denote by $E_{\chi }$ the subspace of $H$
\[
E_{\chi }=\{x\in H\,;\,\pi (g)x=\chi (g)x\ \textrm{for every}\ g\in G\}
\]
and by $P_{\chi }$ 
the orthogonal projection of $H$ on $E_{\chi}$.
Each representation $\pi _{j}$, $j\in J$, being in fact a character $\chi 
_{j}$ on the group $G$, we can identify the space $\ti{H}_{j}$ with 
$E_{\chi_{j} }$. Equation (\ref{Eq6}) then yields the following corollary:

\begin{corollary}\label{cor+}
 Let $G$ be an abelian group, and let $\pi$ be a representation of $G$ on a Hilbert space $H$. Then we have for every $x,y\in H$
 \[
m\bigl(|\pss{\pi (\,\centerdot\,)x}{y}|^{2} 
\bigr)=\sum_{\jnj}\,\bigl|\pss{P_{E_{\chi _{j}}}x}{P_{E_{\chi _{j}}}y}
\bigr|^{2}= 
\sum_{\chi\,\in\,\Gamma }\,\bigl|\pss{P_{E_{\chi}}x}{P_{E_{\chi 
}}y}\bigr|^{2} \!.
\]
In particular, if $x=y$,
\[
m\bigl(|\pss{\pi (\,\centerdot\,)x}{x}|^{2} 
\bigr)=\sum_{\chi\,\in\,\Gamma }\,\bigl|\bigl|P_{E_{\chi 
}}x\bigr|\bigr|^{4}\!.
\]
\end{corollary}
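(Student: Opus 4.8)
The plan is to read off Corollary \ref{cor+} from Theorem \ref{Theorem 5.6}, specialized to the abelian case via equation (\ref{Eq6}), once we have identified the spaces $\ti H_j$ appearing there with the character eigenspaces $E_{\chi_j}$ of $\pi$ itself.

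First I would invoke Proposition \ref{Proposition 4.2} to split $\pi=\pi_w\opl\pi_c$, with $\pi_w$ weakly mixing and $\pi_c$ compact, and decompose $\pi_c=\opl_{\jnj}\bigl(\opl_{\iij}\pi_j\bigr)$ as in Section \ref{Section 4.1}. Since $G$ is abelian, each irreducible $\pi_j$ is one-dimensional, i.e. a character $\chi_j$, so $d_j=1$ and $\ti H_j=\opl_{\iij}H\iijj$ is exactly the subspace of $H_c$ on which $\pi_c$ acts by the scalar $\chi_j(g)$. Equation (\ref{Eq6}), which is the translation of (\ref{Eq4}) to the abelian setting, then gives
\[
m\bigl(|\pss{\pi (\,\centerdot\,)x}{y}|^{2}\bigr)=\sum_{\jnj}\bigl|\pss{\ti{x}_{j}}{\ti{y}_{j}}\bigr|^{2},
\]
where $\ti x_j$ and $\ti y_j$ are the components of $x_c$ and $y_c$ (equivalently, of $x$ and $y$) in $\ti H_j$.

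The key step is to show $\ti H_j=E_{\chi_j}$, which then turns $\ti x_j$ into $P_{E_{\chi_j}}x$ and yields the first equality. The inclusion $\ti H_j\subseteq E_{\chi_j}$ is immediate from the construction. For the reverse one I would note that $E_{\chi_j}$ cannot meet $H_w$ nontrivially: a nonzero vector in $E_{\chi_j}\cap H_w$ would span a one-dimensional $\pi_w$-invariant subspace, hence a finite dimensional subrepresentation of $\pi_w$, contradicting weak mixing by Proposition \ref{Proposition 4.1}. Inside $H_c$, any $\chi_j$-eigenvector lies in the $\chi_j$-isotypic component $\ti H_j$ by uniqueness of the isotypic decomposition (the $\chi_j$ being pairwise inequivalent). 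Extending the sum to all of $\Gamma$ costs nothing: for $\chi$ not among the $\chi_j$, the same Proposition \ref{Proposition 4.1} argument forces $E_\chi=\{0\}$ (an eigenvector would give a one-dimensional subrepresentation of $\pi$ equivalent to $\chi$; projecting onto $H_c$ would make $\chi$ occur among the $\chi_j$), so those terms vanish. Setting $x=y$ replaces $|\pss{P_{E_{\chi}}x}{P_{E_{\chi}}x}|^2$ by $||P_{E_{\chi}}x||^4$, giving the last formula.

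The only delicate point — the step I would write out with care — is precisely the identification $\ti H_j=E_{\chi_j}$ (equivalently, $E_\chi=\{0\}$ for characters $\chi$ absent from $\pi_c$); this is where the equivalence ``weakly mixing $\iff$ no finite dimensional subrepresentation'' of Proposition \ref{Proposition 4.1} is essential, as it guarantees that every character eigenvector of $\pi$ is absorbed into the compact part. The rest is bookkeeping with the decomposition of Section \ref{Section 4.1}.
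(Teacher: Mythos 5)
Your proposal is correct and follows essentially the same route as the paper: specialize the abstract Wiener formula (Theorem \ref{Theorem 5.6} / equation (\ref{Eq6})) to the abelian case and identify each isotypic component $\ti{H}_{j}$ with the eigenspace $E_{\chi_{j}}$. The only difference is that you actually justify the identification $\ti{H}_{j}=E_{\chi_{j}}$ and the vanishing of $E_{\chi}$ for characters absent from $\pi_{c}$ (via Proposition \ref{Proposition 4.1}), whereas the paper simply asserts this identification; your added argument is sound.
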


Specializing Corollary \ref{cor+} to the case where $G=\Z$ yields that for any 
unitary operator $U$ on $H$ and any vectors $x,y\in H$,
\newsavebox{\zzz}
\savebox{\zzz}{\smash[b]{\xymatrix@C=13pt{\scriptstyle 
N\ar[r]&\scriptstyle+\infty}}}
\[
\xymatrix@C=60pt{
\dfrac{1}{2N+1}\ds\sum_{n=-N}^{N}\,\bigl|\pss{U^{n}x}{y}\bigr|
^{2}\ar[r]\ar@{}@<.6ex>[r]_-{\usebox{\zzz}}&\ds\sum_{\lambda 
\,\in\,\T}\,\bigl|\pss{P_{\ker(U-\lambda 
\textrm{Id}_{H})}x}{P_{\ker(U-\lambda 
\textrm{Id}_{H})}y}\bigr|^{2}\!.
}
\]
In particular, we have
\newsavebox{\ccc}
\savebox{\ccc}{\smash[b]{\xymatrix@C=13pt{\scriptstyle 
N\ar[r]&\scriptstyle+\infty}}}
\begin{equation}\label{eq:wo}
\xymatrix@C=60pt{
\dfrac{1}{2N+1}\ds\sum_{n=-N}^{N}\,\bigl|\pss{U^{n}x}{x}\bigr|
^{2}\ar[r]\ar@{}@<.6ex>[r]_-{\usebox{\ccc}}&\ds\sum_{\lambda 
\,\in\,\T}\,\bigl|\bigl|P_{\ker(U-\lambda 
\textrm{Id}_{H})}x\bigr|\bigr|^{4}\!.
}
\end{equation}
If $\sigma$ is a probability measure on the unit 
circle $\T$, the operator $M_{\sigma }$ of multiplication by $e^{i\theta }$ on $L^{2}
(\T,\sigma )$ is unitary. Applying \eqref{eq:wo} to $U=M_{\sigma}$ and to $x = 1$, the constant function equal to $1$, we obtain Wiener's Theorem:
\newsavebox{\abb}
\savebox{\abb}{\smash[b]{\xymatrix@C=13pt{\scriptstyle 
N\ar[r]&\scriptstyle+\infty}}}
\begin{equation}\label{Eq5}
 \xymatrix@C=60pt{\dfrac{1}{2N+1}\ds\sum_{n=-N}^{N}|\,\widehat{\sigma 
}(n)|^{2}\ar[r]\ar@{}@<.6ex>[r]_-{\usebox{\abb}}&\ds\sum_{
\lambda\,\in\,\T}\,\sigma (\{\lambda \})^{2}}.
\end{equation}

We refer the reader to \cite{AnBi,Ballo,BalloGold,BjFi,Farkas} and the references therein for  related 
aspects and generalizations of Wiener's theorem. 

\par\medskip
We now have all the necessary tools for the proof of Theorem \ref{Theorem 
0}, which we present in the next section.

\section{Proof of Theorem \ref{Theorem 0}}\label{Section 5}
\subsection{Notation}\label{Section 5.1}
Let $(W_{n})_{\gn}$ be an increasing sequence of subsets of $G$ satisfying the 
assumptions of Theorem \ref{Theorem 0}, and let $\qq$ be a subset of $G$. 
For each $\gn$, we denote by $\qn$ the set $\qn=
W_{n}\cup Q$. Remark that $G$ is the increasing union of the sets 
$\qn$, $\gn$.
We also denote by $\varepsilon _{0}$ a positive constant such 
that assumption (\ref{Pro1}) holds true: any representation of $G$ 
admitting a $(\qq,\varepsilon _{0})$-invariant vector has a 
finite dimensional subrepresentation. 
\par\smallskip 
 In order to prove Theorem \ref{Theorem 0}, we argue by contradiction, and 
suppose that  $Q_{n}$ is a non-\ka\ set in $G$ for every $\gn$. We will 
then 
construct for every $\varepsilon >0$ a representation $\pi $ of $G$ which 
admits a $(Q,\varepsilon )$-invariant vector, but is weakly mixing (which, 
by Proposition \ref{Proposition 4.1}, is equivalent to the fact that $\pi 
$ has no finite dimensional subrepresentation), and this will contradict 
(\ref{Pro1}). 

\subsection{Construction of a sequence $(\pi _{n})_{\gn}$ of 
finite dimensional representations of $G$}\label{Section 5.2}
The first step of the proof is to show that assumption (\ref{Pro1}) 
combined with the 
hypothesis that $\qn$ is a non-\ka\ set for every $\gn$ implies the 
existence of sequences of finite dimensional representations of $G$ with 
certain properties.
\begin{lemma}\label{Lemma 6.1}
Let $\epsilon_0$ be a positive constant such that assumption (*)
holds true and suppose that $Q_n$ is a non-Kazhdan set in G for every $n \ge
1$.
For every sequence $(\varepsilon _{n})_{n\ge 1}$ of positive real numbers 
decreasing to zero with  $\varepsilon _{1}\in(0,\varepsilon _{0}]$, 
there exist a sequence $(H_{n})_{n\ge 1}$ of finite dimensional Hilbert 
spaces and a sequence $(\pi_{n})_{n\ge 1}$ of unitary representations of $G$ such that, for every $n\ge 1$, $\pi_{n}$ is a representation of $G$ on $H_n$ and
\begin{enumerate}
 \item [--] $\pi _{n}$ has no non-zero $G$-invariant vector;
 \item[--] $\pi _{n}$ has a $(\qn,\en)$-invariant unit vector 
$\an\in H_{n}$: $||\an||=1$ and 
\[
\sup_{g\,\in\,\qn}
||\,\pi _{n}(g)\an-\an||<\en.
\]
\end{enumerate}
\end{lemma}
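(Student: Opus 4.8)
The plan is to argue one index $n$ at a time, the point being to \emph{replace} an a priori infinite-dimensional witness of ``$Q_n$ is not Kazhdan'' by a finite-dimensional one. Fix the decreasing sequence $(\varepsilon_n)_{n\ge1}$ tending to $0$ with $\varepsilon_1\in(0,\varepsilon_0]$, and set $\delta_n=\varepsilon_n/2$, so that $(\delta_n)$ decreases to $0$ and $\delta_n\le\varepsilon_0/2$ for all $n$. Since $Q_n$ is not a \ka\ set, the pair $(Q_n,\delta_n)$ is not a \ka\ pair, so there is a unitary representation $\rho_n$ of $G$ on a Hilbert space $\mathcal{H}_n$ which has a $(Q_n,\delta_n)$-invariant unit vector $x_n$ but no non-zero $G$-invariant vector. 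I would then decompose $\rho_n=\rho_{n,w}\opl\rho_{n,c}$ on $\mathcal{H}_n=\mathcal{H}_{n,w}\opl\mathcal{H}_{n,c}$ according to Proposition \ref{Proposition 4.2}, with $\rho_{n,w}$ weakly mixing and $\rho_{n,c}$ compact, and write $x_n=x_{n,w}\opl x_{n,c}$ accordingly.

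The heart of the argument is the estimate $\|x_{n,w}\|<\delta_n/\varepsilon_0\ (\le 1/2)$. Indeed, since $Q\subseteq Q_n$ and the orthogonal projection onto $\mathcal{H}_{n,w}$ commutes with $\rho_n$, one has $\|\rho_{n,w}(g)x_{n,w}-x_{n,w}\|\le\|\rho_n(g)x_n-x_n\|<\delta_n$ for every $g\in Q$. If $\|x_{n,w}\|$ were $\ge\delta_n/\varepsilon_0$, then $x_{n,w}$ would be a $(Q,\varepsilon_0)$-invariant vector for $\rho_{n,w}$, and assumption \eqref{Pro1} would force $\rho_{n,w}$ to have a finite-dimensional subrepresentation, contradicting (via Proposition \ref{Proposition 4.1}) the fact that $\rho_{n,w}$ is weakly mixing. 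Hence $\|x_{n,c}\|^2=1-\|x_{n,w}\|^2>3/4$, so $\|x_{n,c}\|>1/2$; and again by commutation of the projection onto $\mathcal{H}_{n,c}$ with $\rho_n$, $\sup_{g\in Q_n}\|\rho_{n,c}(g)x_{n,c}-x_{n,c}\|\le\sup_{g\in Q_n}\|\rho_n(g)x_n-x_n\|<\delta_n$.

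It then remains to pass to a finite-dimensional piece. Since $\rho_{n,c}$ is compact, Proposition \ref{Proposition 4.2} gives a decomposition $\rho_{n,c}=\opl_{j}\sigma_{n,j}$ into finite-dimensional (irreducible) representations; writing $x_{n,c}=\opl_j y_{n,j}$, I would pick a \emph{finite} set $F$ of indices such that $\|\opl_{j\in F}y_{n,j}\|\ge 1/2$, which is possible because $\sum_j\|y_{n,j}\|^2=\|x_{n,c}\|^2>1/4$. Setting $H_n=\opl_{j\in F}(\text{space of }\sigma_{n,j})$, $\pi_n=\opl_{j\in F}\sigma_{n,j}$, and $a_n=\tilde x_n/\|\tilde x_n\|$ with $\tilde x_n=\opl_{j\in F}y_{n,j}$, one checks that $\pi_n$ is finite-dimensional, has no non-zero $G$-invariant vector (being a subrepresentation of $\rho_n$), and that, since the projection onto $H_n$ commutes with $\rho_{n,c}$, $\sup_{g\in Q_n}\|\pi_n(g)\tilde x_n-\tilde x_n\|\le\sup_{g\in Q_n}\|\rho_{n,c}(g)x_{n,c}-x_{n,c}\|<\delta_n\le\varepsilon_n\|\tilde x_n\|$ (using $\|\tilde x_n\|\ge1/2$ and $\varepsilon_n=2\delta_n$). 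Thus $a_n$ is a $(Q_n,\varepsilon_n)$-invariant unit vector for $\pi_n$, as required.

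I expect the only delicate point to be the estimate of the second paragraph: controlling how much of the approximately invariant vector $x_n$ can hide in the weakly mixing part of $\rho_n$. This is precisely where assumption \eqref{Pro1} is used, in the form ``a weakly mixing representation of $G$ has no $(Q,\varepsilon_0)$-invariant vector'' (Proposition \ref{Proposition 4.1}); the work is to turn this qualitative statement into the quantitative bound $\|x_{n,w}\|<\delta_n/\varepsilon_0$, which makes $x_{n,c}$ a genuinely non-degenerate almost-invariant vector for the compact, hence direct-sum-of-finite-dimensional, part $\rho_{n,c}$. The truncation to finitely many isotypic components is then routine, and the numerical constants are immaterial (one simply chooses $\delta_n$ a small enough fixed multiple of $\varepsilon_n$).
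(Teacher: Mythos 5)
Your proof is correct and follows essentially the same route as the paper's: a witness representation for the failure of the Kazhdan property of $Q_n$, the weakly mixing $\oplus$ compact decomposition of Proposition \ref{Proposition 4.2}, assumption (\ref{Pro1}) combined with Proposition \ref{Proposition 4.1} to show the weakly mixing part cannot carry much of the almost-invariant vector, and a truncation of the compact part to finitely many finite-dimensional summands. The only difference is organizational: by calibrating the initial tolerance to $\delta_n=\varepsilon_n/2$ you get the clean per-index bound $\|x_{n,w}\|<\delta_n/\varepsilon_0\le 1/2$, whereas the paper starts from tolerance $2^{-n}$, proves $\underline{\lim}_{n\to+\infty}\|x_{n,c}\|>0$ by contradiction, and extracts a subsequence at the end; your version is, if anything, slightly tidier.
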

\begin{proof} Let $\gn$. Since $\qn$ is a not a \ka\ set in 
$G$, there exists a representation $\rho _{n}$ of $G$ on a Hilbert space 
$\kn$ which has no non-zero $G$-invariant vector, but is such that there 
exists a unit vector $\xn\in\kn$ with
\[
\sup_{g\,\in\,\qn} ||\,\rho _{n}(g)\xn-\xn||<2^{-n}.
\]
 Since $2^{-n}\le\varepsilon _{0}$ for $n$ large enough, assumption (\ref{Pro1}) 
implies that, for such integers $n$, $\rho _{n}$ has a finite dimensional subrepresentation. 
By Proposition \ref{Proposition 4.1}, $\rho_{n}$ is not weakly mixing. This 
means that if we decompose $K_{n}$ as $K_{n}=K_{\nw }\opl K_{\nc }$ and 
$\rho _{n}$ as $\rho _{n}=\rho  _{\nw }\opl \rho _{\nc }$, where 
$\rho _{n,w}$ and 
$\rho _{n,c}$ are respectively 
the weakly mixing and compact parts of $\pi_n$, $\rho _{\nc}$ is  non-zero. Since $\rho _{n}$ has no non-zero $G$-invariant vector, 
neither have $\rho _{\nw }$ nor $\rho _{\nc }$.
\par\smallskip 
Decomposing $\xn$ as $\xn=x_{\nw }\opl x_{\nc }$, we have
$1=||x_{\nw }||^{2}+||x_{\nc }||^{2}$. We claim that
$\underline{\lim}_{\,n\to+\infty }\,||\,x_{\nc }||>0$. Indeed, suppose 
that it is not the case. Then $\overline{\lim}_{\,n\to+\infty 
}||x_{\nw }||=1$. Since $||\rho _{n}(g)\xn-\xn||^{2}=||\rho 
_{\nw }(g)x_{\nw }-x_{\nw}||^{2}+||\rho 
_{\nc }(g)x_{\nc }-x_{\nc }||^{2}$ for every $g\in G$, we have
\begin{align*}
\sup_{g\,\in\,Q_{n}}\Bigl|\Bigl|\rho 
_{\nw }(g)\dfrac{x_{\nw }}{||x_{\nw }||}-\dfrac{x_{\nw }}{||x_{\nw }||
}\Bigr|\Bigr|<\dfrac{2^{-n}}{||x_{\nw }||} 
\end{align*}
as soon as $x_{\nw }$ is non-zero. Since 
$\overline{\lim}_{\,n\to+\infty }||x_{\nw }||=1$, this implies that for 
any $\delta >0$ there exists an integer $n$ such that $\rho _{\nw }$ has 
a $(\qn,\delta )$-invariant vector of norm $1$. 
Applying this to $\delta =\varepsilon_{0} $, there exists $n_{0}\ge 1$ such that $\rho _{n_{0},\,w}$ has a $(Q_{n_{0}}, \varepsilon _{0})$-invariant vector, hence a $(Q,\varepsilon _{0})$-invariant vector.
But
$\rho _{n_{0},\,w}$ is weakly mixing, so has no 
finite dimensional subrepresentation. This 
contradicts assumption (\ref{Pro1}). So we deduce that
$\underline{\lim}_{\,n\to+\infty }\,||\,x_{\nc }||=\gamma >0$.
The same observation as above, applied to the representation $\rho 
_{\nc }$, shows that
\begin{align*}
 \sup_{g\,\in\,Q_{n}}\Bigl|\Bigl|\rho 
_{\nc }(g)\dfrac{x_{\nc }}{||x_{\nc }||}-\dfrac{x_{\nc }}{||x_{\nc }||
}\Bigr|\Bigr|&<\dfrac{2^{-n}}{||x_{\nc }||} 
\intertext{for every $n$ such that $x_{\nc }$ is non-zero, and thus that }
\sup_{g\,\in\,Q_{n}}\Bigl|\Bigl|\rho 
_{\nc }(g)\dfrac{x_{\nc }}{||x_{\nc }||}-\dfrac{x_{\nc }}{||x_{\nc }||
}\Bigr|\Bigr|&<\dfrac{2^{-(n-1)}}{\gamma } 
\end{align*}
for infinitely many integers $n$. For these integers, $\rho _{\nc }$ is a 
compact representation for  which $\yn=x_{\nc }/||x_{\nc }||$ is a $(\qn,2^{-(n-1)}/\gamma 
)$-invariant vector of norm $1$. It has no non-zero 
$G$-invariant vector. 
Decomposing  $\rho _{\nc }$ as a direct sum of finite dimensional representations, straightforward computations show that there exists for each such integer $n$ a finite dimensional representation
$\sigma _{n}$ of $G$ with a 
$(\qq_{n},2^{-(n-2)}/\gamma )$-invariant vector but no non-zero $G$-invariant 
vector.
Lemma \ref{Lemma 6.1} follows immediately by taking a suitable subsequence of $(\sigma_{n})_{n\ge 1}$.
\end{proof}
\subsection{Construction of weakly mixing representations of $G$ with $(\qq,\varepsilon)$-invariant vectors}\label{Section 5.3}
Let $\varepsilon >0$ be an arbitrary positive number.
Our aim is to show that there exists a weakly mixing representation of $G$ with a $(\qq,\varepsilon )$-invariant vector.
We fix a 
sequence $(\en)_{\gn}$ of positive numbers decreasing to zero so fast that
the following properties hold:
\begin{enumerate}
 \item [(i)] $0<\en<\varepsilon _{0}$ for every $\gn$, and 
$\sum_{\gn}\en<\varepsilon ^{2}/2$;
\item[(ii)] the sequence $(\frac{1}{(n+1) \en^2}\sum_{j=n}^{2n}\varepsilon_j^2)_{n\ge 1}$ tends to $0$ as $n$ tends to infinity.
\end{enumerate}
\par\smallskip
We consider the representation
$\pmb{\pi} =\oti_{\gn}\,\pi _{n}$ of $G$ on the infinite tensor product  space 
$\pmb{H}=\inc{\aaaa}$, 
where the spaces $H_{n}$, the 
representations $\pi _{n}$ and the vectors $\an$ are associated to 
$\en$ for each $\gn$ by Lemma \ref{Lemma 6.1}. We refer to the appendix for undefined notation concerning infinite tensor products. We first prove the 
following fact:
\begin{fact}\label{Proposition 6.2}
 Under the assumptions above, $\pmb{\pi} $ is a strongly continuous representation 
of 
$G$ 
on $\pmb{H}$ which  has a $(\qq,\varepsilon )$-invariant 
vector.
\end{fact}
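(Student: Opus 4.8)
The plan is to build the vector $\pmb{a}=\oti_{\gn}\an$ in $\pmb{H}=\inc{\aaaa}$ (this is the reference vector defining the infinite tensor product, so $\pmb{a}$ is a well-defined unit vector) and to check two things: first that $\pmb{\pi}=\oti_{\gn}\pi_n$ is a strongly continuous unitary representation of $G$ on $\pmb{H}$, and second that $\pmb{a}$ is $(\qq,\varepsilon)$-invariant for $\pmb{\pi}$.

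For strong continuity, I would invoke the appendix (referenced as Proposition \ref{Proposition 3.2}): an infinite tensor product $\oti_{\gn}\pi_n$ built along the reference vectors $\an$ is strongly continuous provided each $\pi_n$ is strongly continuous and one has a suitable local equicontinuity condition near the identity $e$. Here is where the hypothesis that $W_1$ is a neighborhood of $e$ enters: since $W_1\subseteq Q_n$ for every $n$, the bound $\sup_{g\in Q_n}\|\pi_n(g)\an-\an\|<\en$ forces $\sup_{g\in W_1}\|\pi_n(g)\an-\an\|<\en$, and since $\sum_n\en<\infty$ the partial tensor products converge uniformly in $g\in W_1$; combined with strong continuity of each $\pi_n$ this yields strong continuity of $\pmb{\pi}$ at $e$, hence everywhere (translate by group elements). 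In the locally compact case one uses Proposition \ref{Proposition 3.2.0} instead, where this neighborhood assumption is unnecessary.

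For the $(\qq,\varepsilon)$-invariance of $\pmb{a}$, fix $g\in\qq$. Since $\qq\subseteq\qq_n$ for every $\gn$, we have $\|\pi_n(g)\an-\an\|<\en$ for all $n$. I would then use the standard infinite-tensor-product estimate: writing $\pmb{\pi}(g)\pmb{a}-\pmb{a}$ as a telescoping sum $\sum_{n\ge1}\bigl(\bigl(\oti_{k<n}\pi_k(g)a_k\bigr)\oti\bigl(\pi_n(g)\an-\an\bigr)\oti\bigl(\oti_{k>n}a_k\bigr)\bigr)$, whose terms are pairwise orthogonal in $\pmb{H}$ (because consecutive factors differ in the $n$-th slot, and $\pi_k(g)a_k$ has norm $1$), one gets $\|\pmb{\pi}(g)\pmb{a}-\pmb{a}\|^2=\sum_{n\ge1}\|\pi_n(g)\an-\an\|^2<\sum_{n\ge1}\en^2\le\sum_{n\ge1}\en<\varepsilon^2/2<\varepsilon^2$ by property (i). Hence $\sup_{g\in\qq}\|\pmb{\pi}(g)\pmb{a}-\pmb{a}\|\le\varepsilon/\sqrt2<\varepsilon=\varepsilon\|\pmb{a}\|$, which is exactly the required $(\qq,\varepsilon)$-invariance.

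I expect the main obstacle to be the strong continuity claim rather than the invariance estimate, since invariance is a clean orthogonality-plus-summability argument with no subtlety. The continuity of an infinite tensor product representation is genuinely delicate — pointwise continuity of each $\pi_n$ does not in general survive the infinite tensor construction without a uniform control near $e$ — and this is precisely why the statement of Theorem \ref{Theorem 0} carries the hypothesis that $W_1$ be a neighborhood of $e$ (in the general topological-group case). So the crux is to verify carefully that the estimate $\sup_{g\in W_1}\|\pi_n(g)\an-\an\|<\en$ with $\sum_n\en<\infty$ supplies exactly the hypothesis needed to apply the appendix's continuity criterion; once that is granted, the rest is the orthogonal telescoping computation above. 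Note that property (ii) of the chosen sequence $(\en)_{\gn}$ plays no role here — it will be needed later, in establishing that $\pmb{\pi}$ is weakly mixing — so for Fact \ref{Proposition 6.2} only property (i) is used.
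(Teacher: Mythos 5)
Your overall architecture matches the paper's: well-definedness and strong continuity of $\pmb{\pi}=\oti_{\gn}\pi_n$ come from the appendix (Proposition \ref{Proposition 3.2}, or Proposition \ref{Proposition 3.2.0} in the locally compact case), fed by the estimate $\sup_{g\in Q_n}|1-\pss{\pi_n(g)\an}{\an}|\le\sup_{g\in Q_n}\|\pi_n(g)\an-\an\|<\en$ together with $W_1\subseteq Q_n$ and $\sum_{\gn}\en<\infty$; and the $(\qq,\varepsilon)$-invariant vector is $\aaaa=\oti_{\gn}\an$. One small omission on the continuity side: Proposition \ref{Proposition 3.2} first requires the series $\sum_{\gn}|1-\pss{\pi_n(g)\an}{\an}|$ to converge for \emph{every} $g\in G$ (otherwise $\pmb{\pi}(g)$ is not even defined on $\inc{\aaaa}$); this holds because each $g$ belongs to all but finitely many of the sets $Q_n$, and you should say so.

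The genuine problem is in your invariance estimate: the terms of your telescoping sum are \emph{not} pairwise orthogonal, so the identity $\|\pmb{\pi}(g)\aaaa-\aaaa\|^2=\sum_{\gn}\|\pi_n(g)\an-\an\|^2$ is false. For $n<m$ the inner product of the $n$-th and $m$-th terms equals
\[
\bigl(1-\pss{\an}{\pi_n(g)\an}\bigr)\,\Bigl(\prod_{n<k<m}\pss{a_k}{\pi_k(g)a_k}\Bigr)\,\bigl(\pss{a_m}{\pi_m(g)a_m}-1\bigr),
\]
which does not vanish in general. A one-dimensional example already refutes the formula: with $H_1=H_2=\C$, $a_1=a_2=1$ and $\pi_1(g)=\pi_2(g)=e^{i\theta}$, the left-hand side is $|e^{2i\theta}-1|^2\approx 4\theta^2$ for small $\theta$, while your right-hand side is $2|e^{i\theta}-1|^2\approx 2\theta^2$; so the claimed equality even \emph{underestimates} the true norm, and the error is of the same order as the main term. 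The conclusion is nevertheless recoverable: either keep the telescoping and bound each cross term by $\en\varepsilon_m$ (each difference factor has modulus at most $\|\pi_n(g)\an-\an\|<\en$), which gives $\|\pmb{\pi}(g)\aaaa-\aaaa\|^2\le\sum_{\gn}\en^2+\bigl(\sum_{\gn}\en\bigr)^2<\varepsilon^2$ for $\varepsilon<\sqrt{2}$; or, as the paper does, avoid the telescoping altogether and write $\|\pmb{\pi}(g)\aaaa-\aaaa\|^2=2\,(1-\textrm{Re}\prod_{\gn}\pss{\pi_n(g)\an}{\an})\le 2\sum_{\gn}|1-\pss{\pi_n(g)\an}{\an}|<2\sum_{\gn}\en<\varepsilon^2$, using only property (i). So the statement survives, but the step you singled out as having ``no subtlety'' is precisely where your argument breaks.
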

\begin{proof}[Proof of Fact \ref{Proposition 6.2}] 
 In order to prove that $\pmb{\pi} $ is well-defined and strongly continuous, it suffices 
to 
check that the assumptions of Proposition \ref{Proposition 3.2} in the appendix hold true. 
For every $g\in G$ and $\gn$, we have
$|1-\pss{\pi _{n}(g)\an}{\an}|\le||\pi _{n}(g)\an-\an||$ so that 
$$\sup_{g\,\in\,Q_{n}}|1-\pss{\pi _{n}(g)\an}{\an}|<\varepsilon _{n}.$$
By assumption (i), the series $\sum_{\gn}\en$ is convergent. Since every 
element $g\in G$ belongs to all the sets $\qn$ except finitely many, the 
series $\sum_{\gn}|1-\pss{\pi _{n}(g)\an}{\an}|$ is convergent for every 
$g\in G$. Moreover, it is uniformly convergent on $Q_{1}$, and hence on 
$W_{1}$. The function 
\[
\smash{\xymatrix{g\ar@{|->}[r]&\ds\sum_{\gn}|1-\pss{\pi _{n}(g)\an}{\an}|}}
\]
\par\bigskip
\noindent
is thus continuous on $W_{1}$, which is a neighborhood of $e$. It follows 
then from Proposition \ref{Proposition 3.2} that $\pmb{\pi } $ is strongly 
continuous on $\pmb{H}$. If $G$ is locally compact,  
Proposition \ref{Proposition 3.2.0} and the first part of the argument 
above suffice to show that $\pmb{\pi }$ is strongly continuous, even when 
 $W_{1}$ is not a neighborhood of $e$.
\par\smallskip  
Next, it is easy to check that the 
elementary vector $\aaaa=\oti_{\gn}\an$ of $\inc{\aaaa}$ satisfies
$||\aaaa||=1$ and $\sup_{g\in\qq}||\pmb{\pi } (g)\aaaa-\aaaa||<\varepsilon 
$.
 Indeed $||\aaaa||=\prod_{\gn}||\an||=1$, and for every $g\in \qq$ 
we have (using the fact that $\qq\subseteq \qn$ for every $\gn$)
\begin{align*}
 ||\pmb{\pi } (g)\aaaa-\aaaa||^{2}&=2\,(1-\textrm{Re}\pss{\pmb{\pi} 
(g)\aaaa}{\aaaa})\le 
2\,
 \Bigl|1-\prod_{\gn}\pss{\pi _{n}(g)\an}{\an}\Bigr|\\
 &\le 2\sum_{\gn}|1-\pss{\pi _{n}(g)\an}{\an}|<2\sum_{\gn}\en .
\end{align*}
Assumption (i) on the sequence $(\en)_{\gn}$ implies that 
$\sup_{g\,\in\,\qq} ||\pmb{\pi } (g)\aaaa-\aaaa||^{2}<\varepsilon ^{2}$,
and $\aaaa$ is thus a $(\qq,\varepsilon )$-invariant vector for $\pmb{\pi }$.
\end{proof}

Using the notation of Section \ref{Section 4.1}, we now decompose
$\pi _{n}$ and $H_{n}$ as
\[
\pi _{n}=\opl_{j\,\in\,J_{n}}\Bigl(\ \opl_{i\,\in\, I_{j,\,n}}\pi 
_{\jn } \Bigr)\quad\textrm{and}\quad 
H _{n}=\opl_{j\,\in\,J_{n}}\Bigl(\ \opl_{i\,\in\, I_{j,\,n}}H
_{i,\,\jn } \Bigr)
\]
respectively. Since $H_{n}$ is finite dimensional, all the sets $J_{n}$ 
and $I_{\jn }$, 
$j\in J_{n}$, are finite, and we assume that they are subsets of $\N$. For every $j\in J_{n}$, $H_{\ijn}=K_{\jn}$. 
We  also decompose $\an\in 
H_{n}$ as $\an=\opl\limits_{j\,\in J_{n}}\,\bigl(\,\,\opl\limits_{i\,\in 
I_{\jn}}a_{\ijn}\, \bigr)$, and write
$\ti{a}_{\jn}=\oplus_{i\in I_{\jn}}a_{\ijn}$ for every $j\in J_{n}$.
We have 
\begin{equation}\label{4.0}
||\ti{a}_{\jn}||=\left(\sum_{i\,\in 
I_{\jn}}\!\!||\,a_{\ijn}||^{2}\right)^{\frac{1}{2}}
\quad\textrm{ and }\quad
||a_{n}||= \left(\sum_{j\,\in J_{n}}\,\,||\ti{a}_{\jn}||^{2}\right)^{\frac{1}{2}}=1,
\end{equation}
so that $||\ti{a}_{\jn}||\le 1$ for every $j\in J_{n}$.
Also,
\smallskip
\begin{align}\label{Eq7}
 ||\pi _{n}(g)\an-\an||^{2}&=\sum_{j\,\in J_{n}}\,\,\sum_{i\,\in I_{\jn}}||
 \pi _{\jn}(g)a_{\ijn}-a_{\ijn}||^{2} \quad \textrm{ for every } g\in G,
 \end{align}
so that
\begin{equation}\label{Eq8}
 \sup_{g\,\in\,\qn}\Bigl(\sum_{i\,\in I_{\jn}}||\pi 
_{\jn}(g)\,a_{\ijn}-a_{\ijn}||^{2}\Bigr)^{1/2}<\en
\quad \textrm{ for every } j\in J_n.
\end{equation}
\par\medskip
There are now two cases to consider.
\par\medskip
\noindent $\bullet$ \textbf{Case 1.} We have $\underline{\textrm{lim}}_{n\to+\infty}\max_{j\in J_n}||\ti{a}_{\jn}||=0$.
\par\medskip
Using Corollary \ref{Corollary 5.7} and the fact that $\sum_{j\,\in 
J_{n}}||\,\ti{a}_{\jn}||^{2}=||\an||^{2}=1$, 
 we obtain that
\[
 m(\,|\pss{\pi _{n}(\,\centerdot\,)\an}{\an}|^{2})\le\sum_{j\,\in J_{n}}
 ||\,\ti{a}_{\jn}||^{4}\le \max_{j\,\in 
J_{n}}||\,\ti{a}_{\jn}||^{2}\,.\,\sum
_{j\,\in J_{n}}||\,\ti{a}_{\jn}||^{2}\le \max_{j\,\in 
J_{n}}||\,\ti{a}_{\jn}||^{2}.
\]
 It follows from our assumption that 
$\underline{\textrm{lim}}_{n\to +\infty}m(\,|\pss{\pi _{n}
(\,\centerdot\,)\an}{\an}|^{2})=0$.
So $\pmb{\pi} $ is weakly mixing by 
Proposition \ref{Proposition 4.3}. We have thus proved in this case the existence of a weakly mixing representation of $G$ with a $(\qq,\varepsilon)$-invariant vector.
\par\medskip
\noindent $\bullet$ \textbf{Case 2.} There exists $\delta>0$ such that $\max_{j\in J_n}||\ti{a}_{\jn}||>\delta$ for every $n\ge 1$.
\par\medskip
Let, for every $n\ge 1$, $j_n\in J_n$ be such that $||\ti{a}_{j_n,\,n}||>\delta$. Set $I_n=I_{j_n,\,n}\subseteq\N$, $\sigma_n=\pi_{j_n,\,n}$, $K_n=K_{j_n,\,n}$ and $b_{i,\,n}=a_{i,\,j_{n},\,n}$ for every $i\in I_n$.
Then $\sigma_n$ is a non-trivial irreducible representation of $G$ on  
the finite dimensional  space $K_{n}$, and by (\ref{4.0}) and (\ref{Eq8}) the finite family $(b_{i,\,n})_{i\,\in I_{n}}$ of vectors 
of 
$K_{n}$ satisfies
\begin{equation}\label{eq++}
1\ge\Bigl(\sum_{i\,\in I_{n}}||\,b_{i,\,n}||^{2} 
\Bigr)^{1/2}>\delta\quad\textrm{and}\quad\sup_{g\,\in\,\qn}
\Bigl(\sum_{i\,\in I_{n}}||\sigma _{n}(g)\,b_{i,\,n}-b_{i,\,n} 
||^{2}\Bigr)^{1/2}\!\!\!<\en.
\end{equation}
 If we write 
\[
\ti{K}_{n}=\opl_{i\,\in I_{n}}K_{n},\quad \ti{b}_{n}=\opl_{i\,\in 
I_{n}}b_{i,\,n},\quad \textrm{and}\quad \ti{\sigma }_{n}=\opl_{i\,\in 
I_{n}}\sigma 
_{n},
\]
this means that 
\begin{equation}\label{4.4}
 1\ge ||\ti{b}_{n}||>\delta \quad \textrm{ and } \quad \sup_{g\,\in\,\qn}||\ti{\sigma 
}_{n}(g)\ti{b}_{n}-
\ti{b}_{n}||<\en.
\end{equation}
\par\medskip 
 Now we again have to consider separately two cases.
 \par\medskip
\noindent \emph{-- Case 2.a.}
There exists an infinite 
subset $D$ of $\N$ such that 
whenever $k$ and $l$ are two distinct elements of $D$, $\sigma _{k}$ and $\sigma
 _{l}$ are not equivalent. Replacing the sequence $(\sigma
_{n})_{\gn}$ by $(\sigma 
_{n})_{n\in D}$, we can 
 suppose without loss of generality 
that for every distinct integers $m$ and $n$, with $m,n\ge 1$, $\sigma _{m}$ and $\sigma _{n}$ 
are not equivalent.
\par\smallskip 
Consider for every $\gn$ the representation
\[
\rho _{n}=\ti{\sigma }_{n}\opl\cdots\opl\ti{\sigma} _{2n}\quad 
\textrm{of}\ G\ \textrm{on}\quad
\hh _{n}=\ti{K}_{n}\opl\cdots\opl\ti{K}_{2n},
\]
and the vector $b_{n}={\bigl(\sum_{k=n}^{2n}||\ti{b}_{k}||^{2}\bigr)^{-\frac{1}{2}}}\bigl(\,\ti{b}_{n}\opl\cdots\opl 
\ti{b}_{2n} \bigr)$
of $\hh _{n}$, which satisfies $||b_{n}||=1$. For every $g\in \qn$ we 
have, since $\qn$ is contained in $Q_{j}$ for every $j\ge n$,
\[
||\rho_{n}(g)b_{n}-b_{n}||^{2}={\bigl(\sum_{k=n}^{2n}||\ti{b}_{k}||^{2}\bigr)^{-{1}}}\sum_{j=n}^{2n}\ 
\bigl|\bigl|\ti{\sigma 
}_{j}(g)\ti{b}_{j}-\ti{b}_{j}\bigr|\bigr|^{2}<\dfrac{1}{\delta^{2}(n+1)}\sum_{j=n}^{2n}
\varepsilon _{j}^{2}
\]
by (\ref{4.4}).
By assumption
(ii) on the sequence $(\en)_{\gn}$, we obtain that there exists an integer $n_{0}\ge 1$ such that
$\sup_{g\,\in\,\qn}||\rho _{n}(g)b_{n}-b_{n}||<\en$ for every $n\ge n_{0}$. Let 
now $\pmb{\rho}
=\oti_{n\ge n_{0}}\rho _{n}$ be the infinite tensor product of the representations $\rho 
_{n}$ on the space 
$\pmb{\hh} =\bigotimes_{n\ge n_{0}}^{\aaaa}\hh _{n}$. An argument similar  
to the one given in Fact \ref{Proposition 6.2} shows that
$\pmb{\rho}  $ is a strongly continuous 
representation of $G$ 
on 
 $\pmb{\hh}$ which has a $(\qq,\varepsilon )$-invariant 
vector.
It remains to 
prove that $\pmb{\rho} $ is weakly mixing, and for this we will show  that
$m(|\pss{\rho
_{n}(\,\centerdot\,)b_{n}}{b_{n}}|^{2})$ tends to zero as $n$ tends to 
infinity.
Recall that for every $\gn$, the representations $\sigma _{n},\dots,\sigma 
_{2n}$ are mutually non-equivalent, so that, by Corollary \ref{Corollary 5.7}, we 
have for every $\gn$
\begin{eqnarray*}
m(|\pss{\rho_{n}(\,\centerdot\,)b_{n}}{b_{n}}|^{2})\le
 \sum_{j=n}^{2n}\Bigl|\Bigl|{\bigl(\sum_{k=n}^{2n}||\ti{b}_{k}||^{2}\bigr)^{-\frac{1}{2}}}\ti{b}_{j} \Bigr| 
\Bigr|^{4}\le \dfrac{1}{\delta^{4}(n+1)^{2}}\sum_{j=n}^{2n} ||\ti{b}_{j}||^{4}
\le \dfrac{1}{\delta^{4}(n+1)}
\end{eqnarray*}
by (\ref{4.4}).
So $m(|\pss{\rho_{n}(\,\centerdot\,)b_{n}}{b_{n}}|^{2})$ tends to zero as $n$ tends 
to infinity. By Proposition \ref{Proposition 4.3},
$\pmb{\rho }$ is weakly mixing. We have proved again in this case the existence of a weakly mixing representation of $G$ with a $(\qq,\varepsilon)$-invariant vector.
\par\medskip

The other case we have to consider is when there exists an integer $n_{1}\ge 1$ such that for every $n\ge n_{1}$, $\sigma_{n}$ is equivalent to one of the representations $\sigma _{1},\dots,\sigma _{n_{1}}$. Indeed, if there is no such integer, we can construct a strictly increasing sequence $(n_{k})_{k\ge 1}$ of integers such that, for every $k\ge 1$, $\sigma_{n_{k}}$ is not equivalent to one of the representations $\sigma _{1},\dots,\sigma _{n_{k-1}}$. The set $D=\{n_{k}\, ;\, k\ge 1\}$ then has the property that whenever $m$ and $n$ are two distinct elements of $D$, $\sigma_{m}$ and $\sigma_{n}$ are not equivalent, and we are back to the setting of Case $2.a$. Without loss of generality, we can suppose that $\sigma_n$ is equal to $\sigma_{1}$ for every $n\ge 1$.
\par\medskip
\noindent \emph{-- Case 2.b.} For every $n\ge 1$, $\sigma_n$ is equal to $\sigma_{1}$.
By (\ref{eq++}), 
we have
\[
1\ge\Bigl(\sum_{i\,\in I_{n}}||\,b_{i,\,n}||^{2} 
\Bigr)^{1/2}>\delta\quad\textrm{and}\quad\sup_{g\,\in\,\qn}
\Bigl(\sum_{i\,\in I_{n}}||\sigma _{1}(g)b_{i,\,n}-b_{i,\,n} 
||^{2}\Bigr)^{1/2}\!\!<\en,
\]
where all the vectors $b_{i,n}$, $i\in I_{n}$, belong to $H_{1}$.
For each $\gn$, set $c_{n}=\opl\limits
_{i\,\in I_{n}} b_{i,\,n}$, seen as a vector of the infinite direct 
sum
$H=\opl\limits_{j\ge 1}H_{1}$ by defining its $j^{th}$ coordinate to be zero when $j$ does not belong to $I_{n}$. Let also $\sigma $ be the infinite direct sum
$\sigma =\opl\limits_{j\ge 1}\sigma _{1}$ of $\sigma _{1}$ on $H$. Then we have, 
for every $\gn$, 
\[
1\ge||c_{n}||>\delta\quad\textrm{and}\quad\sup_{g\,\in\,\qn} ||\sigma
(g)c_{n}-c_{n}||<\en.
\]
\par\smallskip 
Let now $S$ be a finite subset of $G$. There exists an integer $n_{S}\ge 
1$ such that $S\subseteq \qn$ for every $n\ge n_{S}$, and hence
\[
\sup_{g\,\in S}\,||\sigma (g)c_{n}-c_{n}||<\en\quad\textrm{for every $n\ge 
n_{S}$.}
\]
It follows that $\sigma $ has almost-invariant vectors for finite sets: for every $\delta >0$ and every finite 
subset $S$ of $G$, $\sigma $ has an $(S,\delta )$-invariant vector. This 
implies that $\sigma_{1}$ itself has almost-invariant vectors for finite 
sets (see \cite[Lem.~1.5.4]{Pet} or \cite{Ke}). 
Since $\sigma
_{1}$ is a finite dimensional representation, it follows that $\sigma_{1}$ 
has almost-invariant vectors.
If $(v_n)_{n\ge 1}$ is a sequence of unit vectors of $H_1$ such that 
\[
\sup_{g\,\in G}\,||\sigma (g)v_{n}-v_{n}||<2^{-n}\quad\textrm{for every $n\ge 1$},
\]
then any accumulation point of  $(v_n)_{n\ge 1}$ is a
non-zero $G$-invariant vector for $\sigma_{1}$. This contradicts our initial assumption on 
$\sigma_{1}$, and shows that the hypothesis of Case 2.b cannot be fulfilled.
\par\medskip
Summing up our different cases, we have thus proved that there exists for every $\varepsilon 
>0$ a representation of $G$ with a $(Q,\varepsilon )$-invariant vector but 
no finite dimensional subrepresentation. This contradicts  assumption (\ref{Pro1}) of Theorem 
\ref{Theorem 0}, and concludes the proof.

 \section{Some consequences of Theorem \ref{Theorem 0}}\label{Section 6}
 We begin this section by proving the two characterizations of \ka\ sets obtained as consequences of Theorem \ref{Theorem 0}.
 
 \subsection{Proofs 
of Corollary \ref{Corollary 1} and Theorem \ref{Theorem 2}} \label{Section 
7.2}
Let us first prove Corollary 
\ref{Corollary 1}.
\begin{proof}[Proof of Corollary \ref{Corollary 1}]
Let $\qq_{0}$ 
be a subset of $G$ which has non-empty interior and which generates $G$.
Denote for each $\gn$ by $\qq_{0}^{\,\pm n}$ the set $\{g_{1}^{\,\pm 1}
\dots g_{n}^{\,\pm 1}\,;\, g_{1},\dots,g_{n}\in\qq_{0}\}$. Then 
$G=\bigcup_{\gn}\qq_{0}^{\,\pm n}$. Let $g_{0}$ be an element of the 
interior of 
 $\qq_{0}$. Then $g_{0}^{-1}\qq_{0}$ is a neighborhood of $e$. There 
exists $n_{0}\ge 1$ such that $g_{0}^{-1}$ belongs to 
$\qq_{0}^{\,\pm n_{0}}$, 
and thus $\qq_{0}^{\,\pm(n_{0}+1)}$ is a neighborhood of $e$. If we set
$W_{n}=\qq_{0}^{\,\pm(n_{0}+n)}$ for $\gn$, the sequence of sets 
$(W_{n})_{\gn}$ is increasing, $W_{1}$ is a neighborhood of $e$, and $(W_{n})_{n\ge 1}$
satisfies the assumptions of Theorem \ref{Theorem 0}. So if $\qq$ is a 
subset of $G$ for which assumption (\ref{Pro1}) of Theorem \ref{Theorem 0} 
holds true, there exists 
$n\ge 1$ such that $\qq_{0}^{\,\pm(n+n_{0})}\cup Q$ is a \ka\ set in $G$. 
Let $\varepsilon >0$ be a \ka\ constant for this set. Then $\varepsilon 
/(n+n_{0})$ is a \ka\ constant for $\qq_{0}\cup\qq$,
and $\qq_{0}\cup \qq$ is a \ka\ set in 
$G$. If $G$ is locally compact, the same proof 
holds true without the assumption that $\qq_{0}$ has non-empty interior.
\end{proof}

\begin{proof}[Proof of Theorem \ref{Theorem 2}]
Let us first show that (a) implies (b). Suppose that $Q$ is a \ka\ set, and let $0<\varepsilon <\sqrt{2}$ be a \ka\ constant for $Q$. Let 
$\delta = \sqrt{1-{\varepsilon^2}/{2}}$ 
and consider a  representation $\pi$ of $G$ on a Hilbert space $H$ for 
which there is a vector $x\in H$ with $||x||=1$ such that 
$\inf_{g\in\qq}\left|\pss{\pi(g)x}{x}\right|  > \delta.$
Then the representation $\pi \oti \ba{\pi }$ of 
$G$ on $H\oti \ba{H}$ verifies
\[
2\textrm{Re}\pss{\pi \oti\ba{\pi }(g)x \oti\ba{x }}{x 
\oti\ba{x }}=2|\pss{\pi (g)x }{x }|^{2} > 2 - \varepsilon^2
\]
for every $g\in\qq$. Hence
$
\left\|\pi \oti\ba{\pi }(g)x \oti\ba{x } - x \oti\ba{x }\right\| < \varepsilon
$
for every $g\in\qq$ and $\pi \oti \ba{\pi }$ has a non-zero \mbox{$G$-invariant} vector. It follows from Proposition \ref{Proposition 4.1}  that $\pi$ has a finite dimensional subrepresentation. Thus $(b)$ is true.
That (b) implies (c) is straightforward, and
that (c) implies (a) is a consequence of Corollary \ref{Corollary 1}.
\end{proof}

\subsection{Property (T) in $\sigma $-compact 
locally compact groups}\label{Section7.1}
As a  consequence of Theorem \ref{Theorem 0}, we retrieve a 
characterization of Property (T) due to Bekka and Valette \cite{BV}, 
\cite[Th.~2.12.9]{BdHV}, valid for $\sigma $-compact locally compact  
groups, which
states the following:
\begin{theorem}[\cite{BV}]\label{Theorem 7.1}
Let $G$ be a $\sigma $-compact locally compact group. 
Then $G$ has 
Property (T) if and only if every unitary representation of $G$ with 
almost-invariant vectors has a non-trivial finite dimensional 
subrepresentation.
\end{theorem}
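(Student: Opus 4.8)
The plan is to prove the two implications separately, the forward one being essentially a restatement of the definitions and the converse one an application of Theorem \ref{Theorem 0}. For the forward implication, suppose $G$ has Property (T), so that it admits a compact \ka\ set $\qq_{0}$, and fix a \ka\ constant $\varepsilon_{0}>0$ for $\qq_{0}$. If $\pi$ is a unitary representation of $G$ with almost-invariant vectors, then, taking the compact set $\qq_{0}$ and the number $\varepsilon_{0}$ in the definition of almost-invariant vectors, $\pi$ has a $(\qq_{0},\varepsilon_{0})$-invariant vector; by definition of a \ka\ pair, $\pi$ then has a non-zero $G$-invariant vector $v$, and $\C v$ is a one-dimensional $G$-invariant subspace, hence a non-zero finite dimensional subrepresentation of $\pi$.

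For the converse, assume that every unitary representation of $G$ with almost-invariant vectors has a non-zero finite dimensional subrepresentation. Using $\sigma$-compactness, write $G=\bigcup_{\gn}W_{n}$ with $(W_{n})_{\gn}$ an increasing sequence of compact subsets of $G$. The plan is to exhibit a \emph{compact} subset $\qq$ of $G$ and a constant $\varepsilon>0$ for which assumption (\ref{Pro1}) of Theorem \ref{Theorem 0} holds; granting this, the locally compact version of Theorem \ref{Theorem 0} provides an integer $\gn$ such that $W_{n}\cup\qq$ is a \ka\ set in $G$, and since $W_{n}\cup\qq$ is compact this shows that $G$ has Property (T). To produce such a $\qq$, I would argue by contradiction: if no compact subset of $G$ satisfies (\ref{Pro1}) for any $\varepsilon>0$, then for each $\gn$, applying this to the compact set $W_{n}$ and to $\varepsilon=1/n$, there is a unitary representation $\pi_{n}$ of $G$ with a $(W_{n},1/n)$-invariant vector but no finite dimensional subrepresentation, i.e.\ (Proposition \ref{Proposition 4.1}) a weakly mixing representation. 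Set $\pi=\bigoplus_{\gn}\pi_{n}$, which is again a strongly continuous unitary representation. On the one hand $\pi$ has almost-invariant vectors: given a compact set $S\subseteq G$ and $\delta>0$, pick $\gn$ with $S\subseteq W_{n}$ and $1/n<\delta$; then the subrepresentation $\pi_{n}$ of $\pi$, and hence $\pi$ itself, has an $(S,\delta)$-invariant vector. On the other hand $\pi$ is weakly mixing, because a direct sum of weakly mixing representations is weakly mixing: by Proposition \ref{Proposition 4.2}, a non-zero finite dimensional subrepresentation of $\bigoplus_{\gn}\pi_{n}$ would, after composing its inclusion with a suitable coordinate projection, yield a non-zero intertwiner from a finite dimensional irreducible representation into some $\pi_{n}$, and the image of such an intertwiner would be a non-zero finite dimensional subrepresentation of $\pi_{n}$, which is impossible. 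By Proposition \ref{Proposition 4.1} again, $\pi$ has no finite dimensional subrepresentation, contradicting the hypothesis on $G$. Hence the desired compact $\qq$ exists, and the proof is complete.

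The forward implication merely unwinds the definitions, so the substance of the statement is the converse, and the step I expect to require the most care is the reduction of the global hypothesis — a statement about \emph{all} representations with almost-invariant vectors — to assumption (\ref{Pro1}) of Theorem \ref{Theorem 0}, which concerns a single compact set $\qq$ and a single $\varepsilon$. This is precisely where $\sigma$-compactness is used, through the countable direct sum $\bigoplus_{\gn}\pi_{n}$; the auxiliary fact that a direct sum of weakly mixing representations stays weakly mixing, though standard, should be written out, as should the simple but essential observation that $W_{n}\cup\qq$ is compact, which is what upgrades ``$W_{n}\cup\qq$ is a \ka\ set'' to ``$G$ has Property (T)''.
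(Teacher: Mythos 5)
Your proof is correct and follows essentially the same route as the paper: the forward direction is immediate from the definitions, and the converse reduces the hypothesis to assumption (\ref{Pro1}) for a compact set via a countable direct sum $\bigoplus_{\gn}\pi_{n}$ built from a compact exhaustion $(W_{n})_{\gn}$, then applies the locally compact version of Theorem \ref{Theorem 0}. The only difference is presentational: where the paper invokes ``the same argument as in the proof of Corollary \ref{CorollaryA}'' and cites \cite[Prop.~A.1.8]{BdHV} for the fact that the direct sum has no finite dimensional subrepresentation, you write out these steps explicitly, which is fine.
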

The proof of \cite{BV} relies on the equivalence between Property (T) and 
Property (FH) for such groups \cite[Th.~2.12.4]{BdHV}. As a 
direct consequence of Theorem \ref{Theorem 0}, we will derive a new proof of 
Theorem \ref{Theorem 7.1} which does not involve property (FH).
\par\smallskip
If $\qq$ is a subset of a topological group $G$, and if $\pi $ is a unitary representation of $G$ on a Hilbert space $H$, we say that $\pi $ has \emph{$\qq$-almost-invariant vectors} if it has $(\qq,\varepsilon )$-invariant vectors for every $\varepsilon >0$. The same argument as in  \cite[Prop.~1.2.1]{BdHV} shows that $\qq$ is a \ka\ set in $G$ if and only if every representation of $G$ with $\qq$-almost-invariant vectors has a non-zero $G$-invariant vector. As a direct corollary of Theorem \ref{Theorem 2}, we obtain the following characterization of \ka\ sets which generate the group:
\begin{corollary}\label{CorollaryA}
Let $\qq$ be a subset of a locally compact group $G$ which generates $G$. Then $\qq$ is a \ka\ set in $G$ if and only if every representation $\pi $ of $G$ with $\qq$-almost-invariant vectors has a non-trivial finite dimensional subrepresentation.
\end{corollary}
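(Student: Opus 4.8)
The plan is to read the statement off the equivalence (a)~$\Leftrightarrow$~(c) of Theorem~\ref{Theorem 2}, which is applicable here since $\qq$ generates the locally compact group $G$. Recall that "$\pi $ has $\qq$-almost-invariant vectors" means that $\pi $ has a $(\qq,\varepsilon )$-invariant vector for every $\varepsilon >0$.

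Suppose first that $\qq$ is a \ka\ set in $G$, and let $\varepsilon _{0}$ be a \ka\ constant for $\qq$. If $\pi $ is a unitary representation of $G$ with $\qq$-almost-invariant vectors, then in particular it has a $(\qq,\varepsilon _{0})$-invariant vector, hence a non-zero $G$-invariant vector $v$; the one-dimensional subspace $\C v$ is then a non-trivial finite dimensional subrepresentation of $\pi $. This proves one implication.

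For the converse, I would verify assertion (c) of Theorem~\ref{Theorem 2} and then conclude by the implication (c)~$\Rightarrow$~(a). Assume that (c) fails. Then for every $\gn$ there is a unitary representation $\pi _{n}$ of $G$ on a Hilbert space $H_{n}$ admitting a $(\qq,1/n)$-invariant vector $a_{n}$ but no finite dimensional subrepresentation, i.\,e.\ weakly mixing by Proposition~\ref{Proposition 4.1}. Form the (strongly continuous) unitary representation $\pi =\opl_{\gn}\pi _{n}$. On one hand, given $\varepsilon >0$, picking $n>1/\varepsilon $ and regarding $a_{n}$ as a vector of the $n$-th summand shows that $\pi $ has $\qq$-almost-invariant vectors. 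On the other hand $\pi $ is weakly mixing: decomposing $\pi \oti\ba{\pi }=\opl_{m,\,n}\pi _{m}\oti\ba{\pi _{n}}$, it suffices to check that no summand $\pi _{m}\oti\ba{\pi _{n}}$ has a non-zero $G$-invariant vector, and such a vector would correspond to a non-zero Hilbert-Schmidt operator $T$ intertwining $\pi _{n}$ and $\pi _{m}$, whence $T^{*}T$ would be a non-zero positive compact operator commuting with $\pi _{n}$ and one of its (necessarily finite dimensional) non-zero eigenspaces would be a finite dimensional subrepresentation of $\pi _{n}$, a contradiction. So $\pi $ has $\qq$-almost-invariant vectors but no finite dimensional subrepresentation at all, in particular no non-trivial one, contradicting the hypothesis. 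Hence (c) holds, and Theorem~\ref{Theorem 2} gives that $\qq$ is a \ka\ set in $G$.

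The only ingredient above that is not immediate from the definitions or from Theorem~\ref{Theorem 2} is the stability of the class of weakly mixing representations under countable direct sums; this is the (minor) point requiring an argument, and it is handled either by the Hilbert-Schmidt intertwiner computation sketched above, or, for the diagonal summands $\pi _{n}\oti\ba{\pi _{n}}$, directly by characterization~(3) of Proposition~\ref{Proposition 4.1}.
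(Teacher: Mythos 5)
Your proof is correct and follows essentially the same route as the paper: the paper also argues by contradiction, forms a direct sum $\bigoplus_{\varepsilon>0}\pi_{\varepsilon}$ of representations with $(\qq,\varepsilon)$-invariant vectors but no finite dimensional subrepresentation, and concludes via assumption (*) of Theorem~\ref{Theorem 0} (equivalently, (c)$\Rightarrow$(a) of Theorem~\ref{Theorem 2}). The only difference is that the paper simply cites \cite[Prop.~A.1.8]{BdHV} for the fact that such a direct sum has no finite dimensional subrepresentation, whereas you supply the (correct) Hilbert--Schmidt intertwiner argument proving it.
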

\begin{proof}[Proof of Corollary \ref{CorollaryA}]
 The only thing to prove is that if every representation $\pi $ of $G$ with $\qq$-almost-invariant vectors has a non-trivial finite dimensional representation, $\qq$ is a \ka\ set. For this it suffices to show the existence of an $\varepsilon >0$ such that assumption (\ref{Pro1}) of Theorem \ref{Theorem 0} holds true. 
 The argument is exactly the same as the one given in \cite[Prop.~1.2.1]{BdHV}: suppose that there is no such $\varepsilon $, and let, for every $\varepsilon >0$, $\pi _{\varepsilon }$ be a representation of $G$ with a $(\qq,\varepsilon )$-invariant vector but no finite dimensional subrepresentation. Then $\pi =\bigoplus_{\varepsilon >0}\pi _{\varepsilon }$ has $\qq$-almost-invariant vectors but no finite dimensional subrepresentation (this follows immediately from \cite[Prop.~A.1.8]{BdHV}), contradicting our initial assumption. 
 \end{proof}
 
\begin{proof}[Proof of Theorem \ref{Theorem 7.1}]
It is clear that Property (T) implies that every representation of $G$ with almost-invariant vectors 
has a non-trivial finite dimensional subrepresentation.
 Conversely, suppose that every representation of $G$ with almost-invariant vectors 
has a non-trivial finite dimensional subrepresentation. Using the same argument as in the proof of Corollary \ref{CorollaryA}, we see that there exists 
a compact subset $\qq$ of $G$ such that assumption (\ref{Pro1}) of 
Theorem \ref{Theorem 0} holds true. 
Choosing for $(W_{n})_{\gn}$ an increasing sequence of compact subsets of $G$ 
such that $\bigcup_{\gn}W_{n}=G$, Theorem \ref{Theorem 
0} implies that there exists an $\gn$ such that 
$W_{n}\cup\qq$ is a \ka\ set in $G$. Since $W_{n}\cup\qq$ is compact, 
$G$ has Property (T).
\end{proof}

\subsection{Equidistribution assumptions: proofs of Theorems 
\ref{Theorem 1} and \ref{Theorem 
3}} Let $G$ be a second countable locally compact 
group, and let $\pi $ be a unitary 
representation of $G$ on a separable Hilbert space $H$. Such a 
representation 
can be decomposed as a direct integral of irreducible unitary 
representations over a Borel space (see for instance 
\cite[Sec.~F.5]{BdHV} or \cite{Fo}). More precisely, there exists a finite positive 
measure $\mu $ on a standard Borel space $Z$, a measurable field
$z\mapsto H_{z}$ of Hilbert spaces over 
$Z$, and a measurable field of irreducible representations 
$z\mapsto \pi_{z}$, where each $\pi _{z}$ is a representation of $G$ on $H_{z}$,
such that $\pi $ is unitarily equivalent to the direct integral
$\pi _{\mu }=\ds\int_{Z}^{\opl}\pi_{z}\,d\mu (z)$ on $\hh=\ds\int_{Z}^{\opl}H_{z}\,d\mu 
(z)$. The Hilbert space
$\hh$ is the set of equivalence classes of square integrable vector 
fields $z\mapsto x_z$, with $x_z\in H_{z}$, 
with respect to the measure $\mu $;
$\pi _{\mu }$ is the 
representation of $G$ on $\hh$ defined by 
$
\pi _{\mu }(g)x=[z\mapsto\pi_{z}(g)x_z]
$
for every $g\in G$ and $x\in\hh$. 
\begin{proof}[Proof of Theorem \ref{Theorem 3}] Our aim is to show that, 
under the hypothesis of Theorem \ref{Theorem 3}, assumption (\ref{Pro1}) 
of Theorem \ref{Theorem 0} is satisfied. Let $\pi $ be a representation of $G$ on 
a Hilbert space $H$. Since $G$ is second countable, we can suppose that 
$H$ is separable. Suppose that $\pi $ admits a $(\qq,1/2
)$-invariant vector $x \in H$ and, using the notation and the result recalled above, write 
\[
\pi =\int_{Z}^{\opl}\pi_{z}\,d\mu (z),\quad x 
=[\xymatrix{\!z\ar@{|->}[r]&x_{z}\!}],\quad \textrm{and}\quad
H=\int_{Z}^{\opl}H_{z}\,d\mu (z).
\]
We have for 
every $k\ge 1$
\begin{align}
\textrm{Re}\,\pss{\pi (g_{k})
 x }{x }&=\textrm{Re}\int_{Z}\pss{\pi_{z}(g _{k})
 x_{z} }{x_{z}}\,d\mu (z)=1-\frac{1}{2}\,||\pi (g_{k})x -x ||^{2}>\frac{7}{8}\notag
 \intertext{so that}
 &\textrm{Re}\int_{Z}\dfrac{1}{N}\sum_{k=1}^{N}\pss{\pi_{z}(g_{k})x_{z}}{x_{z}}\,d\mu 
(z)> \frac{7}{8}\quad \textrm{for every}\, N\ge 1. \label{Eq10}
\end{align}
Now, assumption (\ref{Pro3}) of Theorem \ref{Theorem 3} states that there exists a countable set $\mathcal{C}_0$ of equiva\-lence classes of irreducible representations such that
 \begin{equation}\label{5.1}
 {\dfrac{1}{N}\ds\sum_{k=1}^{N}\pss{\pi (g_{k})x}{x }\longrightarrow 0\quad\textrm{ as } N\longrightarrow +\infty}
 \end{equation}
 for every irreducible representation $\pi$ whose equivalence class $[\pi]$ does not belong to $\mathcal{C}_0$ and every vector $x$ in the underlying Hilbert space.  It follows from (\ref{5.1}) that the set $Z_0=\{z\in Z\textrm{ ; } [\pi_z]\in\mathcal{C}_0\}$ satisfies $\mu(Z_0)>0$, and there exists $[\pi_0]\in\mathcal{C}_0$ such that $\mu(\{z\in Z\textrm{ ; }\pi_z \textrm{ and }\pi_0 \textrm{ are equivalent}\})>0$. Hence $\pi_0$ is a subrepresentation of $\pi$.
Since all irreducible representations of 
$G$ are supposed to be finite dimensional, $\pi $ has a finite dimensional 
subrepresentation. So assumption (\ref{Pro1}) of Theorem \ref{Theorem 0} 
is satisfied. As $\qq$ generates $G$, it now follows from 
Theorem \ref{Theorem 2} that $\qq$ is a \ka\ set in $G$.
\end{proof}
\begin{proof}[Proof of Theorem \ref{Theorem 1}]
The proof of Theorem \ref{Theorem 1} is exactly the same as that of 
Theorem \ref{Theorem 3}, using the fact that if $G$ is a locally compact 
abelian group (not necessarily second countable), any unitary 
representation of $G$ is equivalent to a direct integral of irreducible 
representations (see for instance \cite[Th.~7.36]{Fo}).
\end{proof}

\section{Examples and applications}\label{Section 7}
We present in this section some examples of \ka\ sets in different kinds of groups, some statements being obtained as consequences of Theorems \ref{Theorem 0} or \ref{Theorem 2}. We do not try to be exhaustive, and our aim here is rather to highlight some interesting phenomena which appear when looking for \ka\ sets, as well as the connections of these phenomena with some remarkable properties of the group. We begin with the simplest case, that of locally compact abelian (LCA) groups.
\subsection{\ka\ sets in locally compact abelian groups}\label{Subsection 8.1}
Let $G$ be a second countable LCA group, the dual group of which we denote by $\Gamma$.  If $\sigma $ is a finite Borel measure on $\Gamma $, recall that its Fourier-Stieljes transform is defined by 
\[
\widehat{\sigma }(g)=\int_{\Gamma }\gamma (g)\,d\sigma (\gamma )\quad\textrm{for every}\ g\in G.
\] 
It is an easy consequence of the spectral theorem for unitary representations that if $Q$ is a subset of  a second countable LCA group $G$, $\qq$ is a \ka\ set in $G$ if and only if there exists $\varepsilon >0$ such that any probability measure $\sigma $ on $\Gamma $ with $\sup_{g\in\qq}|\widehat{\sigma }(g)-1|<\varepsilon$ satisfies $\sigma (\{{1}\})>0$, where ${1}$ denotes the trivial character on $G$.
Using Theorem \ref{Theorem 2} combined with the spectral theorem for unitary representations again, we obtain the following stronger characterization of \ka\ sets which generate the group
in any second countable LCA group. 

\begin{theorem}\label{nimporte}
 Let $G$ be a second countable LCA group, and let $\qq$ a subset of $G$ which generates $G$. The following assertions are equivalent:
 \begin{enumerate}
 \item [(1)] $\qq$ is a \ka\ set in $G$;
\item[(2)] there exists $\delta\in (0,1)$ such that any probability measure $\sigma $ on $\Gamma $ with $\inf_{g\in\qq}|\widehat{\sigma }(g)|>\delta$
has a discrete part;
\item[(3)] there exists $\varepsilon >0$ such that any probability measure $\sigma $ on $\Gamma $ with $\sup_{g\in\qq}|\widehat{\sigma }(g)-1|<\varepsilon$
has a discrete part.
\end{enumerate}
\end{theorem}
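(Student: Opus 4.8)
The plan is to obtain all three equivalences from Theorem \ref{Theorem 2} by translating its conditions (a), (b), (c) into the measure-theoretic language of the statement through the spectral theorem for unitary representations of locally compact abelian groups. Recall the standard correspondence: to a probability measure $\sigma$ on $\Gamma$ one attaches the representation $M_\sigma$ of $G$ on $L^2(\Gamma,\sigma)$ defined by $(M_\sigma(g)f)(\chi)=\chi(g)f(\chi)$, for which the constant function $\mathbf 1$ is a cyclic unit vector with $\pss{M_\sigma(g)\mathbf 1}{\mathbf 1}=\widehat\sigma(g)$ for every $g\in G$; conversely, by the spectral theorem (see e.g.\ \cite[Th.~7.36]{Fo}), any representation $\pi$ of $G$ with a cyclic unit vector $x$ is unitarily equivalent, via a map sending $x$ to $\mathbf 1$, to $M_{\sigma_x}$, where $\sigma_x$ is the probability measure on $\Gamma$ with $\widehat{\sigma_x}(g)=\pss{\pi(g)x}{x}$. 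Since a unit vector $x$ with $\sup_{g\in Q}\|\pi(g)x-x\|<\varepsilon$ (resp.\ with $\inf_{g\in Q}|\pss{\pi(g)x}{x}|>\delta$) remains such a vector for the restriction of $\pi$ to the $\pi$-invariant cyclic subspace it generates, and since a finite dimensional subrepresentation of that restriction is one of $\pi$, it suffices to deal throughout with representations of the form $M_\sigma$.

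The substantial ingredient is the dictionary entry
\[
M_\sigma \text{ has a finite dimensional subrepresentation} \iff \sigma \text{ has a discrete part.}
\]
To prove it, note that since $G$ is abelian, any finite dimensional subrepresentation of $M_\sigma$, restricted to its (finite dimensional, invariant) underlying subspace, is a direct sum of characters, hence contains a non-zero vector in some eigenspace $E_\chi=\{f\in L^2(\Gamma,\sigma)\,;\,M_\sigma(g)f=\chi(g)f\ \text{for all}\ g\in G\}$; and, $G$ being second countable, $E_\chi$ is spanned by $\mathbf 1_{\{\chi\}}$, which is non-zero in $L^2(\Gamma,\sigma)$ precisely when $\sigma(\{\chi\})>0$. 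Conversely, any atom $\chi$ of $\sigma$ provides the one-dimensional subrepresentation $\mathbb C\,\mathbf 1_{\{\chi\}}$. (Equivalently, combine Proposition \ref{Proposition 4.1} with Corollary \ref{cor+}: applied to $M_\sigma$ and its cyclic vector $\mathbf 1$, the latter gives $m(|\widehat\sigma(\centerdot)|^2)=\sum_{\chi\in\Gamma}\sigma(\{\chi\})^2$, so $M_\sigma$ is weakly mixing, i.e.\ has no finite dimensional subrepresentation, exactly when $\sigma$ is atomless.)

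With these translations I would then close the cycle $(1)\Rightarrow(2)\Rightarrow(3)\Rightarrow(1)$. For $(1)\Rightarrow(2)$: since $Q$ generates the locally compact group $G$, Theorem \ref{Theorem 2} furnishes $\delta\in(0,1)$ as in its condition (b); applying (b) to $M_\sigma$ and $\mathbf 1$ for any probability measure $\sigma$ with $\inf_{g\in Q}|\widehat\sigma(g)|>\delta$ yields a finite dimensional subrepresentation of $M_\sigma$, hence a discrete part of $\sigma$. The implication $(2)\Rightarrow(3)$ is elementary: if (2) holds with $\delta$, set $\varepsilon=(1-\delta)/2>0$; then $\sup_{g\in Q}|\widehat\sigma(g)-1|<\varepsilon$ forces $|\widehat\sigma(g)|\ge 1-|\widehat\sigma(g)-1|>\delta$ for every $g\in Q$, so $\sigma$ has a discrete part by (2). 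For $(3)\Rightarrow(1)$: if (3) holds with $\varepsilon$, then given any representation $\pi$ of $G$ with a $(Q,\varepsilon)$-invariant unit vector $x$, passing to the cyclic subspace generated by $x$ and using $|\widehat{\sigma_x}(g)-1|=|\pss{\pi(g)x-x}{x}|\le\|\pi(g)x-x\|<\varepsilon$ shows that $\sigma_x$ has a discrete part, whence $\pi$ has a finite dimensional subrepresentation; thus condition (c) of Theorem \ref{Theorem 2} holds with constant $\varepsilon$, and $Q$ is a Kazhdan set in $G$.

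The main obstacle — the only place beyond bookkeeping — is the displayed dictionary entry: establishing that, in the abelian second countable setting, the finite dimensional subrepresentations of a cyclic representation are detected precisely by the atoms of its spectral measure. This is where second countability of $G$ is genuinely used (to concentrate a simultaneous eigenvector on a single character) and where Proposition \ref{Proposition 4.1} and the Wiener-type identity of Corollary \ref{cor+} enter. Everything else is the translation through the spectral theorem together with the elementary $\varepsilon$--$\delta$ matching described above.
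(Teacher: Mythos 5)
Your proposal is correct and follows exactly the route the paper intends: the paper gives no detailed argument for this theorem, stating only that it follows from Theorem \ref{Theorem 2} combined with the spectral theorem, and your write-up is a faithful and complete expansion of precisely that reduction (cyclic representations $\leftrightarrow$ probability measures on $\Gamma$, finite dimensional subrepresentations $\leftrightarrow$ atoms, then the cycle $(1)\Rightarrow(2)\Rightarrow(3)\Rightarrow(1)$). The one point the paper leaves entirely implicit --- that for a second countable LCA group the eigenspace $E_\chi$ of $M_\sigma$ is spanned by $\mathbf 1_{\{\chi\}}$, so finite dimensional subrepresentations are detected by atoms --- is exactly the point you isolate and justify correctly.
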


Theorem \ref{nimporte} becomes particularly meaningful in the case of the group $\Z$, as it yields a characterization of \ka\ subsets of $\Z$ involving some classic sets in harmonic analysis, introduced by Kaufman in \cite{Ka1}. They are called \emph{w-sets} by Kaufman \cite{Ka2}, and \emph{Kaufman sets} (\textbf{Ka} sets) by other authors, such as Hartman \cite{Ha1}, \cite{Ha2}.

\begin{definition}
Let $\qq$ be a subset of $\Z$, and let $\delta\in (0,1)$.

$\bullet$ We say that $\qq$ belongs to the class \textbf{Ka} if there exists a finite complex-valued  continuous Borel measure $\mu$ on $\T$ such that $\inf_{n\in\qq}|\hat{\mu }(n)|>0$, and to the class  \textbf{$\delta$-Ka} if there exists a finite complex-valued  continuous Borel measure $\mu$ on $\T$ with $\mu(\T)=1$ such that $\inf_{n\in\qq}|\hat{\mu }(n)|>\delta$.

$\bullet$ We say that $\qq$ belongs to the class \textbf{Ka}$^{+}$ if there exists a   continuous probability measure $\sigma$ on $\T$ such that $\inf_{n\in\qq}|\hat{\sigma}(n)|>0$, and to the class  \textbf{$\delta$-Ka$^{+}$} if there exists a  continuous probability measure $\sigma$ on $\T$  such that $\inf_{n\in\qq}|\hat{\sigma }(n)|>\delta$.
\end{definition}

Our characterization of \ka\ subsets of $\Z$ is given by Theorem \ref{nimportebis} below:

\begin{theorem}\label{nimportebis}
 Let $\qq$ a subset of $\Z$ which generates $\Z$. Then $\qq$ is a \ka\ set in $\Z$ if and only if there exists a $\delta\in(0,1)$ such that $\qq$ does not belong to \textbf{$\delta$-Ka$^{+}$}.
\end{theorem}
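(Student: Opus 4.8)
The plan is to derive Theorem~\ref{nimportebis} as the specialization of Theorem~\ref{nimporte} to the group $G=\Z$, whose dual group is $\Gamma=\T$; the only extra ingredient needed is the elementary fact that a positive probability measure $\sigma$ on $\T$ has a non-zero discrete part if and only if it fails to be continuous (non-atomic).

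First I would fix a threshold $\delta\in(0,1)$ and translate condition~(2) of Theorem~\ref{nimporte} for this $\delta$. The assertion ``every probability measure $\sigma$ on $\T$ with $\inf_{n\in\qq}|\widehat{\sigma}(n)|>\delta$ has a discrete part'' is equivalent to ``no \emph{continuous} probability measure $\sigma$ on $\T$ satisfies $\inf_{n\in\qq}|\widehat{\sigma}(n)|>\delta$'', that is, to $\qq\notin\textbf{$\delta$-Ka$^{+}$}$. Indeed, in one direction a continuous measure has no discrete part, so it cannot satisfy $\inf_{n\in\qq}|\widehat{\sigma}(n)|>\delta$ as soon as every measure satisfying that inequality has a discrete part; in the other direction, if $\qq\notin\textbf{$\delta$-Ka$^{+}$}$ and $\sigma$ is a probability measure with $\inf_{n\in\qq}|\widehat{\sigma}(n)|>\delta$, then $\sigma$ is not continuous, hence its discrete part is non-zero.

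Quantifying over $\delta\in(0,1)$, condition~(2) of Theorem~\ref{nimporte} for $G=\Z$ reads precisely ``there exists $\delta\in(0,1)$ such that $\qq\notin\textbf{$\delta$-Ka$^{+}$}$''. Since $\Z$ is a second countable LCA group and $\qq$ generates $\Z$, Theorem~\ref{nimporte} asserts that $\qq$ is a \ka\ set in $\Z$ if and only if condition~(2) holds, and the previous paragraph identifies condition~(2) with the condition in the statement to be proved; combining the two yields Theorem~\ref{nimportebis}. There is essentially no genuine obstacle here: all the analytic substance is already contained in Theorem~\ref{nimporte} (which in turn rests on Theorem~\ref{Theorem 2} and the spectral theorem for unitary representations), and the only point that demands a little care is keeping track of the quantifier on $\delta$ and of the passage between ``has a discrete part'' and ``is not continuous''. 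Should one prefer a self-contained argument in the case of $\Z$, one could instead rerun the proof of Theorem~\ref{nimporte} with the spectral measure of a single unitary operator, but invoking Theorem~\ref{nimporte} directly is the cleaner route.
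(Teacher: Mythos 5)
Your proposal is correct and follows exactly the route the paper intends: Theorem~\ref{nimportebis} is stated there as an immediate specialization of Theorem~\ref{nimporte} to $G=\Z$, $\Gamma=\T$, with condition~(2) translated via the observation that a probability measure has a non-zero discrete part precisely when it is not continuous. Your handling of the quantifier on $\delta$ and of this translation is accurate, so nothing is missing.
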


It is interesting to remark \cite{Ha2} that a set $\qq$ belongs to \textbf{Ka} if and only if it belongs to \textbf{$\delta$-Ka} for every $\delta\in (0,1)$. There is no similar statement for the class \textbf{Ka}$^{+}$: any sufficiently lacunary subset of $\Z$, such as $\qq=\{3^k+k \; ;\; k\ge 1\}$, is easily seen to belong to \textbf{Ka}$^{+}$ (it suffices to consider an associated Riesz product -- see for instance \cite{HMP} for details);
but the same reasoning as in Example \ref{Example B} below shows that this set $\qq$ is a \ka\ subset of $\Z$. Thus there exists by Theorem \ref{nimportebis} a $\delta\in (0,1)$ such that $\qq$
does not belong to \textbf{$\delta$-Ka$^{+}$}.
\par\smallskip
We present now some typical examples of \ka\ sets in $\Z$ or $\R$ obtained using the above characterizations. The first one provides a negative answer to Question \ref{Question 1} (a).
\begin{example}\label{Example B}
 The set $\qq=\{2^{k}+k\,;\,k\ge 0\}$ is a \ka\ set in $\Z$ and there are irrational numbers $\theta$ such that $(e^{2i\pi n\theta })_{n\in\qq}$ is not dense in $\T$. 
 In particular, no rearrangement $(m_k)_{k\,\ge 1}$ of the elements of $Q$ exists such that $(e^{2i\pi m_{k}\theta })_{k\,\ge 1}$ is equidistributed in 
$\T$ for every irrational number $\theta$.
 \end{example}
\begin{proof}{}
The sequence $(n_{k})_{k\ge 0}$ defined by $n_{k}=2^{k}+k$ for every $k\ge 0$ satisfies the relation $2n_{k}=n_{k+1}+k-1$ for every $k\ge 0$. 
Let $\sigma $ be a probability measure on $\T$ such that $\sup_{k\ge 0}|\wh{\sigma }(n_{k})-1|<1/18$. Since, by the Cauchy-Schwarz inequality,
  \[
  |\wh{\sigma }(k)-1|\le\int_{\T}|\lambda^{k}-1|d\sigma(\lambda)\le\sqrt{2}\,|\wh{\sigma }(k)-1|^{1/2}\quad\textrm{ for every } k\in\Z,
  \]
  we have
 \begin{align*}
|\wh{\sigma }(k-1)-1|&\le 2 \int_{\T}|\lambda^{n_{k}}-1|d\sigma(\lambda)
+\int_{\T}|\lambda^{n_{k+1}}-1|d\sigma(\lambda)\\
&\le 2\sqrt{2}\,
|\wh{\sigma }(n_{k})-1|^{1/2}+\sqrt{2}\,|\wh{\sigma }(n_{k+1})-1|^{1/2}
\end{align*}
for all $k\ge 1$,
so
that $\sup_{k\ge 0}|\wh{\sigma }(k)-1|<1$. Since
\[
\dfrac{1}{N}\sum_{k=1}^{N}\wh{\sigma }({k})=\int_{\T}\Bigl( \dfrac{1}{N}\sum_{k=1}^{N}\lambda ^{{k}}\Bigr)d\sigma (\lambda )\!\xymatrix@C=17pt{\ar[r]&}\!\sigma (\{1\}) \;\; \textrm{ as } N\!\xymatrix@C=17pt{\ar[r]&}\!+\infty,
\]
we have $\sigma(\{1\})>0$.
So $Q=\{n_{k}\,;\, k\ge 0\}$ is a \ka\ set in $\Z$. But $(n_{k})_{k\ge 0}$ being lacunary, it follows from a result proved independently by Pollington \cite{Pol} and De Mathan \cite{DM} that there exists a subset $A$ of $[0,1]$ of Hausdorff measure $1$ such that for every $\theta $ in $ A$, the set
 $Q\theta = \{n_{k}\theta \textrm{ ; } k\ge 0\}$ is not dense modulo $1$. One of these numbers $\theta $ is irrational, and the conclusion follows.
\end{proof}

\begin{example}\label{Example Bbis}
The set $\qq'=\{2^{k}\,;\,k\ge 0\}$ is not a \ka\ set in $\Z$.
\end{example}
\begin{proof}{}
The fact that $\qq'$ is not a \ka\ set in $\Z$ relies on the observation that $2^{k}$ divides $2^{k+1}$ for every $k\ge 0$. Using the same construction as the one of \cite[Prop.~3.9]{EG}, we consider for any fixed $\varepsilon>0$ a decreasing sequence  $(a_{j})_{j\ge 1}$ of positive real numbers with $a_{1}<\varepsilon/(2\pi) $ such that the series $\sum_{j\ge 1}a_{j}$ is divergent. Then the infinite convolution of two-points Dirac measures
\[
\sigma = \underset {j\geq 1}\Asterisk\bigl( (1-a_{j})\delta _{\{1\}}+a_{j}\delta _{\{e^{i\pi 2^{-j+1}}\}}\bigr)
\]
is a well-defined probability measure on $\T$, which is continuous by the assumption that the series $\sum_{j\ge 1}a_{j}$ diverges. For every $k\ge 0$,
\[
\wh{\sigma }(2^{k})=\prod_{j\ge 1}\bigl( 1-a_{j}+a_{j}e^{i\pi 2^{k-j+1}}\bigr)
=\prod_{j\ge k+1}\bigl( 1-a_{j}(1-e^{i\pi 2^{k-j+1}})\bigr).
 \]
 As $| 1-a_{j}(1-e^{i\pi 2^{k-j+1}})|\le 1$, it follows that 
\[
|\wh{\sigma }(2^{k})-1|\le\sum_{j\ge k+1}a_{j}|1-e^{i\pi 2^{k-j+1}}|\le \pi \,a_{k+1}\,2^{k+1}\sum_{j\ge k+1}2^{-j}=2\pi a_{k+1}<\varepsilon
\]
for every  $k\ge 0$. 
This proves that $\qq'$ is not a \ka\ set in $\Z$.
\end{proof}

\begin{example}\label{Example C} 
 If $p$ is a non-constant polynomial with integer coefficients such that $p(\Z)$ is included in $a\Z$ for no integer $a$ with $|a|\ge 2$, then $\qq=\{p(k)\,;\,k\ge 0\}$ is a \ka\ set in $\Z$.
\end{example}

 \begin{proof}
 Our assumption that $p(\Z)$ is included in $a\Z$ for no integer $a$ with $|a|\ge 2$ implies that $Q$ generates $\Z$. Since
the sequence $(\lambda ^{p(k)})_{k\ge 0}$ is uniformly distributed in $\T$ for every  $\lambda =e^{2i\pi \theta }$ with $\theta $ irrational (see for instance \cite[Th. 3.2]{KuiNied}), Theorem \ref{Theorem 3} implies that $Q$ is a Kazhdan set in $\Z$.
\end{proof}

\begin{example}\label{Example 8.5}
 Let $p$ be a non-constant real polynomial, and let $\qq=\{p(k)\,;\,k\ge 0\}$. Then $(-\delta ,\delta )\cup \qq$ is a \ka\ subset of $\R$ for any $\delta >0$.
\end{example}
\begin{proof}
 Write $p$ as $p(x)=\sum_{j=0}^{d}a_{j}x^{j}$, $d\ge 1$, and let $r\in\{1,\dots,d\}$ be such that $a_{r}\neq 0$. It is well-known (see for instance \cite[Th. 3.2]{KuiNied}) that the sequence $(e^{2i\pi tp(k)})_{k\in\Z}$ is uniformly distributed in $\T$ as soon as  $ta_{r}$ is irrational. This condition excludes only countably many values of $t$. Set now $W_{n}=(-n,n)$ for every integer $\gn$. Thanks to Theorem \ref{Theorem 0}, we obtain that there exists $\gn$ such that $(-n,n)\cup\qq$ is a \ka\ set in $\R$. Let $\varepsilon >0$ be a \ka\ constant for this set. Fix $\delta >0$. In order to prove that $(-\delta ,\delta )\cup\qq$ is a \ka\ set in $\R$, we consider a positive number $\gamma $, which will be fixed  later on, and let $\sigma $ be a probability measure on $\R$ such that $\sup_{t\in(-\delta ,\delta )\cup\qq}\,\bigl|\widehat{\sigma }(t)-1\bigr|<\gamma $.
For any $a\in\N$ and any $t\in(-\delta ,\delta )$,
\[
 2(1-\textrm{Re}\,\widehat{\sigma }(at))=\int_{\R}\bigl|e^{iatx}-1\bigr|^{2}d\sigma (x)\le a^{2}\int_{\R}\bigl|e^{itx}-1\bigr|^{2}d\sigma (x)\le 2a^{2} \textrm{Re}\,(1-\widehat{\sigma }(t))
\]
so that $\sup_{t\in(\delta ,\delta )}\,(1-\textrm{Re}\,\widehat{\sigma }(at))<a^{2}\gamma $. If we choose $a>n/\delta $ and $\gamma <\min(\varepsilon ,\varepsilon ^{2}/(2a^{2}))$, we obtain that 
$
\sup_{t\in(-n,n)\cup\qq}\,\bigl|1-\widehat{\sigma }(t)\bigr|<\varepsilon,
$
and since $\varepsilon $ is a \ka\ constant for $(-n,n)\cup\qq$, $\sigma (\{0\})>0$. Hence $\gamma $ is a \ka\ constant for $(-\delta ,\delta )\cup\qq$.
\end{proof}

\begin{remark}\label{Remark 8.6}
 It is necessary to add a small interval to the set $\qq$ in order to turn it into a \ka\ subset of $\R$, even when $\qq$ generates a dense subgroup of $\R$. Indeed, consider the polynomial $p(x)=x+\sqrt{2}$. The set $\qq=\{k+\sqrt{2}\,;\,k\ge 0\}$ is not a \ka\ set in $\R$: for any $\varepsilon >0$, let $b\in\N$ be such that $|e^{2i\pi b\sqrt{2}}-1|<\varepsilon $. The measure 
$\sigma$ defined as the Dirac mass at the point ${2\pi b}$ satisfies $\sup_{k\ge 0}\,|\widehat{\sigma }(k+\sqrt{2})-1|<\varepsilon $, so that $\qq$ is not a \ka\ set in $\R$.\end{remark}

 We finish this section by exhibiting a link between \ka\ subsets of $\Z^{d}$ and 
 \ka\ subsets of $\R^{d}$, $d\ge 1$. Let $Q$ be a subset of $\Z^{d}$. Seen as a subset of $\R^{d}$, $Q$ is never a \ka\ set. But as a consequence of Theorem \ref{Theorem 0}, we see that $Q$ becomes a \ka\ set in $\R^{d}$ if we add a small perturbation to it.
 
\begin{proposition}\label{Proposition D}
 Fix an integer $d\ge 1$, and let $(W_{n})_{\gn}$ be an increasing sequence of subsets of $\R^{d}$ such that $\bigcup_{\gn}W_{n}=\R^{d}$. Let $\qq$ be a \ka\ subset of $\Z^{d}$. There exists an $\gn$ such that  $W_{n}\cup \qq$ is a \ka\ set in $\R^{d}$. Also, $B(0,\delta )\cup Q$ is a \ka\ subset of $\R^{d}$ for any $\delta >0$, where $B(0,\delta )$ denotes the open unit ball of radius $\delta $ for the Euclidean norm on $\R^{d}$.
\end{proposition}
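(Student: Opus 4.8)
\textbf{Proof proposal for Proposition \ref{Proposition D}.}

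The plan is to derive both assertions from Theorem~\ref{Theorem 0} and Corollary~\ref{Corollary 1}, the main point being to verify that $\qq$, viewed as a subset of $\R^{d}$, satisfies assumption~(\ref{Pro1}) of Theorem~\ref{Theorem 0}. So I would start by fixing a \ka\ constant $\varepsilon_{0}>0$ for $\qq$ in $\Z^{d}$, and claim that this same $\varepsilon_{0}$ works in~(\ref{Pro1}) for $\qq$ regarded as a subset of $\R^{d}$. Let $\pi$ be a unitary representation of $\R^{d}$ on a Hilbert space $H$ having a $(\qq,\varepsilon_{0})$-invariant vector $x$. Since $\qq\subseteq\Z^{d}$, the vector $x$ is still $(\qq,\varepsilon_{0})$-invariant for the restriction $\pi|_{\Z^{d}}$; hence, since $\varepsilon_{0}$ is a \ka\ constant for $\qq$ in $\Z^{d}$, the representation $\pi|_{\Z^{d}}$ has a non-zero $\Z^{d}$-invariant vector.

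The key step is then to upgrade this to a finite dimensional subrepresentation of $\pi$ itself. Consider the closed subspace $H_{0}=\{v\in H\,;\,\pi(n)v=v\ \textrm{for every}\ n\in\Z^{d}\}$, which is non-zero by the previous paragraph. Since $\R^{d}$ is abelian, $\pi(g)$ commutes with $\pi(n)$ for all $g\in\R^{d}$ and $n\in\Z^{d}$, so $H_{0}$ is $\pi(\R^{d})$-invariant. The restriction $\pi|_{H_{0}}$ is thus a strongly continuous unitary representation of $\R^{d}$ that is trivial on $\Z^{d}$, hence factors through a strongly continuous unitary representation of the \emph{compact} quotient group $\R^{d}/\Z^{d}\cong\T^{d}$. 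By the Peter--Weyl theorem this factored representation decomposes as a direct sum of one-dimensional representations, so $\pi|_{H_{0}}$, and therefore $\pi$, has a one-dimensional subrepresentation. This establishes~(\ref{Pro1}) for $\qq$ with the constant $\varepsilon_{0}$.

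With~(\ref{Pro1}) verified, the first assertion follows at once: $\R^{d}$ being locally compact, Theorem~\ref{Theorem 0} applies to any increasing sequence $(W_{n})_{\gn}$ of subsets of $\R^{d}$ with union $\R^{d}$ (no condition on $W_{1}$ being needed), and provides an integer $\gn$ such that $W_{n}\cup\qq$ is a \ka\ set in $\R^{d}$. For the last assertion, fix $\delta>0$. The open ball $B(0,\delta)$ has non-empty interior and generates $\R^{d}$, because the subgroup it generates is an open, hence closed, non-empty subgroup of the connected group $\R^{d}$. Applying Corollary~\ref{Corollary 1} with $\qq_{0}=B(0,\delta)$ then shows that $B(0,\delta)\cup\qq$ is a \ka\ set in $\R^{d}$.

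I expect the only genuine difficulty to be the middle step, namely the passage from ``$\pi$ has a non-zero $\Z^{d}$-fixed vector'' to ``$\pi$ has a finite dimensional subrepresentation'': the point is to notice that the $\Z^{d}$-fixed subspace is automatically invariant under all of $\R^{d}$ by commutativity, and that $\R^{d}/\Z^{d}$ is compact, so that compactness of the quotient produces the finite dimensional piece. Everything else is a routine invocation of Theorem~\ref{Theorem 0} and Corollary~\ref{Corollary 1}.
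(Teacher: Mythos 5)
Your proof is correct, but it takes a genuinely different route from the paper's at the key step. The paper verifies assumption~(\ref{Pro1}) by writing $\pi$ as a direct integral $\int_Z^{\oplus}\pi_z\,d\mu(z)$ of characters $\ttt\mapsto e^{2i\pi\pss{\ttt}{\tttheta_z}}$ of $\R^d$ (which forces it to assume $H$ separable), transferring the $(\qq,\cdot)$-invariant vector to an explicitly constructed representation $\rho$ of $\Z^d$ on $L^2(Z,\mu)$, extracting a $\Z^d$-invariant $f$, deducing that $\tttheta_z\in\Z^d$ on a set of positive measure, and then pigeonholing over $\nnn\in\Z^d$ to isolate a one-dimensional subrepresentation. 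You replace all of this by the abstract observation that the $\Z^d$-fixed subspace $H_0$ is $\pi(\R^d)$-invariant by commutativity and that $\pi|_{H_0}$ factors through the compact quotient $\T^d$, so Peter--Weyl produces the one-dimensional subrepresentation; this avoids both the direct integral machinery and the separability reduction, and your direct restriction argument also lets you keep the original \ka\ constant $\varepsilon_0$ rather than passing to $\varepsilon^2/2$ as the paper does. For the last assertion the paper takes $W_n=B(0,n)$ and then runs the rescaling argument of Example~\ref{Example 8.5} (comparing $\widehat{\sigma}(a\ttt)$ with $\widehat{\sigma}(\ttt)$), whereas you simply feed $\qq_0=B(0,\delta)$, which is open and generating, into Corollary~\ref{Corollary 1}; this is a legitimate and in fact more economical use of the paper's own corollary. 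Both approaches are sound; yours is shorter and slightly more general, while the paper's explicit direct-integral computation has the merit of exhibiting the invariant vectors concretely in a form reused in the proofs of Theorems~\ref{Proposition F} and~\ref{Proposition N}.
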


\begin{proof}
 Let $\varepsilon >0$ be a \ka\ constant for $\qq$, seen as a subset of $\Z^{d}$. Let $\pi $ be a representation of $\R^{d}$ on a separable Hilbert space $H$ which admits a $(\qq,\varepsilon ^{2}/2)$-invariant vector $x\in H$. 
Without loss of generality we can suppose that $\pi $ is a direct integral on a Borel space $Z$, with respect to a finite measure $\mu $ on $Z$, of a family $(\pi _{z})_{z\in\Z}$ of irreducible representations of $\R^{d}$. So $\pi $ is a representation of $\R^{d}$ on $L^{2}(Z,\mu )$. We write elements $f$ of $L^{2}(Z,\mu )$ as $f=(f_{z})_{z\in Z}$.
We suppose that $||x||=1$; our hypothesis implies that
\[
\sup_{\ttt\in\qq}\,\bigl|1-\pss{\pi (\ttt)x}{x}\bigr|<\dfrac{\varepsilon ^{2}}{2}\cdot 
\]
Each representation $\pi _{z}$ acts  on vectors $\ttt=(t_{1},\dots,t_{d})$ of $\R^{d}$ as 
$\pi _{z}(\ttt)=\exp(2i\pi \pss{\ttt}{\tttheta _{z}})$ for some vector $\tttheta _{z}=(\theta _{1,z},\dots,\theta _{d,z})$ of $\R^{d}$. Hence
\[
\sup_{\ttt\in\qq}\,\Bigl|1-\int_{Z}e^{2i\pi \pss{\ttt}{\tttheta _{z}}}|x_z|^{2}d\mu (z)\Bigr|<\dfrac{\varepsilon ^{2}}{2}\cdot 
\]
Consider now the representation $\rho $ of $\Z^{d}$ on $L^{2}(Z,\mu )$ defined by 
$\smash{\xymatrix{\rho (\nnn)\,f:z\ar@{|->}[r]&e^{2i\pi \pss{\nnn}{\tttheta _{z}}}f_{z}}\!}$ for every 
$\nnn=(n_{1},\dots,n_{d})\in\Z^{d}$ and every $f\in L^{2}(Z,\mu )$. We have
\[
\sup_{\nnn\in\qq}||\rho (\nnn)x-x||^{2}\le 2\sup_{\nnn\in\qq}\bigl|1-\pss{\rho (\nnn)x}{x}\bigr|<\varepsilon ^{2},
\]
and since $\varepsilon $ is a \ka\ constant for $\qq$ as a subset of $\Z^{d}$, $\rho $ has a non-zero $\Z^{d}$-invariant vector. There exists hence $f\in L^{2}(Z,\mu )$ with $||f||=1$ such that 
$\rho (\nnn)f=f$ for every $\nnn\in\Z^{d}$. Fix a representative of $f\in L^{2}(Z,\mu )$, and set 
$Z_{0}=\{z\in Z\,;\, f_{z}\neq 0\}$. Then $\mu (Z_{0})>0$. For every $z\in Z_{0}$ we have 
$e^{2i\pi \pss{\nnn}{\tttheta _{z}}}=1$ for every $\nnn\in \Z^{d}$, which implies that 
$\tttheta _{z}\in\Z^{d}$. For each $\nnn=(n_{1},\dots,n_{d})\in \Z^{d}$, let 
$Z_{\nnn}=\{z\in Z_{0}\,;\,\theta _{i,z}=n_{i}\ \textrm{for each}\ i\in\{1,\dots,n\}\}$ and
$H_{\nnn}=\{f\in L^{2}(Z,\mu )\,;\,f=0\ \mu\textrm{-a.\,e. on}\ Z\setminus Z_{\nnn}\}$. We have
$\bigcup_{\nnn\in\Z^{d}}Z_{\nnn}=Z_{0}$, so there exists $\nnn_{0}\in\Z^{d}$ such that $\mu (Z_{\nnn_{0}})>0$. Each subspace $H_{\nnn}$ is easily seen to be invariant for $\pi $, and the representation $\pi _{\nnn}$ induced by $\pi $ on $H_{\nnn}$ is given by $\smash{\xymatrix{\pi _{\nnn}(\ttt)\,f\, :\,z\ar@{|->}[r]&e^{2i\pi \pss{\ttt}{\nnn}}f_{z}}}$ for every $\ttt\in\R^{d}$ and every $f\in H_{\nnn}$. So $\pi $ admits a subrepresentation of dimension $1$ as soon as $H_{\nnn}$ is non-zero, i.\,e.\ as soon as $\mu (Z_{\nnn})>0$. Since $\mu (Z_{\nnn_{0}})>0$, $\pi$  admit a subrepresentation  of dimension $1$. An application of Theorem \ref{Theorem 2} now shows that $W_{n}\cup \qq$ is a \ka\ set in $\R^{d}$ for some $\gn$. If we choose $W_{n}=B(0,n)$ for every $n\ge 1$, and proceed as in the proof of Example \ref{Example 8.5}, we obtain that $B(0,\delta )\cup Q$ is a \ka\ set in $\R^{d}$ for every $\delta >0$.
\end{proof}
We now move out of the commutative setting, and present a characterization of \ka\ sets in the Heisenberg groups $H_{n}$.

\subsection{\ka\ sets in the Heisenberg groups $H_{n}$}\label{Subsection 8.2}
The Heisenberg group  of dimension $n\ge 1$, denoted by $H_{n}$, is formed of triples $(t,\qqq,\ppp)$ of $\R\times \R^{n}\times\R^{n}=\R^{2n+1}$. The group operation is given by
\[
(t_{1},\qqq_{1},\ppp_{1})\cdot (t_{2},\qqq_{2},\ppp_{2})=(t_{1}+t_{2}+\frac{1}{2}(
\ppp_{1}\cdot \qqq_{2}-\ppp_{2}\cdot\qqq_{1}),\qqq_{1}+\qqq_{2},\ppp_{1}+\ppp_{2}),
\]
where $\ppp\cdot\qqq$ denotes the scalar product of two vectors $\ppp$ and $\qqq$ of $\R^{n}$.
Irreducible unitary representations of $H_{n}$ are completely classified (see for instance \cite[Ch.~2]{T}, or \cite[Cor.~6.51]{Fo}): there are two distinct families of such representations, which we denote respectively by $(\mathcal{F}_{1})$ and $(\mathcal{F}_{2})$:
\par\smallskip 
-- the representations belonging to the family $(\mathcal{F}_{1})$ are representations of $H_{n}$ on $L^{2}(\R^{n})$. They are parametrized by an element of $\R$, which we write as $\pm\lambda $ with $\lambda >0$. Then 
$\pi _{\pm\lambda }(t,\qqq,\ppp)$, $(t,\qqq,\ppp)\in\R^{2n+1}$, acts on $L^{2}(\R^{n})$ as
\[
\xymatrix{
\pi _{\pm\lambda }(t,\qqq,\ppp)\,u\,:\,\xxx\ar@{|->}[r]&e^{i(\pm\lambda t\pm\sqrt{\lambda }\qqq\cdot \xxx+\frac{\lambda }{2}\qqq\cdot\ppp)}u(\xxx+\sqrt{\lambda }\,\ppp)
}
\]
where $u$ belongs to $L^{2}(\R^{n})$. 
These representations have the following important property, which will appear again in the next subsection:
\begin{fact}\label{Fact E}
 For every $\pm\lambda \in\R$ and every $u,v\in L^{2}(\R^{n})$, 
\[
\xymatrix{\pss{\pi _{\pm\lambda }(t,\qqq,\ppp)\,u}{v}\ar[r]&0}\quad\textrm{as}\quad
\xymatrix{|\ppp|\ar[r]&+\infty .}
\]
\end{fact}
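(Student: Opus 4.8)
The plan is to dominate the matrix coefficient $\pss{\pi _{\pm\lambda }(t,\qqq,\ppp)\,u}{v}$ in absolute value by a convolution of two functions lying in $L^{2}(\R^{n})$, and then to invoke the classical fact that such a convolution belongs to $C_{0}(\R^{n})$, i.e.\ vanishes at infinity.

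First I would write out $|\pss{\pi _{\pm\lambda }(t,\qqq,\ppp)\,u}{v}|$ starting from the explicit formula for the action of $\pi _{\pm\lambda }$. All the exponential factors $e^{\pm i\lambda t}$, $e^{\pm i\sqrt{\lambda }\,\qqq\cdot\xxx}$ and $e^{i\frac{\lambda }{2}\qqq\cdot\ppp}$ are of modulus $1$; the last two (as functions of the integration variable $\xxx$, only the middle one depends on $\xxx$) can be discarded after taking absolute values, so
\[
\bigl|\pss{\pi _{\pm\lambda }(t,\qqq,\ppp)\,u}{v}\bigr|\le\int_{\R^{n}}\bigl|u(\xxx+\sqrt{\lambda }\,\ppp)\bigr|\,\bigl|v(\xxx)\bigr|\,d\xxx.
\]
Setting $\phi=|u|$ and letting $\psi$ be the function $\xxx\mapsto|v(-\xxx)|$, both of which belong to $L^{2}(\R^{n})$ with $\|\phi\|_{2}=\|u\|$ and $\|\psi\|_{2}=\|v\|$, the right-hand side is precisely the value $(\phi*\psi)(\sqrt{\lambda }\,\ppp)$ of the convolution product $\phi*\psi$ at the point $\sqrt{\lambda }\,\ppp$.

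Second, I would use the standard fact that $\phi*\psi\in C_{0}(\R^{n})$ whenever $\phi,\psi\in L^{2}(\R^{n})$. This convolution is well defined, bounded and continuous by the Cauchy--Schwarz inequality together with the continuity of translation in $L^{2}(\R^{n})$, and it vanishes at infinity because this is obvious when $\phi$ and $\psi$ both have compact support (the supports of $\phi(\,\cdot+z)$ and $\psi$ are then disjoint for $|z|$ large, so the integral is $0$), and compactly supported functions are dense in $L^{2}(\R^{n})$. Hence $(\phi*\psi)(z)\to0$ as $|z|\to+\infty$; since $\lambda >0$ is fixed, $|\sqrt{\lambda }\,\ppp|\to+\infty$ as $|\ppp|\to+\infty$, and the desired conclusion follows.

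There is essentially no obstacle in this argument: the only step requiring a (routine) justification is the membership of $\phi*\psi$ in $C_{0}(\R^{n})$, which one may simply quote. I would also point out that the bound above does not depend on $t$ or on $\qqq$, so that the convergence $\pss{\pi _{\pm\lambda }(t,\qqq,\ppp)\,u}{v}\to0$ is in fact uniform with respect to these two variables, a point likely to be convenient when this fact is used later in the paper.
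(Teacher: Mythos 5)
Your proof is correct and follows essentially the same route as the paper's: the paper reduces to compactly supported $u,v$ (for which the supports of $u(\cdot+\sqrt{\lambda}\,\ppp)$ and $v$ become disjoint once $|\ppp|$ is large) and then approximates in $L^{2}$, which is exactly the content of the standard fact $L^{2}*L^{2}\subseteq C_{0}(\R^{n})$ that you invoke after dominating the matrix coefficient by $(\,|u|*|v(-\cdot)|\,)(\sqrt{\lambda}\,\ppp)$. Your added observation that the bound is uniform in $t$ and $\qqq$ is accurate and is indeed what is implicitly needed when Fact \ref{Fact E} is applied in the proof of Theorem \ref{Proposition F}.
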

\begin{proof}
 This follows directly from the dominated convergence theorem if $u$ and $v$ have compact support in $\R^{n}$. It then suffices to approximate $u$ and $v$ by functions with compact support to get the result.
\end{proof}
\par\smallskip 
-- the representations belonging to the family $(\mathcal{F}_{2})$ are one-dimensional. They are parametrized by elements $(\yyy,\eeeta )$ of $\R^{2n}$: for every $(t,\qqq,\ppp)\in H_{n}$,
\[
\pi _{\yyy,\eeeta }(t,\qqq, \ppp)=e^{i(\yyy\cdot\qqq+\eeeta\cdot \ppp)}.
\]
\par\smallskip 
We denote by $\pi_n$ the projection $(t,\qqq,\ppp)\longmapsto (\qqq,\ppp)$ of $H_{n}$ onto $\R^{2n}$.
Our main result concerning \ka\ sets in $H_{n}$ is the following:

\begin{theorem}\label{Proposition F}
 Let $\qq$ be a subset of the Heisenberg group $H_{n}$, $n\geq 1$. The following assertions are equivalent:
\begin{enumerate}
 \item [(1)] $\qq$ is a \ka\ set in $H_{n}$;
\item[(2)] $\pi_{n}(\qq)$ is a \ka\ set in $\R^{2n}$.
\end{enumerate}
\end{theorem}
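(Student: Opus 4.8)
The plan is to prove the two implications separately, the second being the substantial one. Throughout, write $Z=\{(t,\0,\0)\,;\,t\in\R\}$ for the center of $H_{n}$; from the formula for the group law, $Z$ is exactly the kernel of $\pi_{n}$, so $\pi_{n}$ identifies with the quotient map $H_{n}\to H_{n}/Z$ and identifies $H_{n}/Z$ with $\R^{2n}$. The implication $(1)\Rightarrow(2)$ is then formal and valid for any quotient map: if $\varepsilon$ is a \ka\ constant for $\qq$ in $H_{n}$ and $\rho$ is a unitary representation of $\R^{2n}$ admitting a $(\pi_{n}(\qq),\varepsilon)$-invariant vector $v$, then $\rho\circ\pi_{n}$ is a unitary representation of $H_{n}$ for which $v$ is $(\qq,\varepsilon)$-invariant (as $g$ runs over $\qq$, $\pi_{n}(g)$ runs over $\pi_{n}(\qq)$). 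Hence $\rho\circ\pi_{n}$ has a non-zero $H_{n}$-invariant vector $w$; since $\pi_{n}$ is onto, $w$ is $\R^{2n}$-invariant for $\rho$, so $\varepsilon$ is a \ka\ constant for $\pi_{n}(\qq)$ in $\R^{2n}$.

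For $(2)\Rightarrow(1)$, fix a \ka\ constant $\varepsilon\in(0,\sqrt 2)$ for $\pi_{n}(\qq)$ in $\R^{2n}$. First, $\pi_{n}(\qq)$ is unbounded in $\R^{2n}$: a relatively compact subset of $\R^{2n}$ is never a \ka\ set, because for $v\ne\0$ and $t>0$ small the character $\nnn\mapsto e^{it\pss{\nnn}{v}}$ has almost‑invariant (unit scalar) vectors on that set but no non‑zero invariant vector. Pick $g_{k}\in\qq$ with $|\pi_{n}(g_{k})|\to+\infty$. Let $\delta\in(0,1)$ be small, to be specified later, and let $\pi$ be any unitary representation of $H_{n}$ on a Hilbert space $H$ with a $(\qq,\delta)$-invariant unit vector $x$; it suffices to produce a non‑zero $H_{n}$-invariant vector. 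Replacing $H$ by the cyclic subspace generated by $x$, we may assume $H$ separable, since $H_{n}$ is second countable. Let $H_{0}$ be the subspace of $\pi(Z)$-fixed vectors and $H_{1}=H_{0}^{\perp}$; both are $\pi$-invariant because $Z$ is central. On $H_{0}$ the representation $\pi$ is trivial on $Z$, hence $\pi|_{H_{0}}=\rho\circ\pi_{n}$ for some unitary representation $\rho$ of $\R^{2n}$. On $H_{1}$ it has no non‑zero $Z$-fixed vector, so, disintegrating $\pi|_{H_{1}}$ over $\widehat{H_{n}}$, almost every irreducible constituent has non‑trivial central character, i.e.\ belongs to the family $(\mathcal F_{1})$.

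The key point is that any $\sigma$ in $(\mathcal F_{1})$ satisfies $|\pss{\sigma(g)\xi}{\eta}|\to 0$ as $\pi_{n}(g)\to\infty$ in $\R^{2n}$, for all vectors $\xi,\eta$: indeed $|\pss{\pi_{\pm\lambda}(t,\qqq,\ppp)u}{v}|$ depends on $(t,\qqq,\ppp)$ only through $(\qqq,\ppp)=\pi_{n}(g)$, and tends to $0$ both as $|\ppp|\to\infty$ (Fact \ref{Fact E}) and as $|\qqq|\to\infty$ (Riemann--Lebesgue applied to the oscillatory factor $e^{\pm i\sqrt\lambda\,\qqq\cdot\xxx}$). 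Writing $x=x_{0}\opl x_{1}$ and applying dominated convergence in the disintegration of $\pi|_{H_{1}}$ (the integrand being dominated by $z\mapsto\|(x_{1})_{z}\|^{2}\in L^{1}$), we get $\pss{\pi(g_{k})x_{1}}{x_{1}}\to 0$. On the other hand $\|\pi(g)x_{1}-x_{1}\|\le\|\pi(g)x-x\|<\delta$ for every $g\in\qq$, so $|\pss{\pi(g)x_{1}}{x_{1}}|\ge\|x_{1}\|^{2}-\delta^{2}/2$; taking $g=g_{k}$ and letting $k\to\infty$ gives $\|x_{1}\|^{2}\le\delta^{2}/2$, hence $\|x_{0}\|^{2}\ge 1-\delta^{2}/2>0$. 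Since $\|\rho(\pi_{n}(g))x_{0}-x_{0}\|=\|\pi(g)x_{0}-x_{0}\|\le\|\pi(g)x-x\|<\delta$ for $g\in\qq$, the vector $x_{0}/\|x_{0}\|$ is $\bigl(\pi_{n}(\qq),\delta(1-\delta^{2}/2)^{-1/2}\bigr)$-invariant for $\rho$; choosing $\delta$ so small that $\delta(1-\delta^{2}/2)^{-1/2}<\varepsilon$, it is $(\pi_{n}(\qq),\varepsilon)$-invariant, so $\rho$ has a non‑zero $\R^{2n}$-invariant vector $w\in H_{0}$. Then $\pi(g)w=\rho(\pi_{n}(g))w=w$ for every $g\in H_{n}$, the required non‑zero $H_{n}$-invariant vector; thus $\delta$ is a \ka\ constant for $\qq$ in $H_{n}$. (One may equally read the last step as a verification of assumption (\ref{Pro1}) of Theorem \ref{Theorem 0}, but here the conclusion comes out directly.)

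The main obstacle is the treatment of the $H_{1}$-part: one must know that every irreducible representation of $H_{n}$ with non‑trivial central character has matrix coefficients vanishing at infinity modulo $Z$ — which combines the classification of the $(\mathcal F_{1})$ representations, Fact \ref{Fact E}, and a Riemann--Lebesgue estimate in the $\qqq$-variable — and then push this through a direct‑integral argument with dominated convergence, after the harmless reduction to a separable Hilbert space. Everything else is routine bookkeeping with the quotient map $\pi_{n}$ and with $(\qq,\varepsilon)$-invariant vectors.
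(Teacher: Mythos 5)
Your argument is correct and follows the same overall strategy as the paper for the substantial implication $(2)\Rightarrow(1)$: isolate the constituents trivial on the centre, kill the orthogonal part via decay of matrix coefficients along elements of $\qq$ escaping to infinity, then invoke the Kazhdan property of $\pi_{n}(\qq)$. Three local differences are worth recording. For $(1)\Rightarrow(2)$ you pull representations of $\R^{2n}$ back along the quotient map, which is cleaner than the paper's detour through probability measures and the continuity argument giving $\sigma(\{(\0,\0)\})>0$. You also split $H$ into the $Z$-fixed subspace and its orthocomplement before disintegrating, which lets you bound $\|x_{1}\|^{2}$ by a limit along $(g_{k})$ rather than by one well-chosen element of $\qq$. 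Finally, the paper's Lemma \ref{Lemma G} supplies elements with $|\ppp|$ arbitrarily large, exactly what Fact \ref{Fact E} needs; you use only unboundedness of $|(\qqq,\ppp)|$ and therefore need joint vanishing of $(\mathcal{F}_{1})$-coefficients as $|(\qqq,\ppp)|\to\infty$. Your justification (``tends to $0$ both as $|\ppp|\to\infty$ and as $|\qqq|\to\infty$'') does not literally yield the joint limit, since iterated limits do not imply a limit in $|(\qqq,\ppp)|$. The statement is true and the fix is short: for compactly supported $u,v$ the coefficient vanishes identically once $|\ppp|$ exceeds some $R$, uniformly in $\qqq$, while for $|\ppp|\le R$ the functions $x\mapsto u(x+\sqrt{\lambda}\,\ppp)\overline{v(x)}$ form a compact subset of $L^{1}(\R^{n})$, over which the Riemann--Lebesgue lemma is uniform. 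With that sentence added, the proof is complete.
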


\begin{proof} We set 
$\qq_0=\pi_{n}(\qq)$.
 The proof of Theorem \ref{Proposition F} relies on the same kind of ideas as those employed in the proof of Proposition \ref{Proposition D}. We start with the easy implication, which is that $(1)$ implies $(2)$. Suppose that $\qq$  is a \ka\ set in $H_{n}$, and let $\varepsilon >0$ be a \ka\ constant for $\qq$. 
 Let $\sigma $ be a probability measure on $\R^{2n}$ such that
\begin{equation}\label{Equation 20}
 \sup_{(\qqq,\ppp)\in\qq_{0}}\,\bigl|\widehat{\sigma }(\qqq,\ppp)-1\bigr|=\sup_{(\qqq,\ppp)\in\qq_{0}}
\Bigl|\int_{\R^{2n}}e^{i(\yyy\cdot \qqq+\eeeta \cdot\ppp)}d\sigma (\yyy,\eeeta )-1\Bigr|<\dfrac{\varepsilon^{2}}{2} 
\end{equation}
and consider the representation $\rho $ of $H_{n}$ on $L^{2}(\R^{2n},\sigma )$ defined by 
\[
\xymatrix{
\rho (t,\qqq,\ppp)f\,:\,(\yyy,\eeeta )\ar@{|->}[r]&e^{i(\yyy\cdot\qqq+\eeeta \cdot\ppp)}f(\yyy,\eeeta )
}
\]
for every $(t,\qqq,\ppp)\in H_{n}$ and every $f\in L^{2}(\R^{2n},\sigma )$. Then (\ref{Equation 20}) implies that the constant function $\mathbbm{1}$ is a $(\qq,\varepsilon )$-invariant vector for $\rho $. Since $(\qq,\varepsilon )$ is a \ka\ pair in $H_{n}$, it follows that $\rho $ admits a non-zero $H_{n}$-invariant function $f\in L^{2}(\R^{2n},\sigma )$. Fix a representative of $f$, and consider the subset $A$ of $\R^{2n}$ consisting of pairs $(\yyy,\eeeta)$ such that $f(\yyy,\eeeta )\neq 0$. Then $\sigma (A)>0$, and for every $(\qqq,\ppp)\in\R^{2n}$, $\sigma $-almost every element $(\yyy,\eeeta )$ of $A$ satisfies $\yyy\cdot\qqq+\eeeta \cdot\ppp\in 2\pi \Z$.
Hence $\sigma $-almost every element $(\yyy,\eeeta )$ of $A$ has the property that $\yyy\cdot\qqq+\eeeta \cdot\ppp\in 2\pi \Z$
for every $(\qqq,\ppp)\in\Q^{2n}$. By continuity, $\sigma $-almost every element $(\yyy,\eeeta )$ of $A$ has the property that $\yyy\cdot\qqq+\eeeta \cdot\ppp$ belongs to $2\pi \Z$ for every $(\qqq,\ppp)\in\R^{2n}$, so that $(\yyy,\eeeta )=(\zzzero,\zzzero)$. We have thus proved that 
$\sigma (\{\zzzero,\zzzero\})>0$, and it follows that $\qq_{0}$ is a \ka\ set in $\R^{2n}$. 
\par\smallskip
Let us now prove the converse implication. Suppose that $\qq_{0}$ is a \ka\ set in $\R^{2n}$, and let $0<\varepsilon <3$ be a \ka\ constant for  $\qq_{0}$.  
 Let $\pi $ be a unitary representation of $H_{n}$ on a separable Hilbert space $H$, which admits a  $(\qq, \frac{\varepsilon}{8} )$-invariant vector $x\in H$ of norm $1$. We write as usual $\pi $ as a direct integral $\pi =\int_{Z}^{\oplus}\pi _{z}\,d\mu (z)$, where $\mu $ is a finite Borel measure on a standard Borel space $Z$, and $x$ as $(x_{z})_{z\in Z}$, with 
$\int_{Z}||x_{z}||^{2} d\mu (z)=1$. We have 
\begin{equation}\label{Eq17}
\sup_{(t,\qqq,\ppp)\in\qq}\,\Bigl|1-\int_{Z}\pss{\pi _{z}(t,\qqq,\ppp)x_{z}}{x_{z}}d\mu (z)\Bigr|<\dfrac{\varepsilon}{8}\cdot 
\end{equation}
For every $z\in Z$, the irreducible representation $\pi_{z}$ belongs to one of the two families $(\mathcal{F}_{1})$ and $(\mathcal{F}_{2})$. If $\pi _{z}$ belongs to $(\mathcal{F}_{1})$, we write it as $\pi _{\pm\lambda_{z} }$ for some $\pm\lambda _{z}\in\R$, and if $\pi $ belongs to $(\mathcal{F}_{2})$, as $\pi _{\yyy_{z},\eeeta _{z}}$ for some $(\yyy_{z},\eeeta _{z})\in\R^{2n}$. Let, for $i=1,2$, $Z_{i}$ be the subset of $Z$ consisting of the elements $z\in Z$ such that $\pi _{z}$ belongs to $(\mathcal{F}_{i})$. We have $x_{z}\in L^{2}(\R^{n})$ for every $z\in Z_{1}$, and $x_{z}\in\C$ for every $x\in Z_{2}$.
We now observe the following:
\begin{lemma}\label{Lemma G}
 A \ka\ subset of $\R^{2n}$ contains elements $(\qqq,\ppp)$ such that the Euclidean norm $|\ppp|$ of $\ppp$ is arbitrarily large.
\end{lemma}
\begin{proof}
 Let $\qq_{1}$ be a \ka\ subset of $\R^{2n}$, with \ka\ constant $\varepsilon >0$, and suppose that there exists a constant $M>0$ such that $|\ppp|\le M$ for every $(\qqq,\ppp)\in\qq_{1}$. Let $\delta >0$ be such that $2M\delta <\varepsilon $ and consider the probability measure on $\R^{2n}$ defined by 
\[
\sigma =\delta _{\0}\times\mathbbm{1}_{B(\0,\delta )}\,\dfrac{d\ppp}{|B(\0,\delta)| }\cdot 
\]
For every $(\qqq,\ppp)\in\qq_{1}$,
\[
|\widehat{\sigma }(\qqq,\ppp)-1|=\Bigl|\int_{B(\0,\delta )}e^{i\sss\cdot\ppp}\,\dfrac{d\sss}{|B(\0,\delta )| }-1\Bigr|\le 2\delta |\ppp|\le 2M\delta <\varepsilon. 
\]
But $\sigma (\{(\0,\0)\})=0$, and it follows  that $\qq_{1}$ is not a \ka\ set in $\R^{2n}$, which is a contradiction.
\end{proof}

By Fact \ref{Fact E}, we have for every $x\in Z_{1}$
\[
\xymatrix{
\pss{\pi _{\pm\lambda_{z} }(t,\qqq,\ppp)x_{z}}{x_{z}}\ar[r]&0
}\quad\textrm{as}\ \xymatrix@C=10pt{|\ppp|\ar[r]&+\infty },\ (t,\qqq,\ppp)\in H_{n}.
\]
Since $|\pss{\pi _{\pm\lambda }(t,\qqq,\ppp)x_{z}}{x_{z}}|\le||x_{z}||^{2}$ for every $z\in Z_{1}$, and 
$\int_{Z}||x_{z}||^{2}d\mu (z)=1$, the dominated convergence theorem implies that
\[
\xymatrix{\ds\int_{Z_{1}}
\pss{\pi _{\pm\lambda_{z} }(t,\qqq,\ppp)x_{z}}{x_{z}}d\mu (z)\ar[r]&0
}\quad\textrm{as}\ \xymatrix@C=10pt{|\ppp|\ar[r]&+\infty },\ (t,\qqq,\ppp)\in H_{n}.
\]
By Lemma \ref{Lemma G}, there exists an element $(t_{0},\qqq_{0},\ppp_{0})$ of $\qq$ with $|\ppp_{0}|$ so large that 
\[
\Bigl|\int_{Z_{1}}
\pss{\pi _{\pm\lambda_{z} }(t_{0},\qqq_{0},\ppp_{0})x_{z}}{x_{z}}d\mu (z)\Bigr|<\dfrac{\varepsilon }{8}\cdot 
\]
Property (\ref{Eq17}) implies then that
\[
\Bigl|1-\int_{Z_{2}}\pi _{\yyy_{z},\eeeta _{z}}(t_{0},\qqq_{0},\ppp_{0})|x_{z}|^{2}d\mu (z)\Bigr|<\dfrac{\varepsilon }{4}
\] 
from which it follows that $\ds\int_{Z_{2}}|x_{z}|^{2}d\mu (z)>1-\dfrac{\varepsilon }{4}$, so that
$\ds\int_{Z_{1}}||x_{z}||^{2}d\mu (z)<\dfrac{\varepsilon }{4}$.  Plugging this into (\ref{Eq17}) yields that
\[
\sup_{(t,\qqq,\ppp)\in\qq}\,\Bigl|1-\int_{Z_{2}}\pi _{\yyy_{z},\eeeta _{z}}(t,\qqq,\ppp)|x_{z}|^{2}d\mu (z)\Bigr|<\dfrac{3\varepsilon }{8}\cdot 
\]
Since $\int_{Z_{2}}|x_{z}|^{2}d\mu (z)>1-\varepsilon /4$ and $0<\varepsilon <3$, we can, by normalizing the family $(x_{z})_{z\in Z_{2}}$, suppose without loss of generality that $Z=Z_{2}$,  $\int_{Z}|x_{z}|^{2}d\mu (z)=1$ and that
\begin{equation}\label{Eq18}
 \sup_{(t,\qqq,\ppp)\in\qq}\,\Bigl|1-\int_{Z}e^{i(\yyy_{z}\cdot\qqq+\eeeta _{z}\cdot\ppp)}\,|x_{z}|^{2\,}d\mu (z)\Bigr|<\varepsilon . 
\end{equation}
 Consider now the unitary representation $\rho $ of $\R^{2n}$ on $L^{2}(Z,\mu )$ defined by 
\[
\xymatrix{
\rho (\qqq,\ppp)\,f\,:\,z\ar@{|->}[r]&e^{i(\yyy_{z}\cdot\qqq+\eeeta _{z}\cdot\ppp)}f_z
}
\]
for every $(\qqq,\ppp)\in\R^{2n}$ and every $f=(f_z)_{z\in Z}\in L^{2}(Z,\mu )$. Then (\ref{Eq18}) can be 
rewritten as 
\begin{align*}
 \sup_{(t,\qqq,\ppp)\in\qq}\,\bigl|1-\pss{\rho (\qqq,\ppp)x}{x}\bigr|<\varepsilon,\quad\textrm{i.e.}\quad
\sup_{(\qqq,\ppp)\in\qq_{0}}\,\bigl|1-\pss{\rho (\qqq,\ppp)x}{x}\bigr|<\varepsilon.
\end{align*}
Since $\varepsilon $ is a \ka\ constant for $\qq_{0}$, the representation $\rho $ admits a non-zero $\R^{2n}$-invariant vector $f\in L^{2}(Z,\mu )$. Proceeding as in the proof of $(1)\Longrightarrow (2)$, we see that for every $(\qqq,\ppp)\in\R^{2n}$, $e^{i(\yyy_{z}\cdot\qqq+\eeeta _{z}\cdot\ppp)}f(z)=f(z)$ $\mu $-almost everywhere on $Z$, so that there exists a subset $Z_{0}$ of $Z$ with $\mu (Z_{0})>0$
such that $f$ does not vanish on $Z_{0}$ and,  for every $z\in Z_{0}$, $\yyy_{z}\cdot\qqq+\eeeta _{z}\cdot\ppp$ belongs to $2\pi \Z$ for every $(\qqq,\ppp)\in \Q^{2n}$. By continuity, $\yyy_{z}\cdot\qqq+\eeeta _{z}\cdot\ppp$ belongs to $2\pi \Z$ for every $z\in Z_{0}$ and every $(\qqq,\ppp)\in\R^{2n}$, so that $(\yyy_{z},\eeeta _{z})=(\zzzero,\zzzero)$ for every $z\in Z_{0}$. So if we set $Z_{0}'=\{z\in Z\,;\,(\yyy_{z},\eeeta _{z})=(\zzzero,\zzzero)\}$, we have $\mu (Z_{0}')>0$. The function 
$f=\mathbbm{1}_{Z'_{0}}$ is hence a non-zero element of $L^{2}(Z,\mu )$, which is clearly an $H_{n}$-invariant vector for the representation $\pi $. So $(\qq,\frac{\varepsilon }{8})$ is a \ka\ pair in $H_{n}$, and Theorem \ref{Proposition F} is proved.
\end{proof}

\subsection{\ka\ sets in the  group $\textrm{Aff}_+(\R)$}
The underlying space of the  group $\textrm{Aff}_+(\R)$ of orientation-preserving affine homeomorphisms of $\R$ is $(0,+\infty )\times \R$, and the group law is given by 
$(a,b)(a',b')=(aa',b+ab')$, where $(a,b)$ and $(a',b')$ belong to $ (0,+\infty )\times \R$. 
As in the case of the Heisenberg groups, the irreducible unitary representations of $\textrm{Aff}_+(\R)$ are completely classified (see \cite[Sec.~6.7]{Fo}) and fall within two classes:
\par\smallskip 
-- the class $(\mathcal{F}_{1})$ consists of two infinite dimensional representations $\pi _{+}$ and $\pi _{-}$ of $\textrm{Aff}_+(\R)$, which act respectively on the Hilbert spaces $L^{2}((0,+\infty ),ds)$ and $L^{2}((-\infty ,0),ds)$. They are both defined by the formula
\[
\xymatrix{
\pi _{\pm}(a,b)f\,:\,s\ar@{|->}[r]&\sqrt{a}\,e^{2i\pi bs}f(as)
}
\] where $(a,b)\in (0,+\infty )\times \R$, $f\in L^{2}((0,+\infty ),ds)$ in the case of $\pi _{+}$, and  $f\in L^{2}((-\infty ,0),ds)$ in the case of $\pi _{-}$. It is a direct consequence of the Riemann-Lebesgue lemma that the analogue of Fact \ref{Fact E} holds true for the two representations $\pi _{+}$ and $\pi _{-}$ of $G$:
\begin{fact}\label{Fact M}
 For every $f_1,f_2\in L^{2}((0,+\infty ),ds)$ and every $g_1,g_2\in L^{2}((-\infty ,0),ds)$, we have
\[
\xymatrix{\pss{\pi _{+}(a,b)f_1}{f_2}\ar[r]&0}\quad \textrm{and}\quad \xymatrix{\pss{\pi _{-}(a,b)g_1}{g_2}\ar[r]&0}\quad\textrm{as}\quad
\xymatrix{|b|\ar[r]&+\infty .}
\]
\end{fact}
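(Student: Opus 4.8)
The plan is to recognise the matrix coefficient $\pss{\pi_+(a,b)f_1}{f_2}$ as a Fourier transform in the variable $b$ and to invoke the Riemann--Lebesgue lemma, exactly as announced just before the statement. First I would write out, for $f_1,f_2\in L^{2}((0,+\infty),ds)$ and $(a,b)\in(0,+\infty)\times\R$,
\[
\pss{\pi_+(a,b)f_1}{f_2}=\int_{0}^{+\infty}\sqrt{a}\,e^{2i\pi bs}\,f_1(as)\,\overline{f_2(s)}\,ds,
\]
which is, up to the sign of $b$, the value at $b$ of the Fourier transform of the function $h_a(s)=\sqrt{a}\,f_1(as)\overline{f_2(s)}$, extended by $0$ to all of $\R$. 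The Cauchy--Schwarz inequality gives $\|h_a\|_{L^{1}(\R)}\le\|f_1(a\,\cdot\,)\|_2\,\|f_2\|_2=\|f_1\|_2\|f_2\|_2$, so $h_a\in L^{1}(\R)$ and the Riemann--Lebesgue lemma yields $\pss{\pi_+(a,b)f_1}{f_2}\to 0$ as $|b|\to+\infty$, for each fixed $a>0$. The argument for $\pi_-$ is identical, with $L^{2}((-\infty,0),ds)$ replacing $L^{2}((0,+\infty),ds)$.

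The point that needs a little care --- and which is, for Fact \ref{Fact M}, the analogue of the fact that the convergence in Fact \ref{Fact E} is uniform with respect to the parameters $t$ and $\qqq$ --- is that the convergence $\pss{\pi_\pm(a,b)f_1}{f_2}\to 0$ should hold uniformly with respect to $a>0$, this being the form in which it is used once a representation of $\textrm{Aff}_+(\R)$ is decomposed as a direct integral of irreducible ones. I would establish this uniformity first when $f_1$ and $f_2$ are continuous with support contained in a fixed interval $[c,C]$ with $0<c<C$: then $h_a$ vanishes identically unless $c/C\le a\le C/c$, and $a\mapsto h_a$ is continuous from this compact interval into $L^{1}(\R)$ (using the strong continuity of $a\mapsto\sqrt{a}\,f_1(a\,\cdot\,)$ on $L^{2}$, together with Cauchy--Schwarz). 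Hence $\{h_a\,;\,a>0\}$ is a compact subset of $L^{1}(\R)$, so its image under the Fourier transform is a compact subset of $C_0(\R)$; as compact subsets of $C_0(\R)$ vanish uniformly at infinity, $\sup_{a>0}|\pss{\pi_+(a,b)f_1}{f_2}|\to 0$ as $|b|\to+\infty$. For arbitrary $f_1,f_2\in L^{2}((0,+\infty),ds)$ I would approximate them in $L^{2}$-norm by continuous functions with compact support in $(0,+\infty)$ and transfer the estimate through the unitarity of each $\pi_+(a,b)$: if $\|f_i-g_i\|_2<\delta$ with $g_i$ of the above type, then $|\pss{\pi_+(a,b)f_1}{f_2}-\pss{\pi_+(a,b)g_1}{g_2}|\le\delta(\|f_1\|_2+\|f_2\|_2+\delta)$ for all $(a,b)$, so letting $\delta\to 0$ concludes.

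The only genuine obstacle is this uniformity in $a$: the naive Riemann--Lebesgue argument gives it only for each fixed $a$, and one must exploit that the family $\{h_a\}$ is relatively compact in $L^{1}(\R)$, which in turn rests on the elementary observation that $h_a$ is forced to vanish as soon as $a$ leaves a fixed compact subinterval of $(0,+\infty)$, provided $f_1$ and $f_2$ are compactly supported in $(0,+\infty)$. Everything else is routine.
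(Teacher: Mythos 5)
Your proof is correct and follows the same route as the paper, which justifies Fact \ref{Fact M} only by the one-line remark that it is a direct consequence of the Riemann--Lebesgue lemma: the core of your argument --- writing $\pss{\pi_+(a,b)f_1}{f_2}$ as the Fourier transform at $-b$ of $h_a=\sqrt{a}\,f_1(a\,\cdot\,)\overline{f_2}$, which lies in $L^1(\R)$ by Cauchy--Schwarz --- is exactly what that remark refers to. Your supplementary argument for uniformity in $a$ (vanishing of $h_a$ for $a$ outside a compact subinterval of $(0,+\infty)$ when the $f_i$ are compactly supported, compactness of $\{h_a\,;\,a>0\}$ in $L^1(\R)$, then an $L^2$-density transfer) is also sound, and usefully makes explicit a point that the paper leaves implicit when the Fact is later applied under a direct integral.
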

\par\smallskip 
-- the representations of $\textrm{Aff}_+(\R)$ belonging to the family $(\mathcal{F}_{2})$ are one-dimensional. They are parametrized by $\R$, and $\pi _{\lambda }$ is defined for every $\lambda \in\R$ by the formula
\[
\pi _{\lambda }(a,b)=a^{i\lambda }\quad\textrm{for every}\ (a,b)\in (0,+\infty )\times \R.
\]
Proceeding as in the proof of Theorem \ref{Proposition F}, we characterize the \ka\ subsets of the  group $\textrm{Aff}_+(\R)$ in the following way:
\begin{theorem}\label{Proposition N}
 Let $\qq$ be a subset of $\textrm{Aff}_+(\R)$. The following assertions are equivalent:
\begin{enumerate}
 \item [{(1)}] $\qq$ is a \ka\ set in $\textrm{Aff}_+(\R)$;
\item[{(2)}] the set $\qq_{0}=\{t\in\R\,;\,\exists\,b\in\R\ (e^{t},b)\in\qq\}$ is a \ka\ set in $\R$.
\end{enumerate}
\end{theorem}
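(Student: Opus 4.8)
The plan is to mirror, almost line for line, the proof of Theorem \ref{Proposition F} for the Heisenberg groups, with the two infinite dimensional representations $\pi_+,\pi_-$ playing the role of the family $(\mathcal{F}_1)$ and the characters $\pi_\lambda$, $\lambda\in\R$, the role of the family $(\mathcal{F}_2)$. The structural fact that makes this work is that $\pi_\lambda(a,b)=a^{i\lambda}$ depends only on the first coordinate: if $p\colon\textrm{Aff}_+(\R)\to(0,+\infty)$ denotes the canonical projection $p(a,b)=a$ — a continuous surjective group homomorphism, and $(0,+\infty)\cong\R$ via the logarithm — then $\pi_\lambda$ factors as $\pi_\lambda=\chi_\lambda\circ p$ for the corresponding character $\chi_\lambda$ of $(0,+\infty)\cong\R$, and the set $\qq_0$ of assertion (2) is exactly $p(\qq)$ read inside $\R$.

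For the implication $(1)\Rightarrow(2)$ I would argue directly, without any of the machinery above. Let $\varepsilon_0$ be a \ka\ constant for $\qq$ in $\textrm{Aff}_+(\R)$, and let $\rho$ be a unitary representation of $\R$ admitting a $(\qq_0,\varepsilon_0)$-invariant vector $v$. Then $\rho\circ p$ is a unitary representation of $\textrm{Aff}_+(\R)$, and since $p(\qq)=\qq_0$ the vector $v$ is $(\qq,\varepsilon_0)$-invariant for $\rho\circ p$; hence $\rho\circ p$ has a non-zero $\textrm{Aff}_+(\R)$-invariant vector $w$, and as $p$ is onto, $\rho(t)w=w$ for every $t\in\R$. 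Thus $\qq_0$ is a \ka\ set in $\R$, with the same constant $\varepsilon_0$. (Alternatively one could transcribe word for word the measure-theoretic argument used for $(1)\Rightarrow(2)$ in Theorem \ref{Proposition F}.)

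For the converse $(2)\Rightarrow(1)$, fix a \ka\ constant $\varepsilon\in(0,1)$ for $\qq_0$ in $\R$, and let $\pi=\int_Z^\oplus\pi_z\,d\mu(z)$ be a unitary representation of $\textrm{Aff}_+(\R)$ on a separable Hilbert space $H$ carrying a $(\qq,\varepsilon')$-invariant unit vector $x=(x_z)_z$, where $\varepsilon'$ is a small multiple of $\varepsilon^2$ to be fixed at the end. Split $Z=Z_1\cup Z_2$, with $\pi_z\in\{\pi_+,\pi_-\}$ (so $x_z\in L^2$) for $z\in Z_1$ and $\pi_z=\pi_{\lambda_z}$ for $z\in Z_2$. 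As in the Heisenberg case, the crux is to make the contribution of $Z_1$ negligible, and this rests on two ingredients. The first is the analogue of Lemma \ref{Lemma G}: a \ka\ subset of $\R$ is unbounded — indeed, if $\qq_0\subseteq[-M,M]$, then for $|s|$ small the Dirac mass $\delta_s$ satisfies $\sup_{t\in\qq_0}|\widehat{\delta_s}(t)-1|<\varepsilon$ yet $\delta_s(\{0\})=0$, contradicting the spectral characterization of \ka\ sets in second countable LCA groups recalled before Theorem \ref{nimporte}. Consequently $p(\qq)\subseteq(0,+\infty)$ is not relatively compact, so $\qq$ contains a sequence $(a_k,b_k)$ leaving every compact subset of $\textrm{Aff}_+(\R)$ (already the first coordinate escapes to $\{0,+\infty\}$). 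The second ingredient is that the matrix coefficients of $\pi_+$ and $\pi_-$ vanish along such a sequence: Fact \ref{Fact M} gives this as $|b|\to+\infty$, and a similar application of the dominated convergence theorem (approximating $f_1,f_2$ by functions compactly supported in $(0,+\infty)$) gives it as $a\to0^+$ and as $a\to+\infty$. Combining the two, dominated convergence produces an element $(a_0,b_0)\in\qq$ for which $\bigl|\int_{Z_1}\pss{\pi_z(a_0,b_0)x_z}{x_z}\,d\mu(z)\bigr|$ is as small as we wish; together with $\mathrm{Re}\,\pss{\pi(a_0,b_0)x}{x}$ close to $1$ and $|\pss{\pi_z(a_0,b_0)x_z}{x_z}|=|x_z|^2$ on $Z_2$, this forces $\int_{Z_1}\|x_z\|^2\,d\mu(z)$ to be small. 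After discarding $Z_1$ and renormalizing we may assume $Z=Z_2$, $\int_Z|x_z|^2\,d\mu(z)=1$, and $\sup_{(a,b)\in\qq}\bigl|1-\int_Z a^{i\lambda_z}|x_z|^2\,d\mu(z)\bigr|<\varepsilon^2/2$. The representation $\rho$ of $(0,+\infty)\cong\R$ on $L^2(Z,\mu)$ given by $\rho(a)f\colon z\mapsto a^{i\lambda_z}f_z$ then has $(x_z)_z$ as a $(\qq_0,\varepsilon)$-invariant unit vector, so it has a non-zero invariant vector $f$; choosing a representative, on the positive-measure set $Z_0=\{z:f_z\neq0\}$ one gets $a^{i\lambda_z}=1$ for all $a>0$, hence $\lambda_z=0$ and $\pi_z$ trivial. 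Thus $\mu(\{z:\pi_z\text{ trivial}\})>0$, i.e.\ $\pi$ contains the trivial representation and $\mathbbm{1}_{\{z:\pi_z\text{ trivial}\}}$ is a non-zero $\textrm{Aff}_+(\R)$-invariant vector; tracking constants shows that $(\qq,\varepsilon')$ is a \ka\ pair for a suitable $\varepsilon'$.

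I expect the main obstacle to be the second ingredient above: upgrading Fact \ref{Fact M} so that the matrix coefficients of $\pi_\pm$ decay not just in the translation variable $b$, but along every sequence going to infinity in the group — in particular as $a\to0^+$ and $a\to+\infty$. This dilation direction, rather than the translation direction, is the one forced to occur inside $\qq$ by the unboundedness of $\qq_0$, so it is genuinely needed; everything else is a routine transcription of the Heisenberg argument, the direct-integral bookkeeping over $Z_1$ and $Z_2$ being carried out exactly as there.
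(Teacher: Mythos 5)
Your proof is correct and follows essentially the same route as the paper's, which simply transposes the Heisenberg argument of Theorem \ref{Proposition F}; your treatment of $(1)\Rightarrow(2)$ by pulling back representations of $\R$ along the surjective continuous homomorphism $(a,b)\mapsto\ln a$ is just an abstract rephrasing of the paper's explicit construction on $L^{2}(\R,\sigma)$. The one point where you depart from a literal transcription is exactly the right one: since the unboundedness supplied by $\qq_{0}$ being a \ka\ set in $\R$ lives in the dilation coordinate $a$ rather than in $b$, Fact \ref{Fact M} as literally stated (decay as $|b|\to+\infty$) is not the estimate actually needed, and your upgraded version --- $\pss{\pi_{\pm}(a,b)f_1}{f_2}\to 0$ as $a\to 0^{+}$ or $a\to+\infty$ uniformly in $b$, proved by reducing to $f_1,f_2$ compactly supported in $(0,+\infty)$, for which the coefficient vanishes identically once $a$ leaves a compact subset of $(0,+\infty)$, and then using density --- is correct and is what the paper implicitly relies on when it invokes the analogue of Lemma \ref{Lemma G} for the multiplicative group $((0,+\infty),\times)$.
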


\begin{proof}
 The proof is similar to that of Theorem \ref{Proposition F}, and we will not give it in full detail here. Let us first sketch briefly a proof of the implication $(1)\Longrightarrow(2)$. Suppose that $\qq$ is a \ka\ set in $\textrm{Aff}_+(\R)$, and let $\varepsilon >0$ be a \ka\ constant for $\qq$. Consider a probability measure $\sigma $ on $\R$ such that 
$
 \sup_{t\in\qq_{0}}\,\bigl|\widehat{\sigma }(t)-1\bigr|<{\varepsilon ^{2}}/{2}. 
$
We associate to $\sigma $ a representation $\rho $ of $\textrm{Aff}_+(\R)$ on $L^{2}(\R,\sigma )$ by setting, for every $(a,b)\in (0,+\infty )\times \R$ and every $f\in L^{2}(\R,\sigma )$,
$\xymatrix{
\rho (a,b)f\,:\,s\ar@{|->}[r]&e^{i s (\ln a)}f(s).
}$ Since
\[
||\rho (a,b)\mathbbm{1}-\mathbbm{1}||^{2}\le2\Bigl|\int_{\R}\bigl(e^{is(\ln a)}-1)d\sigma (s)\Bigr|\quad \textrm{for every}\ (a,b)\in (0,+\infty )\times \R,
\]
we have $\sup_{\{(a,b)\,;\,\ln a\in\qq_{0}\}}\,||\rho (a,b)\mathbbm{1}-\mathbbm{1}||<\varepsilon $, i.\,e.\ $\sup_{(a,b)\in\qq}\,||\rho (a,b)\mathbbm{1}-\mathbbm{1}||<\varepsilon $. Hence $\rho $ admits a non-zero $\textrm{Aff}_+(\R)$-invariant function $f\in L^{2}(\R,\sigma )$, and the same argument as in the proof of Theorem \ref{Proposition F} shows then that $\sigma (\{0\})>0$. The converse implication 
$(2)\Longrightarrow(1)$ is proved in exactly the same way as in Theorem \ref{Proposition F}, using the same modifications as those outlined above.
The group $\R^{2n}$ has to be replaced by the multiplicative group $((0,+\infty ),\times)$ and the analogue of Lemma \ref{Lemma G} is that \ka\ subsets of this group contain elements of arbitrarily large absolute value. If $\qq_{0}$ is a \ka\ set in $\R$, with \ka\ constant $\varepsilon $ small enough, the same argument as in the proof of Theorem \ref{Proposition F} (involving the same notation) shows that it suffices to prove the following statement: let $\mu $ be a finite Borel measure on a Borel space $Z$, $x=(x_{z})_{z\in Z}$ a scalar-valued function of $L^{2}(Z,\mu )$ with $\int_{Z}|x_{z}|^{2}d\mu (z)=1$, and $\pi $ a representation of $G$ of the form $\pi =\int_{Z}^{\oplus}\pi _{\lambda _{z}}d\mu (z)$ with
\[
\sup_{(a,b)\in\qq}\bigl|1-\pss{\pi(a,b)x}{x}\bigr|=\sup_{\{a\,;\,\ln  a\,\in\,\qq_{0}\}}\Bigl|1-\int_{Z}e^{i(\ln a)\lambda _{z}}|x_{z}|^{2}d\mu (z)\Bigr|<\varepsilon.
\]
Then the set $Z_{0}=\{z\in Z\,;\,\lambda _{z}=0\}$ satisfies $\mu (Z_{0})>0$. The proof of this statement uses the same argument as the one employed in the proof of Theorem \ref{Proposition F}. It involves the representation $\rho $ of the group $((0,+\infty ),\times)$ on $L^{2}(Z,\mu )$ defined by 
$\smash{\xymatrix{\rho (a)f:z\ar@{|->}[r]&e^{i(\ln a)\lambda _{z}}f_{z}}}$ for every $a>0$ and every $(f_{z})_{z\in Z}\in L^{2}(Z,\mu )$, and uses the obvious fact that since $\qq_{0}$ is a \ka\ set in $\R$, $\{a\,;\,\ln a\,\in\,\qq_{0}\}$ is a \ka\ set in $((0,+\infty ),\times)$.
\end{proof}

\begin{remark}\label{Howe-Moore}
Facts \ref{Fact E} and \ref{Fact M} have played a crucial role in the proofs of Theorems \ref{Proposition F} and \ref{Proposition N} respectively, as they allowed us to discard all irreducible representations except the one-dimensional ones in  inequalities of the form (\ref{Eq17}).
In groups with the Howe-Moore property (see for instance \cite{HM}, \cite{Z} or  \cite{BM} for the definition and for more about this property), all non-trivial irreducible representations have the vanishing property of the matrix coefficients stated in Facts \ref{Fact E} or \ref{Fact M}. 
It easily follows from this observation that all subsets with non-compact closure are \ka\ sets in groups with the Howe-Moore property, and that if the group is additionally supposed not to have Property (T), the \ka\ sets are exactly the sets with non-compact closure.
As $SL_{2}(\R)$ is a non-compact connected simple real Lie group with finite center, it has the Howe-Moore property. But it does not have Property (T), and so we have:
\begin{example}\label{Example J}
 The \ka\ sets in $SL_{2}(\R)$ are exactly the subsets of $SL_{2}(\R)$ with non-compact closure.
 \end{example}
 
These observations testify of the rigidity of the structure of groups with the Howe-Moore property, and stand in sharp contrast with all the examples we have presented in the rest of this section.
\end{remark}

 \appendix
\section{Infinite tensor products of Hilbert spaces}
We  briefly describe in this appendix some constructions of tensor 
products of infinite families of Hilbert spaces, and of tensor products of infinite families of unitary representations. These last objects play an important role in the proof of Theorem \ref{Theorem 0}. We review here the properties and results which we need, following the original works of von Neumann \cite{VN} and Guichardet \cite{Gui}.
\subsection{The complete and incomplete tensor products of Hilbert 
spaces}
The original construction of the complete and incomplete tensor products 
of a family $(H_{\alpha })_{\alpha \in I}$ of Hilbert spaces is due to von 
Neumann \cite{VN}. It was later on taken up by Guichardet in \cite{Gui} 
under a somewhat different point of view, and the incomplete tensor 
products of von Neumann are rather known today as the Guichardet tensor 
products of Hilbert spaces. Although these constructions can be carried 
out starting from an arbitrary  family $(H_{\alpha })_{\alpha \in I}$ of 
Hilbert spaces, we will present them here only in the case of a countable 
family $(\hn)_{\gn}$ of (complex) Hilbert spaces.
\par\smallskip 
The \emph{complete infinite tensor product} $\inc{}$ 
of the Hilbert spaces $\hn $ is defined in \cite[Part~II,~Ch.~3]{VN} in 
the following way: the elementary infinite tensor products are the elements
$\xxx=\oxn$, where $\xn$ belongs to $\hn $ for each 
$\gn$  and the infinite product $\prod_{\gn}||\xn||$ is convergent 
in the sense of \cite[Def.~2.2.1]{VN}, which by \cite[Lem.~2.4.1]{VN} is equivalent to the fact that either $x_{n}=0$ for some $n\ge 1$ or the series $\sum_{n\ge 1}\max(||x_{n}||-1,0)$ is convergent. Sequences 
$\pxn$ with this property are called by von Neumann in 
\cite{VN} \emph{$C$-sequences}. A scalar product is then defined on the 
set of finite linear combinations of elementary tensor products by setting 
\[
\pss{\xxx}{\yyy}=\prod_{\gn}\pss{\xn}{\yn}
\]
for any elementary tensor products $\xxx=\oxn$ and 
$\yyy=\oyn$, and extending the definition by linearity to 
finite linear combinations of such elements.
The product defining $\pss{\xxx}{\yyy}$ for two elementary vectors $\xxx$ and $\yyy$ is quasi-convergent in the sense of \cite[Def.~2.5.1]{VN}, i.e. $\prod_{\gn}|\pss{\xn}{\yn}|$ is convergent. The value of this quasi-convergent product is $\prod_{\gn}\pss{\xn}{\yn}$ if the product is convergent in the usual sense, and $0$ if it is not.
\par\smallskip 
That this is indeed a scalar product which turns the set of finite linear 
combinations of elementary tensor products into a complex prehilbertian 
space is proved in \cite[Lem.~3.21 and Theorem II]{VN}. 
For any elementary tensor product $\xxx=\oxn$, 
$||\xxx||=\prod_{\gn}||\xn||$. The space $\inc{}$ is 
the completion of this space for the topology induced  by the scalar 
product. It is always non-separable.
\par\smallskip 
The \emph{incomplete tensor products} are closed subspaces of the complete 
tensor product. They are defined by von Neumann using an equivalence 
relation between sequences $\pxn$ of vectors with $\xn\in
\hn $ for each $\gn$ and such that the series 
$\sum_{\gn}\bigl|1-||\xn||\bigr|$ is convergent. Such sequences are called 
\emph{$C_{0}$-sequences}. They are $C$-sequences, and if 
$\pxn$ is a $C$-sequence such that $\prod_{\gn}||\xn||>0$ (i.\,e.\ 
$\pxn$ is non-zero in $\inc{}$) then 
$\pxn$ is a $C_{0}$-sequence. If $\pxn$ is a $C_{0}$-sequence, $\pxn$ is 
bounded, and the series $\sum_{\gn}\bigl|1-||\xn||^{2}\bigr|$ is 
convergent.
\par\smallskip 
Two $C_{0}$-sequences $\pxn$ and $\pyn$ are \emph{equivalent} if the 
series $\sum_{\gn}\bigl|1-\pss{\xn}{\yn}\bigr|$ is convergent. If 
$\AAA$ denotes an equivalence class of $C_{0}$-sequences for this 
equivalence relation, the \emph{incomplete tensor product} 
$\inc{\AAA}$ \emph{associated to} $\AAA$ is the 
closed linear span in $\inc{}$ of the vectors $\xxx=
\oxn$, where $\pxn$ belongs to $\AAA$ \cite[Def.~4.1.1]{VN}. 
If $\AAA$ and $\AAA'$ are two different equivalence classes, the spaces 
$\smash[t]{\inc{\AAA}}$ and $\smash[t]{\inc{\AAA'}}$ are orthogonal, and 
the linear span of the 
incomplete tensor products $\inc{\AAA}$, where $\AAA$ runs over all 
equivalence classes of $C_{0}$-sequences, is dense in the complete tensor 
product $\inc{}$.
\par\smallskip 
If $\AAA$ is an equivalence class of $C_{0}$-sequences, $\inc{\AAA}$ 
admits another, more transparent description, which runs as follows 
\cite[Lem.~4.1.2]{VN}, see also \cite[Rem.~1.1]{Gui}: let $\pan$ be a 
sequence with $\an\in \hn $ and $||\an||=1$ for every $\gn$, such that the equivalence class of 
$\pan$ is $\AAA$ (such a sequence $\pan $ does exist: if $\pxn $ is any non-zero $C_{0}$-sequence belonging to $\AAA$, $x_{n}$ is non-zero for every $n\ge 1$, and we can define a $C_{0}$-sequence $\pan $ by setting $a_{n}=x_{n}/||x_{n}||$ for every $n\ge 1$. It is not difficult to check that $\pan $ is equivalent to $\pxn $, and so belongs to $\AAA$). Then $\inc{\AAA}$ coincides with the closed linear span 
in $\inc{}$ of vectors $\xxx=\oxn$, where $\xn=\an$ for all 
but finitely many  integers $\gn$. Denoting the vector $\oan$ by $\aaaa$, 
we write this closed linear span as $\inc{\aaaa}$ (see \cite{Gui}), and 
thus $\inc{\aaaa}=\inc{\AAA}$, where $\AAA$ is the equivalence class of 
$\aaaa$. The space $\inc{\aaaa}$ is usually called \emph{the Guichardet 
tensor product} of the spaces $\hn $ \emph{associated to the sequence} 
$\pan$. Proposition 1.1 of \cite{Gui} states the following, which is a direct consequence of
the discussion above: if $\xxx=\pxn$ is a $C_{0}$-sequence 
which is equivalent to $\aaaa$, $\xxx$ belongs to $\inc{\aaaa}$. Vectors 
$\xxx$ of this form are also said to be \emph{decomposable with respect 
to} $\aaaa$, while vectors $\xxx=\pxn$ with $\xn=\an$ for all but finitely 
many indices $n$ are called \emph{elementary vectors} of $\inc{\aaaa}$.
\par\smallskip
Suppose that all the spaces $\hn$, $\gn$, are separable.
For each $\gn$, let $(e_{p,n})_{1\le p\le p_{n}}$ be a Hilbertian basis of 
$\hn $, with $1\le p_{n}\le+\infty $ and $e_{1,n}=a_{n}$. The family of all
elementary vectors $\eee_{\beta }=\oti_{n\ge 1}e_{\beta (n),n}$ of $\inc{\aaaa}$, where 
$\beta $ is a map from $\N$ into itself such that $1\le\beta (n)\le p_{n}$ 
for every $\gn$ and $\beta (n)=1$ for all but finitely many integers $n\ge 
1$, forms a Hilbertian basis of $\inc{\aaaa}$ \cite[Lem.~4.1.4]{VN}. In 
particular, $\inc{\aaaa}$ is a separable complex Hilbert space.

\subsection{Tensor products of unitary representations} 
Let $G$ be a topological group, and let $(\hn)_{\gn}$ be a sequence of 
complex separable Hilbert spaces. Let $\pan$ be a sequence of 
vectors with $\an\in \hn $ and $||\an||=1$ for every $\gn$. We are 
looking 
for conditions under which one can define a unitary representation 
$\pmb{\pi} $ 
of $G$ on $\inc{\aaaa}$ which satisfies 
\begin{equation}\label{Eq1}
 \pmb{\pi} (g)\oxn=\oti_{\gn}\pi_{n} (g)\xn
\end{equation}
for every $g\in G$ and every decomposable vector $\xxx=\oxn$ with respect 
to $\aaaa$. 
Observe that without any assumption, the equality $\pmb{\pi} 
(g)\oxn=\oti_{\gn}\pi_{n} (g)\xn$ does not make any sense, since 
$(\pi_{n} (g)\xn)_{\gn}$, which is a $C_{0}$-sequence, may not be 
equivalent 
to $\aaaa$, and thus may not belong to $\inc{\aaaa}$.
\par\smallskip 
Infinite tensor products of unitary representations have already been 
studied in various contexts (see for instance \cite{BC} and the 
references therein). In   
\cite[Prop.~2.3]{BC}, the following observation is made: suppose that, 
for each $\gn$, $U_{n}$ is a unitary 
operator on $\hn$. Then there exists a unitary operator 
$\pmb{U}=\oti_{\gn}U_{n}$ on $\inc{\aaaa}$ satisfying 
\[\pmb{U}\bigl(\oxn \bigr)=\oti_{\gn}U_{n}\xn\]
for every decomposable vector $\xxx=\oxn$ with respect to $\aaaa$ if and only if the series 
$\sum_{\gn}\bigl|1-\pss{U_{n}\an}{\an} \bigr|$ is convergent (which is 
equivalent to requiring that the $C_{0}$-sequence $(U_{n}\an)_{\gn}$ be 
equivalent to $\pan$, i.\,e.\ to the fact that $\oti_{\gn}U_{n}a_{n}$  be a 
decomposable vector with respect to $\aaaa$). It follows from  this 
result that the formula (\ref{Eq1}) makes sense in $\inc{\aaaa}$ if and 
only if the series 
\begin{equation}\label{Eq2}
 \sum_{\gn}\,\bigl|1-\pss{\pi _{n}(g)a_{n}}{a_{n}}\bigr|
\end{equation}
is convergent for every $g\in G$.
Under this condition $\pmb{\pi} (g)=\oti_{\gn}\pi _{n}(g)$ is a unitary 
operator on $\inc{\aaaa}$ for every 
$g\in 
G$, and $\pmb{\pi} (gh)=\pmb{\pi} 
(g)\,\pmb{\pi} (h)$ for 
every $g,h\in G$.
\par\smallskip 
If the group $G$ is discrete, this tensor product representation is of 
course automatically strongly continuous. It is also the case if $G$ is 
supposed to be locally compact. 
\begin{proposition}\label{Proposition 3.2.0}
 Suppose that $G$ is a  locally compact group, and that the series 
$
 \sum_{\gn}|1-\pss{\pi 
 _{n}(g)\an }{\an}|
$ 
 is convergent for every $g\in G$. Then $\pmb{\pi} 
=\oti_{\gn}\pi _{n}$ is strongly continuous, and is hence a unitary 
representation of $G$ on $\inc{\aaaa}$.
\end{proposition}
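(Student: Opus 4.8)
The plan is to deduce strong continuity of $\pmb{\pi}=\oti_{\gn}\pi_{n}$ from the facts, already recorded just before the statement, that under the convergence assumption each $\pmb{\pi}(g)$ is a well-defined unitary operator on $\inc{\aaaa}$ and that $\pmb{\pi}$ is a group homomorphism into the unitary group of $\inc{\aaaa}$. Since $\inc{\aaaa}$ is a \emph{separable} Hilbert space (the spaces $\hn$ being separable) and $G$ is locally compact, I would invoke the classical theorem that every weakly measurable homomorphism of a locally compact group into the unitary group of a separable Hilbert space is automatically strongly continuous (see, e.g., \cite{Fo}); this is exactly where local compactness enters, and it is what lets us dispense with the ``continuity on a neighborhood of $e$'' hypothesis needed in Proposition \ref{Proposition 3.2}. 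It therefore suffices to check that $g\mapsto\pss{\pmb{\pi}(g)\xxx}{\yyy}$ is Borel measurable on $G$ for all $\xxx,\yyy\in\inc{\aaaa}$.

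To establish this weak measurability I would first treat elementary vectors. If $\xxx=\oxn$ and $\yyy=\oyn$ are elementary vectors of $\inc{\aaaa}$, then $\xn=\an$ and $\yn=\an$ for all $n$ larger than some $N$, and, writing $c_{n}(g)=\pss{\pi_{n}(g)\xn}{\yn}$, each $c_{n}$ is continuous on $G$ (because $\pi_{n}$ is strongly continuous) and $c_{n}(g)=\pss{\pi_{n}(g)\an}{\an}$ for $n>N$. The convergence hypothesis then gives $\sum_{\gn}|1-c_{n}(g)|<+\infty$ for every $g\in G$, so the infinite product $\prod_{\gn}c_{n}(g)$ converges absolutely, and by the definition of the scalar product on $\inc{\aaaa}$ its value is $\pss{\pmb{\pi}(g)\xxx}{\yyy}$. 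Hence the partial products $P_{m}(g)=\prod_{n=1}^{m}c_{n}(g)$, which are continuous in $g$, converge pointwise on $G$ to $g\mapsto\pss{\pmb{\pi}(g)\xxx}{\yyy}$, which is thus Borel measurable as a pointwise limit of continuous functions. By linearity this extends to all pairs of finite linear combinations of elementary vectors, and then to arbitrary $\xxx,\yyy\in\inc{\aaaa}$: approximating $\xxx,\yyy$ by such combinations $\xxx_{k},\yyy_{k}$ and using that each $\pmb{\pi}(g)$ is unitary, the functions $g\mapsto\pss{\pmb{\pi}(g)\xxx_{k}}{\yyy_{k}}$ converge to $g\mapsto\pss{\pmb{\pi}(g)\xxx}{\yyy}$ uniformly on $G$, so the latter is Borel measurable.

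The delicate point is the pointwise convergence of the partial products $P_{m}(g)$ to the matrix coefficient: in general an infinite product of scalars may be merely quasi-convergent in von Neumann's sense (the moduli converging but the arguments oscillating), in which case $P_{m}(g)$ need not converge to the conventional value of the product; here this cannot happen precisely because the hypothesis forces $\sum_{\gn}|1-c_{n}(g)|<+\infty$, hence absolute convergence, and I expect this to be the only real subtlety. One could also envisage a more self-contained route avoiding the cited measurability theorem, for instance a Baire category argument: the sets $\{g\in G\,:\,\sum_{n=1}^{m}|1-\pss{\pi_{n}(g)\an}{\an}|\le M\ \text{for every }m\ge 1\}$ are closed and cover $G$, so one of them has nonempty interior; but converting ``boundedness of the sum on some open set'' into ``smallness of the tails on a neighborhood of $e$'' seems awkward, which is why invoking the classical theorem is the cleaner path.
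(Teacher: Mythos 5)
Your proof is correct and follows essentially the same route as the paper: separability of $\inc{\aaaa}$ together with the automatic-continuity theorem for weakly measurable homomorphisms of a locally compact group into the unitary group (the paper cites \cite[Lem.~A.6.2]{BdHV}) reduces the claim to Borel measurability of matrix coefficients, which is verified on elementary vectors by exhibiting the coefficient as a pointwise limit of continuous partial products and then extended by density. Your observation that the hypothesis $\sum_{\gn}|1-\pss{\pi_{n}(g)\an}{\an}|<+\infty$ forces genuine (not merely quasi-convergent) convergence of the product, so that the partial products really do converge to the value of the inner product, is a point the paper's terse proof passes over in silence and is worth having made explicit.
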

\begin{proof}
 Since all the spaces $\hn $, $\gn$, are separable, $\inc{\aaaa}$ is 
separable too, and by \cite[Lem.~A.6.2]{BdHV} it suffices to show that 
$\smash{\xymatrix{g\ar@{|->}[r]&\pss{\pmb{\pi} (g)\,\pmb{\xi} }{\pmb{\xi} 
}}}$ 
is a 
measurable 
map from $G$ into $\C$ for every vector $\pmb{\xi} \in\inc{\aaaa}$.
Since the linear span of the elementary vectors is dense in
$\inc{\aaaa}$, standard arguments show that it 
suffices to prove this for elementary vectors $\xxx=\oti_{\gn}\xn$ of 
$\inc{\aaaa}$. 
Since 
each  map
$\smash{\xymatrix{g\ar@{|->}[r]&\pss{\pi_{n} (g)\xn }{\xn }}}$ is 
continuous 
on 
$G$, it is clear that $\smash{\xymatrix{\!\!g\ar@{|->}[r]&\pss{\pmb{\pi} 
(g)\xxx 
}{\xxx }}
=\prod_{\gn}\pss{\pi _{n}(g)\xn}{\xn}}$ is measurable on $G$.
\end{proof}
In the general case one 
needs 
to impose  an additional condition on the representations $\pi _{n}$ and on the vectors $a_{n}$ in order that $\pmb{\pi} $ be a strongly 
continuous representation of $G$ on $\inc{\aaaa}$.
\begin{proposition}\label{Proposition 3.2} 
 Suppose that the series $\sum_{\gn}\,\bigl|1-\pss{\pi 
_{n}(g)a_{n}}{a_{n}}\bigr|$ is convergent for every $g\in G$ and that the 
function 
$\xymatrix{g\ar@{|->}[r]&\sum_{\gn}\,\bigl|1-\pss{\pi 
_{n}(g)a_{n}}{a_{n}}\bigr|}$
is continuous on a neighborhood of the 
identity element $e$ of $G$. Then $\pmb{\pi} =\oti_{\gn}\pi _{n}$ is 
strongly 
continuous, and is hence a unitary representation of $G$ on $\inc{\aaaa}$.
\end{proposition}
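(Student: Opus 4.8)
The plan is to reduce the strong continuity of $\pmb{\pi}$ to continuity at the identity element $e$, tested only on elementary vectors. First I would note that, since each $\pmb{\pi}(g)$ is unitary and $\pmb{\pi}(gh)=\pmb{\pi}(g)\pmb{\pi}(h)$ with $\pmb{\pi}(e)=\mathrm{Id}$, one has $\|\pmb{\pi}(g)\xi-\pmb{\pi}(g_{0})\xi\|=\|\pmb{\pi}(g_{0}^{-1}g)\xi-\xi\|$ for every $\xi\in\inc{\aaaa}$ and $g_{0}\in G$; hence it is enough to prove that $g\mapsto\pmb{\pi}(g)\xi$ is continuous at $e$. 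Since $\sup_{g\in G}\|\pmb{\pi}(g)\|=1$ and the linear span of the elementary vectors is dense in $\inc{\aaaa}$, a routine $3\varepsilon$-argument then reduces the problem to the following claim: for every elementary vector $\xxx=\oxn$ (with $x_{n}=a_{n}$ for all $n$ larger than some $N$), $\|\pmb{\pi}(g)\xxx-\xxx\|\to 0$ as $g\to e$.

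Next I would compute, using $\pss{\pmb{\pi}(g)\xxx}{\xxx}=\prod_{\gn}\pss{\pi_{n}(g)x_{n}}{x_{n}}$ (an absolutely convergent product, since $\sum_{\gn}|1-\pss{\pi_{n}(g)a_{n}}{a_{n}}|<\infty$) and the unitarity of $\pmb{\pi}(g)$, that $\|\pmb{\pi}(g)\xxx-\xxx\|^{2}=2\|\xxx\|^{2}-2\,\mathrm{Re}\,\pss{\pmb{\pi}(g)\xxx}{\xxx}$; so everything comes down to showing $\pss{\pmb{\pi}(g)\xxx}{\xxx}\to\|\xxx\|^{2}$ as $g\to e$. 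Splitting the product into the finite head $\prod_{n\le N}\pss{\pi_{n}(g)x_{n}}{x_{n}}$ and the tail $\prod_{n>N}\pss{\pi_{n}(g)a_{n}}{a_{n}}$, the head tends to $\prod_{n\le N}\|x_{n}\|^{2}=\|\xxx\|^{2}$ as $g\to e$, simply because each individual $\pi_{n}$ is a strongly continuous unitary representation. So the real content is the convergence of the tail.

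The hard part — and the only place where the hypothesis on the function $g\mapsto\sum_{\gn}|1-\pss{\pi_{n}(g)a_{n}}{a_{n}}|$ enters — will be to show that $\prod_{n>N}\pss{\pi_{n}(g)a_{n}}{a_{n}}\to 1$ as $g\to e$; without the extra assumption the individual factors tend to $1$ but the infinite product need not. I would use the elementary estimate that $|1-\prod_{n=1}^{m}z_{n}|\le\sum_{n=1}^{m}|1-z_{n}|$ whenever $|z_{n}|\le 1$ for all $n$ (an immediate induction from $1-\prod_{1}^{m}z_{n}=(1-z_{m})+z_{m}(1-\prod_{1}^{m-1}z_{n})$), applied to $z_{n}=\pss{\pi_{n}(g)a_{n}}{a_{n}}$, which satisfy $|z_{n}|\le\|\pi_{n}(g)a_{n}\|\,\|a_{n}\|=1$. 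Letting $m\to\infty$ gives $|1-\prod_{n>N}\pss{\pi_{n}(g)a_{n}}{a_{n}}|\le\sum_{\gn}|1-\pss{\pi_{n}(g)a_{n}}{a_{n}}|$. By hypothesis the right-hand side is continuous in $g$ on a neighborhood of $e$, and it vanishes at $g=e$ since $\pss{\pi_{n}(e)a_{n}}{a_{n}}=\|a_{n}\|^{2}=1$ for every $n$; hence it tends to $0$ as $g\to e$, and therefore the tail product tends to $1$. Combining this with the convergence of the head yields $\pss{\pmb{\pi}(g)\xxx}{\xxx}\to\|\xxx\|^{2}$, so $\|\pmb{\pi}(g)\xxx-\xxx\|\to 0$; by the reductions of the first paragraph, $\pmb{\pi}$ is strongly continuous, and is thus a unitary representation of $G$ on $\inc{\aaaa}$. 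I do not expect any serious obstacle beyond the tail estimate; the remaining steps are standard.
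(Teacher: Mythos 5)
Your proposal is correct and follows essentially the same route as the paper: reduce to continuity at $e$ on elementary vectors by density and unitarity, then control $\|\pmb{\pi}(g)\xxx-\xxx\|^{2}=2(1-\mathrm{Re}\,\pss{\pmb{\pi}(g)\xxx}{\xxx})$ via the inequality $|1-\prod z_{n}|\le\sum|1-z_{n}|$ for $|z_{n}|\le 1$, with the finitely many modified factors handled by the strong continuity of each $\pi_{n}$ and the tail by the assumed continuity (and vanishing at $e$) of $g\mapsto\sum_{\gn}|1-\pss{\pi_{n}(g)a_{n}}{a_{n}}|$. The only cosmetic difference is that you split the product into head and tail before estimating, whereas the paper bounds the whole expression by a single sum and then splits the sum.
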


\begin{proof}[Proof of Proposition \ref{Proposition 3.2}]
Since the linear span of the elementary vectors is dense in $\inc{\aaaa}$, 
and the operators $\pmb{\pi }(g)$, $g\in G$, are unitary, it suffices to 
prove that the map 
${\smash{\xymatrix{g\ar@{|->}[r]&\pmb{\pi}(g)\xxx}}}$ is continuous at $e$ 
for every elementary vector $\xxx=\oti_{\gn}\xn$ of norm $1$ of 
$\inc{\aaaa}$. Let $N\ge 1$ be such that $\xn=\an$ for every 
$n>N$. We have for every $g\in G$:
\begin{align*}
 ||\pmb{\pi }(g)\xxx-\xxx||^{2}&=2(1-\textrm{Re}\pss{\pmb{\pi }(g)\xxx}{\xxx})=
 2\biggl(1-\prod_{\gn}\textrm{Re}\pss{\pi 
_{n}(g)\dfrac{\xn}{||\xn||}}{\dfrac{\xn}{||\xn||}}\biggr)
\intertext{since $||\xxx||=\ds\prod_{n\ge 1}{||\xn||}$=1. Thus}
||\pmb{\pi }(g)\xxx-\xxx||^{2}&\le 2\sum_{\gn}\,\Bigl| 1-\pss{\pi 
_{n}(g)\dfrac{\xn}{||\xn||}}{\dfrac{\xn}{||\xn||}}\Bigr|\\
&\le 2\sum_{n=1}^{N}\,\Bigl| 1-\pss{\pi 
_{n}(g)\dfrac{\xn}{||\xn||}}{\dfrac{\xn}{||\xn||}}\Bigr|
+2\sum_{\gn}|1-\pss{\pi _{n}(g)\an}{\an}|.
\end{align*}
If $\varepsilon $ is any positive number, it follows from the assumptions that
$||\pmb{\pi }(g)\xxx-\xxx||<\varepsilon$ if $g$ 
lies in a suitable neighborhood of $e$. This proves the continuity of the 
map 
$\smash{\xymatrix{g\ar@{|->}[r]&\pmb{\pi }(g)\xxx.}}$
\end{proof}

\par\smallskip 
We finish this appendix by giving a sufficient condition for an infinite 
tensor product representation on a space $\inc{\aaaa}$ to be weakly 
mixing: let, for each $\gn$, $\hn$ be a separable Hilbert space, $\an$ a 
vector of $\hn$ with $||\an||=1$, and 
$\pi _{n}$ a unitary representation of $G$ on $H_{n}$. 
We suppose that the assumptions of either Proposition \ref{Proposition 3.2.0} (when $G$ is locally compact) or Proposition \ref{Proposition 3.2} (in the general case) are 
satisfied, so that $\pmb{\pi} =\oti_{\gn}\pi _{n}$ is a unitary 
representation of $G$
on $\inc{\aaaa}$. Then
\begin{proposition}\label{Proposition 4.3}
 In the case where $\underline{\lim}_{\,n\to+\infty }m(|\pss{\pi 
_{n}(\,\centerdot\,)\an}{\an}|^{2})=0,$ the representation $\pmb{\pi} 
=\oti_{\gn}\pi 
_{n} $ is weakly mixing.
\end{proposition}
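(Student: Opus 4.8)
Set $\pmb{H}=\inc{\aaaa}$. The plan is to deduce the weak mixing of $\pmb{\pi}$ from Proposition~\ref{Proposition 4.1}: it suffices to show that $\pmb{\pi}\oti\ba{\pmb{\pi}}$, acting on the (ordinary) Hilbert space tensor product $\pmb{H}\oti\ba{\pmb{H}}$, has no non-zero $G$-invariant vector, i.e.\ that the orthogonal projection $P=P_{\pmb{\pi}\oti\ba{\pmb{\pi}}}$ onto the space of its invariant vectors is $0$. By the abstract ergodic theorem of Section~\ref{Section 3} applied to $\pmb{\pi}\oti\ba{\pmb{\pi}}$, one has for all $\xi,\eta,\xi',\eta'\in\pmb{H}$
\[
\pss{P(\xi\oti\ba{\eta})}{\xi'\oti\ba{\eta'}}=m\bigl(\pss{\pmb{\pi}(\centerdot)\xi}{\xi'}\,\ba{\pss{\pmb{\pi}(\centerdot)\eta}{\eta'}}\bigr).
\]
Since the elementary vectors form a total subset of $\pmb{H}$, the vectors $\xi\oti\ba{\eta}$ with $\xi,\eta$ elementary form a total subset of $\pmb{H}\oti\ba{\pmb{H}}$; as $P$ is bounded and self-adjoint, it is therefore enough to prove that the right-hand side above vanishes whenever $\xi,\eta,\xi',\eta'$ are elementary vectors of $\pmb{H}$.

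So I would fix elementary vectors $\xi=\oti_{\gn}\xi_{n}$, $\eta=\oti_{\gn}\eta_{n}$, $\xi'=\oti_{\gn}\xi'_{n}$, $\eta'=\oti_{\gn}\eta'_{n}$ and an integer $N$ with $\xi_{n}=\eta_{n}=\xi'_{n}=\eta'_{n}=\an$ for every $n>N$. Since $\pmb{\pi}(g)$ acts coordinatewise on decomposable vectors and the scalar product of two elementary vectors is the product of the coordinate scalar products, one has for every $g\in G$
\[
\pss{\pmb{\pi}(g)\xi}{\xi'}\,\ba{\pss{\pmb{\pi}(g)\eta}{\eta'}}=\prod_{\gn}\pss{\pi_{n}(g)\xi_{n}}{\xi'_{n}}\,\ba{\pss{\pi_{n}(g)\eta_{n}}{\eta'_{n}}},
\]
all the infinite products being convergent because $\sum_{\gn}|1-\pss{\pi_{n}(g)\an}{\an}|<\infty$ (the very condition making $\pmb{\pi}$ well defined). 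For $n>N$ the $n$-th factor equals $|\pss{\pi_{n}(g)\an}{\an}|^{2}\le 1$; hence, fixing any $k>N$, keeping only the $k$-th factor among those of index $>N$ and bounding the finitely many factors of index $\le N$ by the Cauchy--Schwarz inequality, I obtain the pointwise bound
\[
\bigl|\pss{\pmb{\pi}(g)\xi}{\xi'}\,\ba{\pss{\pmb{\pi}(g)\eta}{\eta'}}\bigr|\le C\,\bigl|\pss{\pi_{k}(g)\an}{\an}\bigr|^{2}\qquad\textrm{for every }g\in G,
\]
where $C=||\xi||\,||\eta||\,||\xi'||\,||\eta'||$.

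All the functions occurring above belong to \mbox{WAP$(G)$} (matrix coefficients of unitary representations are weakly almost periodic, and \mbox{WAP$(G)$} is stable under products and complex conjugation), so one may apply the invariant mean $m$. Since $m$ is positive, hence monotone, and $|m(f)|\le m(|f|)$, the pointwise bound gives
\[
\bigl|\pss{P(\xi\oti\ba{\eta})}{\xi'\oti\ba{\eta'}}\bigr|\le C\,m\bigl(|\pss{\pi_{k}(\centerdot)\an}{\an}|^{2}\bigr)\qquad\textrm{for every }k>N.
\]
Letting $k\to+\infty$ along a subsequence realizing $\underline{\lim}_{\,n\to+\infty}m\bigl(|\pss{\pi_{n}(\centerdot)\an}{\an}|^{2}\bigr)=0$, the left-hand side, which is independent of $k$, must be $0$. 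Thus $P$ vanishes on a total subset of $\pmb{H}\oti\ba{\pmb{H}}$, so $P=0$, and $\pmb{\pi}$ is weakly mixing by Proposition~\ref{Proposition 4.1}.

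I do not expect a genuine obstacle here: the argument is essentially a tensor-product computation combined with the monotonicity of the invariant mean. The only slightly delicate points are of a bookkeeping nature — checking the convergence of the various infinite products (which reduces to the convergence assumption already needed to define $\pmb{\pi}$) and checking that the functions fed to $m$ indeed lie in \mbox{WAP$(G)$}.
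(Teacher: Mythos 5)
Your proof is correct and follows essentially the same route as the paper's: both arguments bound the relevant product of matrix coefficients of $\pmb{\pi}$ on elementary vectors pointwise by a single factor $|\pss{\pi_{k}(\,\centerdot\,)a_{k}}{a_{k}}|^{2}$ of large index $k$ (using that each factor of the infinite product has modulus at most $1$ and that the coordinates equal $a_{n}$ for $n$ large), then apply the invariant mean and let $k\to\infty$ along a subsequence realizing the lower limit. The only cosmetic difference is that you conclude by showing the projection $P_{\pmb{\pi}\oti\ba{\pmb{\pi}}}$ vanishes on a total set, where the paper verifies $m(|\pss{\pmb{\pi}(\,\centerdot\,)\xxx}{\yyy}|^{2})=0$ for elementary $\xxx,\yyy$ and invokes a standard density argument.
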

\begin{proof}
 The proof of Proposition \ref{Proposition 4.3} relies on the same idea as 
that of Proposition \ref{Proposition 3.2}: let $\xxx=\oti_{\gn}\xn$ and 
$\yyy=\oti_{\gn}\yn$ be two elementary vectors in $\inc{\aaaa}$ with
$||\xxx||=||\yyy||=1$. We have 
\begin{align*}
 |\pss{\pmb{\pi} (g)\,\xxx}{\yyy}|^{2}&=\prod_{k\ge 1}\,\Bigl|\Bigl\langle 
\pi 
_{k}(g)\,\dfrac{x_{k}}{||x_{k}||},\dfrac{y_{k}}{||y_{k}||}\Bigr\rangle 
\Bigr|^{2}\le \Bigl|\Bigl\langle \pi 
_{n}(g)\,\dfrac{\xn}{||\xn||},\dfrac{y_{n}}{||y_{n}||}\Bigr\rangle 
\Bigr|^{2}
\intertext{for every $n\ge 1$ and every $g\in G$. But}
\Bigl|\Bigl\langle \pi 
_{n}(g)\,\dfrac{\xn}{||\xn||},\dfrac{\yn}{||\yn||}\Bigr\rangle 
\Bigr|
&\le|\pss{\pi _{n}(g)\an}{\an}|+\biggl| \biggl| 
\dfrac{\xn}{||\xn||}-\an\biggr| \biggr|+\biggl| \biggl|
\dfrac{\yn}{||\yn||}-\an\biggr| \biggr|.\\
\intertext{Squaring and taking the mean on both sides we obtain that}
m(|\pss{\pmb{\pi} (\,\centerdot\,)\xxx}{\yyy}|^{2})&\le 4\,m(|\pss{\pi_{n} 
(\,\centerdot\,)\,\an}{\an}|^{2})+4\,\biggl| \biggl| 
\dfrac{\xn}{||\xn||}-\an\biggr| \biggr|^{2}+4\,\biggl| \biggl|
\dfrac{\yn}{||\yn||}-\an\biggr| \biggr|^{2}
\end{align*}
for every $n\ge 1$. Since 
$\underline{\lim}_{\,n\to+\infty 
}m(|\pss{\pi 
_{n}(\,\centerdot\,)\an}{\an}|^{2})=0$ and the two other terms are equal to 
zero for $n$ sufficiently large, $m(|\pss{\pmb{\pi} 
(\,\centerdot\,)\xxx}{\yyy}|^{2})=0$. 
Weak mixing of $\pmb{\pi} $ now follows from standard density arguments.
\end{proof}

\end{document}